\author{Hiroshi Kihara}
\title{Quillen equivalences between the model categories of smooth spaces, simplicial sets, and arc-gengerated spaces}
\affil{Center for Mathematical Sciences, University of Aizu, 
 Tsuruga, Ikki-machi, Aizu-Wakamatsu City, Fukushima, 965-8580, Japan (kihara@u-aizu.ac.jp)}
\newtheoremstyle{mytheoremstyle} 
{1.25\topsep}                
{2.0\topsep}                 
{\it}
{}                           
{\bfseries}                  
{}                           
{1em}                        
{\thmname{#1}\ \thmnumber{#2}\  \thmnote{#3}\hspace{0pt}} 
\newtheorem{thm}{Theorem}[section]    
\newtheorem{lem}[thm]{Lemma}          
\newtheorem{prop}[thm]{Proposition}
\newtheorem{axiom}{Axiom}
\newtheorem{problem}[thm]{Problem}
\theoremstyle{definition}
\newtheorem{defn}[thm]{Definition}    
\newtheorem{rem}[thm]{Remark}             
\newtheorem{exa}[thm]{Example}
\theoremstyle{mytheoremstyle}
\newcommand{\bvec}[1]{\mbox{\boldmath $#1$}}
\newcommand*{\longhookrightarrow}{\ensuremath{\lhook\joinrel\relbar\joinrel\rightarrow}}
\newcommand*{\relrelbarsep}{.386ex}
\newcommand*{\relrelbar}{%
	\mathrel{%
		\mathpalette\@relrelbar\relrelbarsep
	}%
}
\newcommand*{\@relrelbar}[2]{%
	\raise#2\hbox to 0pt{$\m@th#1\relbar$\hss}%
	\lower#2\hbox{$\m@th#1\relbar$}%
}
\providecommand*{\rightrightarrowsfill@}{
	\arrowfill@\relrelbar\relrelbar\rightrightarrows
}
\providecommand*{\leftleftarrowsfill@}{
	\arrowfill@\leftleftarrows\relrelbar\relrelbar
}
\providecommand*{\xrightrightarrows}[2][]{
	\ext@arrow 0359\rightrightarrowsfill@{#1}{#2}%
}
\providecommand*{\xleftleftarrows}[2][]{
	\ext@arrow 3095\leftleftarrowsfill@{#1}{#2}%
}
\def\abb{{\mathbb{A}}}
\def\qbb{{\mathbb{Q}}}
\def\rbb{{\mathbb{R}}}
\def\zbb{{\mathbb{Z}}}
\def\acal{{\mathcal{A}}}
\def\bcal{{\mathcal{B}}}
\def\ccal{{\mathcal{C}}}
\def\dcal{{\mathcal{D}}}
\def\ecal{{\mathcal{E}}}
\def\ical{{\mathcal{I}}}
\def\jcal{{\mathcal{J}}}
\def\kcal{{\mathcal{K}}}
\def\mcal{{\mathcal{M}}}
\def\scal{{\mathcal{S}}}
\def\tcal{{\mathcal{T}}}
\def\ucal{{\mathcal{U}}}
\def\vcal{{\mathcal{V}}}
\def\wcal{{\mathcal{W}}}
\def\czero{{\mathcal{C}^0}}
\def\Hom{{\mathrm{Hom}}}
\def\colim{{\mathrm{colim}}}
\DeclareMathAlphabet{\mathpzc}{OT1}{pzc}{m}{it}
\DeclarePairedDelimiterX\set[1]\lbrace\rbrace{#1}
\def\coloneqq{\mathrel{\mathop:}=}
\numberwithin{equation}{section}
\begin{document}
 \maketitle
 
 \begin{abstract}
 In the preceding paper, we have constructed a compactly generated model structure on the category $\dcal$ of diffeological spaces together with the adjoint pairs $|\ |_\dcal : \scal \rightleftarrows \dcal : S^\dcal$ and $\tilde{\cdot} : \dcal \rightleftarrows \ccal^0 : R$, where $\scal$ and $\ccal^0$ denote the category of simplicial sets and that of arc-generated spaces, respectively. In this paper, we show that $(|\ |_\dcal, S^\dcal)$ and $(\tilde{\cdot}, R)$ are pairs of Quillen equivalences. Since our approach developed in the preceding paper applies to the category $\ccal h$ of Chen spaces as well, $\ccal h$ is also a compactly generated model category. We also show that the adjoint pair $\mathfrak{S}_\mathfrak{o} : \ccal h \rightleftarrows \dcal : \mathfrak{Ch}^\sharp$ introduced by Stacey is a pair of Quillen equivalences.
 \end{abstract}
 
\section{Introduction}
Let $\scal$ and $\ccal^0$ denote the category of simplicial sets and that of arc-generated spaces, respectively; recall that a topological space $X$ is called {\sl arc-generated} if the topology of $X$ is final for $C^0_X \coloneqq \text{\{continuous curves from } \rbb \text{ to } X \text{\}}$. In the preceding paper \cite{origin}, we have constructed a compactly generated model structure on the category $\dcal$ of diffeological spaces, introducing the adjoint pairs
$$
|\ |_\dcal : \scal \rightleftarrows \dcal : S^\dcal \ \text{and} \ \tilde{\cdot} : \dcal \rightleftarrows \ccal^0 : R,
$$
whose composite is just the adjoint pair
$$
|\ | : \scal \rightleftarrows \ccal^0 : S
$$
of the topological realization and singular functors (see Section 2).
\par\indent
In the following theorem, which is the main result of this paper, we establish the Quillen equivalences between the model categories $\scal$, $\dcal$, and $\ccal^0$ via $(|\ |_\dcal, S^\dcal)$ and $(\tilde{\cdot}, R)$. See \cite{MP, Hi} for model categories and Quillen equivalences, \cite{K, GJ, MP} for the model structure of $\scal$, and Remark \ref{arc} for the model structure of $\ccal^0$.
\begin{thm}\label{Quillenequiv}
	\begin{itemize}
		\item[(1)]
		$|\ |_{\dcal}: \scal \rightleftarrows \dcal: S^{\dcal}$ is a pair of Quillen equivalences.
		\item[(2)]
		$\tilde{\cdot}: \dcal \rightleftarrows \ccal^{0}: R$ is a pair of Quillen equivalences.
	\end{itemize}
\end{thm}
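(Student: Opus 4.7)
The plan is to exploit the factorization $(|\ |, S) = (\tilde{\cdot}\circ|\ |_\dcal,\, S^\dcal\circ R)$ of adjoint pairs together with the classical Quillen equivalence $(|\ |, S)\colon \scal \rightleftarrows \ccal^0$. From the preceding paper, both $(|\ |_\dcal, S^\dcal)$ and $(\tilde{\cdot}, R)$ are already known to be Quillen adjunctions, and the elementary 2-out-of-3 property for Quillen equivalences---if a composite of two Quillen adjunctions is a Quillen equivalence, then one factor is a Quillen equivalence if and only if the other is---reduces the theorem to establishing either (1) or (2); the remaining statement then follows formally.

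I would attack (1) directly. Every object of $\scal$ is cofibrant, so by the standard criterion $(|\ |_\dcal, S^\dcal)$ is a Quillen equivalence provided the unit $\eta^\dcal_X\colon X \to S^\dcal|X|_\dcal$ is a weak equivalence of simplicial sets for every $X \in \scal$. I would factor this unit as
\[
X \xrightarrow{\ \eta^\dcal_X\ } S^\dcal|X|_\dcal \xrightarrow{\ S^\dcal\eta^R_{|X|_\dcal}\ } S^\dcal R\,\tilde{|X|_\dcal} \;=\; S|X|,
\]
whose overall composite coincides with the classical unit $X\to S|X|$ and is therefore a weak equivalence by the Milnor--Quillen theorem. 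By 2-out-of-3 for weak equivalences it then suffices to show that the second arrow is a weak equivalence, i.e.\ that the natural map from the smooth singular complex of $|X|_\dcal$ to the continuous singular complex of $|X|$ is a weak equivalence of simplicial sets.

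This last comparison is the principal obstacle. My plan is a skeletal induction on $X$: the left adjoints $|\ |_\dcal$ and $\tilde{\cdot}$ preserve colimits, and the realizations $|\Delta^n|_\dcal$ are sufficiently small in the sense of the compactly generated model structure of the preceding paper that $S^\dcal$ commutes with the relevant pushouts along boundary inclusions and with the transfinite compositions used to build $|X|_\dcal$. This reduces the claim to the single case $X = \Delta^n$, in which the map $S^\dcal|\Delta^n|_\dcal \to S|\Delta^n|$ is the inclusion of smooth into continuous singular simplices on the standard smooth simplex---a classical smooth-approximation statement, provable by degreewise smoothing of continuous singular simplices relative to their boundaries and checking compatibility with faces and degeneracies up to simplicial homotopy. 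With (1) established, (2) is immediate from the 2-out-of-3 argument above.
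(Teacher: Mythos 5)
Your overall architecture agrees with the paper's at the outer level: deducing (2) from (1) via a two-out-of-three argument applied to the factorization $(|\ |,S)=(\tilde{\cdot}\circ|\ |_\dcal,\ S^\dcal\circ R)$ is exactly the paper's Lemma \ref{Quillenpairreduction}, and reducing (1) to the statement that the unit $i_K\colon K\to S^\dcal|K|_\dcal$ is a weak equivalence (equivalently, after composing with the classical unit, that the inclusion $S^\dcal|K|_\dcal\hookrightarrow S|K|$ is a weak equivalence) is the paper's Lemma \ref{1streduction}. The gap is in your final step. The singular functor $S^\dcal$ does \emph{not} commute with pushouts along the boundary inclusions $\dot{\Delta}^n\hookrightarrow\Delta^n$: a smooth singular simplex $\Delta^p\to X^{n-1}\cup_{\coprod\dot{\Delta}^n}\coprod\Delta^n$ need not factor through either leg of the pushout. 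Smallness of $\Delta^p$ only controls maps into the transfinite (sequential) composites; it says nothing about pushouts, and this failure is identical to the failure of the topological singular functor to commute with cell attachments. It is precisely the reason the classical comparison requires either excision via barycentric subdivision at the level of homology, or Quillen's theorem that the realization of a Kan fibration is a Serre fibration. Your skeletal induction therefore never reduces the problem to $K=\Delta[n]$; and that base case, while true, is not where the difficulty lies, since $\Delta^n$ is smoothly contractible and both singular complexes are contractible Kan complexes there.

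For comparison, the paper follows May's proof of the classical theorem: it develops a singular homology theory for diffeological spaces (excision proved by checking that the barycentric subdivision operator and the prism chain homotopy restrict from $\zbb S\widetilde{X}$ to $\zbb S^\dcal X$, using the affine smoothness axioms for $\Delta^p$), computes $H_\ast(|K|_\dcal)$ by cellular homology of the CW-structure on $|K|_\dcal$ to get $H_\ast(K)\cong H_\ast(|K|_\dcal)$, reduces to Kan complexes with one vertex via minimal complexes, passes to the $1$-connected case via the universal covering (showing $|\varpi|_\dcal$ is a covering in $\dcal$), and concludes with the Whitehead theorem. The paper also notes that the Goerss--Jardine route is unavailable because $|\ |_\dcal$ does not preserve finite products, so some version of this homological machinery appears unavoidable; your proposal is missing all of it.
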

Theorem 1.3 in \cite{origin}, which is recorded as Theorem \ref{originmain} in this paper, and Theorem \ref{Quillenequiv} are the central organizational theorems in smooth homotopy theory. For the proof of Theorem \ref{Quillenequiv}, we develope the theory of singular homology for diffeological spaces, which is naturally isomorphic to Hector's singular homology \cite{H} and Iglesias-Zemmour's cubic homology \cite[6.61]{I}. 
\par\indent
Since every diffeological space is fibrant, the class $\wcal_\dcal$ of diffeological spaces having the $\dcal$-homotopy type of a cofibrant object is important from the homotopical point of view. We thus study diffeological spaces in $\wcal_\dcal$ and ones not in $\wcal_\dcal$ using Theorem \ref{Quillenequiv}. Our study sheds new light on Christensen-Wu's comparison of the smooth and topological fundemental groups (\cite[Examples 3.12 and 3.20]{CW}).
\par\indent
Though Theorem \ref{Quillenequiv} shows that the Quillen homotopy categories of $\dcal$ and $\ccal^0$ are equivalent via the derived functors of $\tilde{\cdot}$ and $R$, it does not ensure that $S^\dcal \dcal (X,Y)$ is homotopy equivalent to $S\ccal^0 (\tilde{X}, \tilde{Y})$ for $X,Y \in \dcal$. For important diffeological spaces such as infinite dimensional $C^\infty$-manifolds, to find a sufficient condition under which these two complexes are homotopy equivalent is a much more subtle problem which is closely related to geometry. We will address this problem in the succeeding paper. The notions introduced to study diffeological spaces having the $\dcal$-homotopy type of a cofibrant object are relevant to this problem.
\par\indent
Next, we turn to Chen spaces (\cite{BH}). Since our construction of a model structure (\cite{origin}) applies to the category $\ccal h$ of Chen spaces as well (Theorem 6.3), we have a result analogous to Theorem \ref{Quillenequiv} (Theorem \ref{analogue}). Further, we establish the direct Quillen equivalence between $\ccal h$ and $\dcal$ via the adjoint pair $\mathfrak{S}_\mathfrak{o} : \ccal h \rightleftarrows \dcal : \mathfrak{Ch}^\sharp$ introduced by Stacey \cite{St}.
\begin{thm}\label{ChD}
	$\mathfrak{S}_\mathfrak{o} : \ccal h \rightleftarrows \dcal : \mathfrak{Ch}^\sharp$ is a pair of Quillen equivalences.
\end{thm}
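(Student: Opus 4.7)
The plan is to exploit the 2-out-of-3 property for Quillen equivalences, using Theorem \ref{Quillenequiv}(1) and its Chen-space analogue (Theorem \ref{analogue}). Writing $|\ |_{\ccal h} : \scal \rightleftarrows \ccal h : S^{\ccal h}$ for the Chen-space analogue of the simplicial–diffeological realization/singular adjunction, the goal is to establish the natural isomorphism $\mathfrak{S}_\mathfrak{o} \circ |\ |_{\ccal h} \cong |\ |_\dcal$ of left adjoints $\scal \to \dcal$; once this is in place, the theorem follows purely formally.

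First I would verify that $(\mathfrak{S}_\mathfrak{o}, \mathfrak{Ch}^\sharp)$ is a Quillen pair. Since the model structure on $\ccal h$ is compactly generated with generating (trivial) cofibrations of the form $|\partial\Delta^n|_{\ccal h} \hookrightarrow |\Delta^n|_{\ccal h}$ and $|\Lambda^n_k|_{\ccal h} \hookrightarrow |\Delta^n|_{\ccal h}$, it suffices to show that $\mathfrak{S}_\mathfrak{o}$ carries these to (trivial) cofibrations in $\dcal$. This in turn reduces, via naturality and the colimit-preservation of the left adjoints involved, to the simplex-wise identification $\mathfrak{S}_\mathfrak{o}(|\Delta^n|_{\ccal h}) \cong |\Delta^n|_\dcal$ as a natural isomorphism of cosimplicial objects, which should follow directly from Stacey's explicit construction of $\mathfrak{S}_\mathfrak{o}$ together with the compatible definitions of the standard simplex in $\ccal h$ and $\dcal$.

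With this identification in hand, the full natural isomorphism $\mathfrak{S}_\mathfrak{o} \circ |\ |_{\ccal h} \cong |\ |_\dcal$ follows by writing any simplicial set $K$ as the colimit of its category of simplices and using that both $\mathfrak{S}_\mathfrak{o}$ and $|\ |_{\ccal h}$ preserve colimits. Consequently $S^{\ccal h} \circ \mathfrak{Ch}^\sharp \cong S^\dcal$ by uniqueness of right adjoints, and both $|\ |_\dcal \dashv S^\dcal$ (Theorem \ref{Quillenequiv}(1)) and $|\ |_{\ccal h} \dashv S^{\ccal h}$ (Theorem \ref{analogue}) are Quillen equivalences whose composite is the Quillen pair $(\mathfrak{S}_\mathfrak{o} \circ |\ |_{\ccal h},\, S^{\ccal h} \circ \mathfrak{Ch}^\sharp)$. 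The 2-out-of-3 property for Quillen equivalences then forces $(\mathfrak{S}_\mathfrak{o}, \mathfrak{Ch}^\sharp)$ to be a Quillen equivalence.

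The main obstacle is the simplex-by-simplex identification $\mathfrak{S}_\mathfrak{o}(|\Delta^n|_{\ccal h}) \cong |\Delta^n|_\dcal$: while the affine simplex carries canonical and manifestly compatible Chen and diffeological structures, so that the identification is morally tautological, it must be carried out carefully within Stacey's framework, because any delicate point in the precise definitions of $|\Delta^n|_{\ccal h}$, $|\Delta^n|_\dcal$, or $\mathfrak{S}_\mathfrak{o}$ enters at exactly this step. Everything else in the argument is a formal consequence of the 2-out-of-3 machinery.
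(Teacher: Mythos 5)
Your strategy coincides with the paper's: identify $\mathfrak{S}_\mathfrak{o}\circ|\ |_{\ccal h}$ with $|\ |_\dcal$, check that $(\mathfrak{S}_\mathfrak{o},\mathfrak{Ch}^\sharp)$ is a Quillen pair via the simplex-level identification, and conclude by two-out-of-three for Quillen equivalences (the paper's Lemma \ref{Quillenpairreduction}, whose hypotheses $\scal_c=\scal$ and $(\ccal h)_f=\ccal h$ are satisfied). The one place where your account of the key step is off is the claim that $\mathfrak{S}_\mathfrak{o}\Delta^p_{\ccal h}\cong\Delta^p_\dcal$ is ``morally tautological'' because the affine simplex carries manifestly compatible structures: the standard simplices here are \emph{not} the affine sub-structure simplices (those fail Axioms 3 and 4 for $p\geq 2$), but are defined inductively as final structures for the maps $\varphi_i$. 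Consequently one must know that $\mathfrak{S}_\mathfrak{o}$ preserves final structures as well as initial ones; the paper secures this by constructing a left adjoint $\mathfrak{Ch}^\flat$ to $\mathfrak{S}_\mathfrak{o}$, so that $\mathfrak{S}_\mathfrak{o}$, having adjoints on both sides compatible with the underlying set functors, preserves initial and final structures and all limits and colimits (Lemma \ref{deltapreserve}). With that supplied, the rest of your argument goes through exactly as in the paper (the paper checks the Quillen pair condition on the right adjoint via lifting problems, while you check it on the left adjoint via generating cofibrations; these are equivalent here since $\mathfrak{S}_\mathfrak{o}$ carries the generating sets of $\ccal h$ onto those of $\dcal$).
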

In Section 2, we recall the basic notions and results from \cite{origin}. In Section 3, we develope the singular homology theory for diffeological spaces. In Section 4, we prove Theorem \ref{Quillenequiv} using the results of Section 3 (see Remark \ref{proof}). In Section 5, we discuss diffeological spaces having the $\dcal$-homotopy type of a cofibrant object. In Section 6, we endow the category $\ccal h$ with a model structure and prove Theorem \ref{ChD}.

\section{Compactly generated model structure on the category $\dcal$}
We make a review on the construction of a compactly generated model structure on the category $\dcal$. Then, we observe that the adjoint pairs $|\ |_\dcal : \scal \rightleftarrows \dcal : S^\dcal$ and $\tilde{\cdot} : \dcal \rightleftarrows \ccal^0 : R$ are Quillen pairs.
\par\indent
Let us begin with the definition of a diffeological space. A {\sl parametrization} of a set $X$ is a (set-theoretic) map $p: U \longrightarrow X$, where $U$ is an open subset of $\rbb^{n}$ for some $n$.
\begin{defn}\label{diffeological}
	\begin{itemize}
		\item[(1)] A {\sl diffeological space} is a set $X$ together with a specified set $D_X$ of parametrizations of $X$ satisfying the following conditions:
		\begin{itemize}
			\item[(i)](Covering)  Every constant parametrization $p:U\longrightarrow X$ is in $D_X$.
			\item[(ii)](Locality) Let $p :U\longrightarrow X$ be a parametrization such that there exists an open cover $\{U_i\}$ of $U$ satisfying $p|_{U_i}\in D_X$. Then, $p$ is in $D_X$.
			\item[(iii)](Smooth compatibility) Let $p:U\longrightarrow X$ be in $D_X$. Then, for every $n \geq 0$, every open set $V$ of $\rbb^{n}$ and every smooth map $F  :V\longrightarrow U$, $p\circ F$ is in $D_X$.
		\end{itemize}
		The set $D_X$ is called the {\sl diffeology} of $X$, and its elements are called {\sl plots}.
		\item[(2)] Let $X=(X,D_X)$ and $Y=(Y,D_Y)$ be diffeological spaces, and let $f  :X\longrightarrow Y$ be a (set-theoretic) map. We say that $f$ is {\sl smooth} if for any $p\in D_X$, \ $f\circ p\in D_Y$. Then, diffeological spaces and smooth maps form the category $\dcal$.
	\end{itemize}
\end{defn}
The category $\dcal$ of diffeological spaces has the obvious underlying set functor. See \cite[pp. 230-233]{FK} for initial and final structures, and initial and final morphisms with respect to the underlying set functor.
\par\indent
The convenient properties of $\dcal$ are summarized as the following proposition.
\begin{prop}\label{conven}
	\begin{itemize}
		\item[(1)] The category ${\dcal}$ has initial and final structures with respect to the underlying set functor. In particular, ${\dcal}$ is complete and cocomplete. Further, the class of $\dcal$-embeddings (i.e., injective initial morphisms) is closed under pushouts and transfinite composites.
		\item[(2)] The category $\mathcal{D}$ is cartesian closed.
		\item[(3)] The underlying set functor $\dcal \longrightarrow Set$
		is factored as the underlying topological space functor 
		$\widetilde{\cdot}:\dcal \longrightarrow \czero$
		followed by the underlying set functor
		$\czero \longrightarrow Set$, where $\czero$ denotes the category of arc-generated spaces.
		Further, the functor
		$\widetilde{\cdot}:\dcal \longrightarrow \czero$
		has a right adjoint
		$R:\czero \longrightarrow \dcal$.
	\end{itemize}
	\begin{proof}
		See \cite[p. 90]{CSW}, \cite[pp. 35-36]{I}, and \cite[Propositions 2.1 and 2.13]{origin}.
	\end{proof}
\end{prop}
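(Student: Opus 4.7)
The plan is to treat the three parts in sequence, building the whole structure from the explicit descriptions of initial and final diffeologies.

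For part (1), I would construct the initial and final diffeologies directly. Given a source $(f_i : X \to Y_i)_{i \in I}$ of set-theoretic maps from a set $X$, the initial diffeology is defined as the set of parametrizations $p : U \to X$ such that $f_i \circ p \in D_{Y_i}$ for every $i$; the three axioms of Definition \ref{diffeological} transfer directly through each $f_i$. Given a sink $(g_i : Y_i \to X)_{i \in I}$, the final diffeology consists of parametrizations $p : U \to X$ such that every point of $U$ has an open neighborhood $V$ on which either $p$ is constant or $p|_V = g_i \circ q$ for some plot $q$ of some $Y_i$; here the locality axiom (ii) is built in by construction, axiom (i) is immediate, and axiom (iii) uses smooth compatibility in each $Y_i$. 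Completeness and cocompleteness then follow by the general principle that a concrete category admitting all initial (resp. final) structures over $Set$ inherits all limits (resp. colimits) from $Set$. For the closure of $\dcal$-embeddings under pushouts, I would use the explicit description of the pushout diffeology as final and verify the initiality condition by reducing plots of the pushout to plots of the summands via the locality axiom; transfinite composites are handled analogously.

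For part (2), the internal hom $Y^X$ is carried by the set $\dcal(X,Y)$ with the \emph{functional diffeology}: a parametrization $p : U \to \dcal(X,Y)$ is declared a plot precisely when its adjoint $U \times X \to Y$, $(u,x) \mapsto p(u)(x)$, is smooth. Axioms (i)--(iii) are verified directly using the product diffeology on $U \times X$. The exponential adjunction $\dcal(X \times Y, Z) \cong \dcal(X, Z^Y)$ is then established by transposing plots: smoothness of $f : X \times Y \to Z$ tested on products of plots corresponds exactly to smoothness of the adjoint map into the functional space, since a plot of $X \times Y$ is locally a product of plots.

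For part (3), I would endow the underlying set of $X$ with the D-topology, declaring $U \subseteq X$ open iff $p^{-1}(U)$ is open in the domain of every plot $p \in D_X$. Smooth maps automatically become continuous, and the resulting topology is arc-generated because the smooth compatibility axiom reduces testing openness along arbitrary plots to testing along smooth curves $\rbb \to X$, which are a fortiori continuous arcs. This yields the required factorization through $\ccal^0$. The right adjoint $R : \ccal^0 \to \dcal$ sends an arc-generated space $Z$ to the set $Z$ equipped with the \emph{continuous diffeology}, whose plots are the continuous parametrizations $U \to Z$; the axioms are immediate from the fact that the continuous parametrizations are closed under restriction, local gluing, and precomposition with smooth (hence continuous) maps. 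The adjunction $\dcal(X, R(Z)) \cong \ccal^0(\widetilde{X}, Z)$ is verified by noting that a set map $X \to Z$ is smooth iff its composite with every plot of $X$ is continuous, iff it is continuous with respect to the D-topology, using that the D-topology is arc-generated.

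The main obstacle is the closure assertion in (1): unlike the analogous result in $\ccal^0$, the pushout diffeology is defined as a final diffeology, so checking that pushing out a $\dcal$-embedding preserves initiality requires a careful local analysis of how plots of the pushout factor through the diagram, together with injectivity bookkeeping that ultimately rests on the locality axiom. The corresponding statement for transfinite composites then follows by a filtered-colimit argument on plots, whose domains are compact enough (open subsets of $\rbb^n$) that plots of the colimit factor through a bounded stage of the filtration.
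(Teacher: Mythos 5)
Your overall route is the standard one and is exactly what the paper's one-line proof delegates to its references: explicit initial diffeologies (pullback of plots) and final diffeologies (the generated diffeology, whose plots locally are constant or factor as $g_i\circ q$) for part (1), the functional diffeology for part (2), and the D-topology together with the continuous diffeology for part (3). These constructions are correct, the exponential law does reduce to testing on products of plots via precomposition with the diagonal, and you rightly single out the pushout/transfinite-composite stability of $\dcal$-embeddings as the one place requiring real work --- the paper outsources precisely that to \cite[Proposition 2.1]{origin}, and your strategy (factor plots of the finally-structured object locally, then invoke the locality axiom) is the correct one, though you leave the case analysis unexecuted. One small slip there: open subsets of $\rbb^n$ are not compact, so "compact enough" is not the reason a plot of a transfinite composite factors through a bounded stage; the correct reason is that the generated (final) diffeology already consists of parametrizations that \emph{locally} factor through a single stage, after which locality finishes the argument.

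The genuine gap is your justification that the D-topology is arc-generated. You claim the smooth compatibility axiom "reduces testing openness along arbitrary plots to testing along smooth curves," but smooth compatibility only yields the trivial half: if $\gamma:\rbb\to U$ is a smooth curve into the domain of a plot $p$, then $p\circ\gamma$ is a smooth curve in $X$, so every D-open set has open preimage under smooth curves. The substantive half is the converse: if $A'\subseteq U\subseteq\rbb^n$ has open preimage under \emph{every} smooth curve into $U$, then $A'$ is open in $U$. This is the theorem of Christensen--Sinnamon--Wu that the paper cites (\cite[p. 90]{CSW}), and its proof is not formal: given a non-interior point $x$ of $A'$, one must choose a sequence $x_n\to x$ with $x_n\notin A'$ converging fast enough that a single smooth curve passes through $x$ and all the $x_n$ (the special curve lemma of \cite{KM}), and then derive a contradiction with curve-openness. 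Without this input your factorization of the underlying set functor lands only in $\tcal$, not in $\czero$, and the adjunction with $R$ as you set it up would have to be re-routed through arbitrary topological spaces. Everything else in your part (3), including the verification of the adjunction $\dcal(X,RZ)\cong\czero(\widetilde{X},Z)$ by composing with plots, is fine once this point is supplied.
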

The notions of diffeological subspace and quotient diffeological space are defined in the standard manner (\cite[Definition 2.2]{origin}). A $\dcal$-quotient map is defined to be a surjective final morphism.
\par\indent
For Part 3 of Proposition \ref{conven}, recall that the underlying topological space $\tilde{A}$ of a diffeological space $A = (A, D_A)$ is defined to be the set $A$ endowed with the final topology for $D_A$ and that $R$ assigns to an arc-generated space $X$ to the set $X$ endowed with the diffeology
$$
D_{RX} = \text{\{continuous parametrizations of } X \text{\}.}
$$
It is obvious that $\tilde{\cdot} \circ R = Id_{\ccal^0}$.
\par\indent
The principal part of our construction of a model structure on $\dcal$ is the construction of good diffeologies on the sets
$$
\Delta^p=\{(x_0,\ldots,x_p)\in\mathbb{R}^{p+1} \ |\ \underset{i}{\sum} x_i = 1,\ x_i\geq 0 \}\ \ \ (p\geq 0)
$$
which enables us to define weak equivalences, fibrations, and cofibrations and to verify the model axioms (see Definition \ref{WFC} and Theorem \ref{originmain}). The required properties of the diffeologies on $\Delta^{p} \ (p \geq 0)$ are expressed in the following four axioms:
\begin{axiom}
	The underlying topological space of $\Delta^p$ is the topological standard $p$-simplex for $p\geq 0$.
\end{axiom}
Recall that
$f:\Delta^p \longrightarrow \Delta^q$
is an {\sl affine map} if $f$ preserves convex combinations.
\begin{axiom}
	Any affine map $f:\Delta^p\longrightarrow \Delta^q$ is smooth.
\end{axiom}
Let $\scal$ denote the category of simplicial sets. For $K \in \scal$, the {\sl simplex category} $\Delta\downarrow K$ is defined to be the full subcategory of the overcategory $\scal \downarrow K$ consisting of maps $\sigma : \Delta[n] \rightarrow K$. Consider the diagram $\Delta\downarrow K \longrightarrow \mathcal{S}$ sending $\sigma :\Delta[n] \longrightarrow K$ to $\Delta[n]$, and note that the colimit $\underset{\mathrm{\Delta\downarrow} K}{\mathrm{colim}} \ \Delta[n]$ is naturally isomorphic to $K$ (\cite[p. 7]{GJ}). Using Axiom 2, we can define the {\sl realization functor}
$$
|\ |_{\dcal}: \mathcal{S}\longrightarrow \mathcal{D}
$$
by $|K|_{\mathcal{D}}= \underset{\mathrm{\Delta\downarrow} K}{\mathrm{colim}} \ \Delta^n$.
\begin{axiom}
	Let $K$ be a subcomplex of $\Delta[p]$. Then the canonical smooth injection
	$$\left| K \right|_{\dcal} \longhookrightarrow \Delta^p$$
	is a $\dcal$-embedding.
\end{axiom}
The notion of a deformation retract in $\dcal$ is defined in the same manner as in the category of topological spaces by using the unit interval $I=[0,\ 1]$ endowed with a diffeology via the canonical bijection with $\Delta^{1}$ (\cite[Section 2.4]{origin}).
The {\sl $k^{th}$ horn} of $\Delta^p$ is a diffeological subspace of $\Delta^p$ defined by
\begin{eqnarray*}
	\Lambda^p_k &=&  \{(x_0,\ldots,x_p)\in\Delta^p \ |\ x_i=0 \hbox{ for some }i\neq k\}.
\end{eqnarray*}
\begin{axiom}
	The $k^{th}$ horn $\Lambda^p_k$ is a deformation retract of $\Delta^p$ in $\mathcal{D}$ for $p \geq 1$ and $0 \leq k \leq p$.
\end{axiom}
For a subset $A$ of the affine $p$-space $\abb^{p} = \{(x_0, \ldots, x_p) \in \rbb^{p+1} \ | \ \sum x_i = 1 \}$, $A_{\mathrm{sub}}$ denotes the set $A$ endowed with the sub-diffeology of $\abb^{p} \ (\cong \rbb^{p})$.
The diffeological space $\Delta^{p}_{\mathrm{sub}}$, used in \cite{H} to study diffeological spaces by homotopical means, satisfies neither Axiom 3 nor 4 for $p \geq 2$ (\cite[Proposition A.2]{origin}). Thus, we must construct a new diffeology on $\Delta^p$, at least for $p \geq 2$. Let us introduce such diffeologies on $\Delta^p$.
\par\indent
Let $(i)$ denote the vertex $(0, \ldots, \underset{(i)}{1}, \ldots, 0)$ of $\Delta^p$, and let $d^i$ denote the affine map from $\Delta^{p-1}$ to $\Delta^p$, defined by
\begin{eqnarray*}
	d^i((k))= \left \{
	\begin{array}{ll}
		(k) & \text{for} \ k<i,\\
		(k+1)& \text{for} \ k\geq i.
	\end{array}
	\right.
\end{eqnarray*} 

\begin{defn}\label{simplices}
	We define the {\sl standard $p$-simplices} $\Delta^p$ ($p\geq 0$) inductively. Set $\Delta^p=\Delta_{\mathrm{sub}}^p$ for $p\leq 1$. Suppose that the diffeologies on $\Delta^k$ ($k<p$) are defined. 
	We define the map 
	\begin{eqnarray*}
		\varphi_i: \Delta^{p-1}\times [0,1) & \longrightarrow & \Delta^p
	\end{eqnarray*}
	by $\varphi_{i}(x, t) = (1-t)(i)+td^{i}(x)$, and endow $\Delta^p$ with the final structure for the maps $\varphi_{0}, \ldots, \varphi_{p}$.
\end{defn}

\begin{prop}\label{axioms}
	The standard $p$-simplices $\Delta^p (p \geq 0)$ in Definition \ref{simplices} satisfies Axioms 1-4.
\end{prop}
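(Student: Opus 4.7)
The plan is to verify Axioms 1--4 by induction on $p$, taking them in order with each later axiom drawing on the earlier ones. The base case $p \leq 1$ is immediate: $\Delta^p = \Delta^p_{\mathrm{sub}}$ carries the Euclidean structure, for which all four axioms are classical. Throughout, assume $\Delta^k$ satisfies Axioms 1--4 for all $k < p$.

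\textbf{Axiom 1.} Because the underlying-space functor $\widetilde{\cdot}:\dcal \to \ccal^0$ is a left adjoint (Proposition \ref{conven}(3)), it preserves final structures, so the topology of $\Delta^p$ is the final topology on the set $\Delta^p$ with respect to the $\varphi_i$'s. Each $\varphi_i : \Delta^{p-1}\times[0,1) \to \Delta^p$ is the usual cone construction with image exactly the open subset $V_i = \{x\in\Delta^p \mid x_i > 0\}$; restricted to $V_i$ it is a topological quotient, identifying $\Delta^{p-1}\times\{0\}$ to the apex $(i)$. Since $\{V_i\}_{i=0}^p$ is an open cover of the standard topological simplex and the $V_i$-subspace topology agrees with the quotient topology from $\varphi_i$, the final topology on $\Delta^p$ coincides with the standard one.

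\textbf{Axiom 2.} For an affine $f : \Delta^p \to \Delta^q$, the definition of the final diffeology reduces smoothness of $f$ to showing each composite $f \circ \varphi_i : \Delta^{p-1}\times[0,1) \to \Delta^q$ is a plot of $\Delta^q$. Affineness gives
$$f\circ\varphi_i(y,t) = (1-t)v + tg(y), \qquad v := f((i)) \in \Delta^q,\quad g := f\circ d^i : \Delta^{p-1}\to\Delta^q,$$
where $g$ is affine, hence smooth by the inductive hypothesis. I expect this step to be the main obstacle: the cone-from-$v$ formula is not literally one of the structural maps $\varphi_j$ of $\Delta^q$, and even when $v = (j)$ is a vertex the natural inverse of $\varphi_j$ has a denominator $1-g_j(y)$ that vanishes on the face $g^{-1}((j))$. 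I would handle this by a secondary induction using the cone charts $\varphi_k$ of $\Delta^{p-1}$: in a chart where $g((k)) = (j)$ the composite again manifestly exhibits cone-from-$(j)$ structure with a lower-dimensional base $g\circ d^k$, while in a chart where $g((k)) \neq (j)$ the inverse formula is smooth near that apex. Patching along the open cover supplied by Axiom 1 yields the required plot of $\Delta^q$.

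\textbf{Axiom 3.} Given a subcomplex $K \subseteq \Delta[p]$, smoothness of the canonical injection $|K|_\dcal \longhookrightarrow \Delta^p$ follows from Axiom 2 applied face-by-face, so the sub-diffeology on $|K|$ is coarser than the realization diffeology. Conversely, given a parametrization $p : U \to \Delta^p$ that is a plot of $\Delta^p$ and factors through $|K|$, I would pull back along each cone chart $\varphi_i$ and apply the inductive form of Axiom 3 to $\varphi_i^{-1}(K) \subseteq \Delta[p-1]\times\Delta[1]$ (after the obvious identification) to factor the pulled-back plot through $|\varphi_i^{-1}(K)|_\dcal\times[0,1)$; these factorizations glue along the cover $\{V_i\}$ to exhibit $p$ as a plot of $|K|_\dcal$.

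\textbf{Axiom 4.} I would construct an explicit deformation retraction of $\Delta^p$ onto $\Lambda^p_k$ by identifying $\Delta^p$ with the join $(k) \ast d^k(\Delta^{p-1})$ so that $\Lambda^p_k = (k) \ast \partial d^k(\Delta^{p-1})$; the retraction projects interior points of the base $d^k(\Delta^{p-1})$ radially onto its boundary, extended along the join rays towards the apex $(k)$, and the straight-line formula $H(x,s) = (1-s)x + s\,r(x)$ is the associated deformation. That $r$ and $H$ are smooth in $\dcal$ follows by checking plots chart-by-chart through the $\varphi_i$'s and invoking Axioms 2 and 3 already established for $\Delta^{p-1}$; the relevant rational expressions for $r$ in barycentric coordinates are nonsingular on each chart's interior. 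The only obstacle is again traceable to the cone-from-an-interior-point smoothness argument used for Axiom 2; once that is in hand, the remaining axioms reduce to careful cone-chart bookkeeping.
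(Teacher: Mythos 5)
The paper's own ``proof'' of this proposition is a citation: each axiom is established in \cite{origin} (Propositions 3.2, 5.1, 7.1, and 8.1 there), and each of those occupies a substantial portion of that paper. Your overall architecture --- induction on $p$, with every verification pushed through the cone charts $\varphi_i$ --- does match the strategy of \cite{origin}. The problem is that the two places where you defer (``I expect this step to be the main obstacle,'' ``cone-chart bookkeeping'') are precisely where the content lives, and your proposed resolutions do not close. For Axiom 2, the composite $f\circ\varphi_i(y,t)=(1-t)v+tg(y)$ is a cone from an \emph{arbitrary} point $v\in\Delta^q$, so it is not one of the structural maps of $\Delta^q$; to conclude that it is a plot one essentially needs smoothness of the convex-combination map $\Delta^q\times\Delta^q\times[0,1]\to\Delta^q$, which is a statement at least as strong as Axiom 2 itself. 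Your secondary induction over the charts of $\Delta^{p-1}$ does not visibly break this circularity: in the chart where $g((k))\neq(j)$ you appeal to smoothness of the inverse formula ``near that apex,'' but diffeological smoothness must be tested against all plots of $\Delta^{p-1}\times[0,1)$, which can approach the degeneracy locus of the denominator $1-h_j$ in ways not controlled by nonsingularity on an open stratum.

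The same objection applies, more sharply, to Axiom 4. The paper records that $\Delta^p_{\mathrm{sub}}$ satisfies Axioms 1 and 2 but \emph{fails} Axioms 3 and 4 for $p\geq 2$, so the existence of a topologically obvious straight-line retraction onto $\Lambda^p_k$ is not the issue; the issue is whether $r$ and $H$ are smooth for the final-structure diffeology, and ``the rational expressions are nonsingular on each chart's interior'' is not a proof of that --- the sub-diffeology counterexample shows exactly that naive barycentric formulas can fail to be smooth homotopies. This is why \cite{origin} devotes a separate, careful construction to the retraction. There is also a smaller gap in your Axiom 1: $\widetilde{\cdot}$ preserves final structures, but it does not preserve products in general, so you must separately justify that the underlying space of $\Delta^{p-1}\times[0,1)$ in $\dcal$ carries the product topology (true here because $[0,1)$ is locally compact, but it needs to be said before the quotient-map argument can run). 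In short, the skeleton is right, but Axioms 2 and 4 are asserted rather than proved.
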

\begin{proof}
	See \cite[Propositions 3.2, 5.1, 7.1, and 8.1]{origin}.
\end{proof}

Without explicit mention, the symbol $\Delta^p$ denotes the standard $p$-simplex defined in Definition \ref{simplices} and a subset of $\Delta^p$ is endowed with the sub-diffeology of $\Delta^p$; see \cite[Lemmas 3.1 and 4.2, and Remark A.4]{origin} for a comparison of the diffeologies of $\Delta^{p}$ and $\Delta^{p}_{\mathrm{sub}}$.
\par\indent
Since the diffeology of $\Delta^p$ is the sub-diffeology of $\abb^{p}$ for $p \leq 1$, our notion of a deformation retract in $\dcal$ coincides with the ordinary notion of a deformation retract in the theory of diffeological spaces (\cite[p. 110]{I} and \cite[Remark 2.20]{origin}).
\par\indent

By Axiom 2, we can define the singular complex $S^{\dcal}X$ of a diffeological space $X$ to have smooth maps $\sigma : \Delta^p \longrightarrow X$ as $p$-simplices, thereby defining the {\sl singular functor} $S^{\mathcal{D}} :\mathcal{D} \longrightarrow \mathcal{S}$.
\par\indent
Recall the adjoint pair
$$
|\ | : \scal \rightleftarrows \ccal^0 : S
$$
of the topological realization functor $|\ |$ and the topological singular functor $S$ (\cite[Section 9.1]{origin}). As usual, $\dot{\Delta}^p$ denotes the boundary of $\Delta^p$.

\begin{lem}\label{adjoint}
	\begin{itemize}
		\item[(1)] $|\ |_{\dcal}: \scal \rightleftarrows \dcal: S^{\dcal}$ is an adjoint pair.
		\item[(2)] The composite of the two adjoint pairs 
		$$
		|\ |_{\dcal}: \scal \rightleftarrows \dcal: S^{\dcal} \text{ and } \tilde{\cdot}: \dcal \rightleftarrows \ccal^{0}: R
		$$ 
		is just the adjoint pair
		$$
		|\ | : \scal \rightleftarrows \ccal^0 : S.
		$$
		\item[(3)] Let $X$ be a diffeological space. Then the singular complex $S^{\dcal}X$ is a subcomplex of the topological singular complex $S\widetilde{X}$.
		\item[(4)] The underlying topological spaces of $\Lambda^p_k$ and $\dot{\Delta}^p$ are subspaces of the topological standard $p$-simplex.
	\end{itemize}
	\begin{proof}
		See \cite[Proposition 9.1 and Lemma 9.2]{origin} for Parts 1 and 2. The smooth map $id: X \longrightarrow R\widetilde{X}$ induces the inclusion $S^{\dcal}X \longhookrightarrow S^{\dcal}R\widetilde{X} = S\widetilde{X}$ (Part 2), which implies Part 3. Part 4 follows immediately from Proposition \ref{axioms} (Axiom 3) and Part 2.
	\end{proof}
\end{lem}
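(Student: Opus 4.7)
My plan is to handle the four parts in order, leveraging the colimit description of $|\ |_\dcal$ and the explicit definition of $R$.

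For Part (1), I would run the standard nerve--realization argument. Since $|K|_\dcal = \colim_{\Delta\downarrow K} \Delta^n$, a smooth map $|K|_\dcal \to X$ is, by the universal property of the colimit, a compatible family of smooth maps $\Delta^n \to X$ indexed by simplices $\sigma \colon \Delta[n]\to K$; this data is exactly a simplicial map $K \to S^\dcal X$. Naturality in $K$ and $X$ is formal. For Part (2), since $\tilde{\cdot}$ is a left adjoint it preserves the defining colimit, giving $\tilde{|K|_\dcal} = \colim \tilde{\Delta^n} = \colim|\Delta[n]| = |K|$ by Axiom 1. Dually, an $n$-simplex of $S^\dcal R X$ is a smooth map $\Delta^n \to RX$, which by the $\tilde{\cdot}\dashv R$ adjunction and Axiom 1 is the same as a continuous map $|\Delta[n]|\to X$, i.e.\ an $n$-simplex of $SX$. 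Uniqueness of adjoints then identifies the composite adjunction with $(|\ |, S)$.

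For Part (3), I would observe that the identity map $X \to R\tilde{X}$ is smooth, because every plot of $X$ is by construction continuous into $\tilde{X}$ and the plots of $R\tilde{X}$ are exactly the continuous parametrizations of $\tilde{X}$. Applying $S^\dcal$ and invoking Part (2) produces the natural map $S^\dcal X \to S^\dcal R\tilde{X} = S\tilde{X}$, which is levelwise injective because a smooth map between diffeological spaces is determined by its underlying function. For Part (4), Axiom 3 applied to $K = \Lambda[p,k]$ and $K = \dot{\Delta}[p]$ tells us that the canonical smooth injection $|K|_\dcal \hookrightarrow \Delta^p$ is a $\dcal$-embedding; since its underlying set is $\Lambda^p_k$ respectively $\dot{\Delta}^p$, this identifies $|K|_\dcal$ with the corresponding diffeological subspace of $\Delta^p$. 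Applying $\tilde{\cdot}$ and using Part (2) then identifies the underlying topological space with $|\Lambda[p,k]|$ or $|\dot{\Delta}[p]|$, which is a topological subspace of $|\Delta[p]| = \tilde{\Delta^p}$ by the classical CW-subcomplex identification.

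The step I expect to be the main obstacle is Part (4): a priori the left adjoint $\tilde{\cdot}$ need not send diffeological embeddings to topological subspace inclusions, so the argument really relies on first rewriting $\Lambda^p_k$ and $\dot{\Delta}^p$ as realizations $|K|_\dcal$ via Axiom 3 and then using that the topological realization of a simplicial subcomplex is a genuine subspace. Verifying that the underlying set of $|K|_\dcal$ matches $\Lambda^p_k$ or $\dot{\Delta}^p$ inside $\Delta^p$ takes a bit of care, but it follows from the same colimit computation already used in Part (2).
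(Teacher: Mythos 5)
Your proposal is correct and follows essentially the same route as the paper: Parts 1--2 via the colimit description of $|\ |_\dcal$ and preservation of colimits by the left adjoint $\tilde{\cdot}$ together with Axiom 1 (the paper delegates this to the cited results of the preceding paper), Part 3 via smoothness of $id:X\to R\widetilde{X}$ and the identification $S^\dcal R = S$, and Part 4 via Axiom 3 combined with Part 2. Your closing remark correctly isolates the real content of Part 4 --- that one must pass through the realization of a subcomplex rather than apply $\tilde{\cdot}$ directly to a $\dcal$-embedding --- so no gaps remain.
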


\begin{defn}\label{WFC}
	Define a map $f :X\longrightarrow Y$ in $\mathcal{D}$ to be
	\begin{itemize}
		\item[$(1)$]
		a {\sl weak equivalence} if $S^{\mathcal{D}} f:S^{\mathcal{D}} X\longrightarrow S^{\mathcal{D}} Y$ is a weak equivalence in the category of simplicial sets,
		\item[$(2)$]
		a {\sl fibration} if the map $f$ has the right lifting property with respect to the inclusions $\Lambda^p_k \longhookrightarrow\Delta^p$ for all $p>0$ and $0\leq k\leq p$, and
		\item[$(3)$]
		a {\sl cofibration} if the map $f$ has the left lifting property with respect to all maps that are both fibrations and weak equivalences.
	\end{itemize}
\end{defn}
Define the sets $\ical$ and $\jcal$ of morphisms of $\dcal$ by
\begin{eqnarray*}
	\ical & = & \{\dot{\Delta}^{p} \longhookrightarrow \Delta^{p} \ | \ p\geq 0 \},\\
	\jcal & = & \{\Lambda^{p}_{k} \longhookrightarrow \Delta^{p} \ | \ p>0,\ 0 \leq k \leq p \}.
\end{eqnarray*}

\begin{lem}\label{generating}
	Let $f  :X\longrightarrow Y$ be a morphism of $\mathcal{D}$.
	\begin{itemize}
		\item[$(1)$]
		The following conditions are equivalent:
		\begin{itemize}
			\item[$(\mathrm{i})$]
			$f  :X\longrightarrow Y$ is a fibration;
			\item[$(\mathrm{ii})$]
			$S^{\mathcal{D}} f : S^{\mathcal{D}} X\longrightarrow S^{\mathcal{D}} Y$ is a fibration;
			\item[$(\mathrm{iii})$]
			$f$ has the right lifting property with respect to $\jcal$.
		\end{itemize}
		\item[$(2)$]
		The following conditions are equivalent:
		\begin{itemize}
			\item[$(\mathrm{i})$]
			$f  :X\longrightarrow Y$ is both a fibration and a weak equivalence;
			\item[$(\mathrm{ii})$]
			$S^{\mathcal{D}} f : S^{\mathcal{D}} X\longrightarrow S^{\mathcal{D}} Y$ is both a fibration and weak equivalence;
			\item[$(\mathrm{iii})$]
			$f$ has the right lifting property with respect to the $\ical$.
		\end{itemize}
	\end{itemize}
	\begin{proof}
		\cite[Lemma 9.3]{origin}.
	\end{proof}
\end{lem}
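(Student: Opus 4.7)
My plan is to reduce everything to the adjunction $|\ |_{\dcal} \dashv S^{\dcal}$ of Lemma \ref{adjoint}(1), together with the standard characterizations of Kan fibrations and trivial Kan fibrations in $\scal$.

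For Part (1), the equivalence $(\mathrm{i}) \Leftrightarrow (\mathrm{iii})$ is immediate: by Definition \ref{WFC}(2), a fibration is precisely a map with RLP against $\{\Lambda^{p}_{k} \hookrightarrow \Delta^{p} \mid p > 0,\ 0 \le k \le p\} = \jcal$. For $(\mathrm{ii}) \Leftrightarrow (\mathrm{iii})$, I would invoke the adjoint lifting correspondence: $S^{\dcal} f$ has RLP against the simplicial horn inclusions $\Lambda^{p}_{k}[\scal] \hookrightarrow \Delta[p]$ in $\scal$ if and only if $f$ has RLP against their realizations $|\Lambda^{p}_{k}[\scal]|_{\dcal} \hookrightarrow |\Delta[p]|_{\dcal}$ in $\dcal$. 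This latter inclusion must then be identified with $\Lambda^{p}_{k} \hookrightarrow \Delta^{p}$ as a morphism of $\dcal$, which is where Axiom 3 (via Proposition \ref{axioms}) enters decisively: the canonical smooth injection from the realization of a subcomplex of $\Delta[p]$ is a $\dcal$-embedding, so $|\Lambda^{p}_{k}[\scal]|_{\dcal} \cong \Lambda^{p}_{k}$ diffeologically. Combined with the classical fact (see \cite{GJ, MP}) that Kan fibrations in $\scal$ are characterized by RLP against horn inclusions, this yields $(\mathrm{ii}) \Leftrightarrow (\mathrm{iii})$.

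Part (2) then proceeds in parallel. The equivalence $(\mathrm{ii}) \Leftrightarrow (\mathrm{iii})$ is the adjoint translation of the classical characterization of trivial Kan fibrations via RLP against the boundary inclusions $\partial\Delta[p] \hookrightarrow \Delta[p]$, again using Axiom 3 to identify $|\partial\Delta[p]|_{\dcal}$ with $\dot{\Delta}^{p}$ as a diffeological subspace of $\Delta^{p}$. The equivalence $(\mathrm{i}) \Leftrightarrow (\mathrm{ii})$ then combines Part (1) with Definition \ref{WFC}(1): $f$ is a fibration iff $S^{\dcal} f$ is (Part 1), and $f$ is a weak equivalence iff $S^{\dcal} f$ is (by definition), so $f$ is both iff $S^{\dcal} f$ is.

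The main obstacle, and the only genuinely non-formal step, is the identification $|\partial\Delta[p]|_{\dcal} \cong \dot{\Delta}^{p}$ and $|\Lambda^{p}_{k}[\scal]|_{\dcal} \cong \Lambda^{p}_{k}$ as diffeological spaces. The underlying-set bijections are standard, but upgrading from the colimit diffeology on $|K|_{\dcal}$ to the sub-diffeology inherited from $\Delta^{p}$ is exactly the content of Axiom 3; without it, one could not conclude that $f$ lifts against the colimit-realization exactly when it lifts against the sub-diffeological horn or boundary. Everything else is a formal consequence of the adjunction and the classical simplicial facts.
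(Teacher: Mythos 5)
Your proposal is correct and takes essentially the same route as the paper's proof (which is deferred to \cite[Lemma 9.3]{origin}): transpose lifting problems across the adjunction $(|\ |_{\dcal}, S^{\dcal})$, identify the realizations of the simplicial horn and boundary inclusions with $\Lambda^{p}_{k} \hookrightarrow \Delta^{p}$ and $\dot{\Delta}^{p} \hookrightarrow \Delta^{p}$ via Axiom 3, and invoke the classical characterizations of Kan fibrations and trivial Kan fibrations. You also correctly isolate Axiom 3 as the sole non-formal ingredient, consistent with what Remark \ref{ingredients} says about the inputs to this lemma.
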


Using these results, the following theorem, which is the main result of \cite{origin}, is obtained.
\begin{thm}\label{originmain}
	With Definition \ref{WFC}, $\mathcal{D}$ is a compactly generated model category whose object is always fibrant. $\ical$ and $\jcal$ are the sets of generating cofibrations and generating trival cofibrations respectively.
		\if0
		\item[(2)] Let 
		$f  : X \longrightarrow Y$ be a smooth map between diffeological spaces. Then, $f$ is a weak equivalence if and only if 
		$$
		\pi^{\dcal}_{p}(f):\pi^{\dcal}_{p}(X, x) \longrightarrow \pi^{\dcal}_{p}(Y, f(x))
		$$
		is bijective for any $p \geq 0$ and any $x \in X$.
		\fi
	\begin{proof}
		\cite[Theorem 1.3]{origin}.
	\end{proof}
\end{thm}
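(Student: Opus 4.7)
My approach would be to apply a standard recognition theorem for compactly generated model categories (see \cite{MP}) with $\ical$ and $\jcal$ as the proposed generating (trivial) cofibrations. The hypotheses to verify are: (a) completeness and cocompleteness of $\dcal$; (b) weak equivalences satisfy 2-out-of-3 and are closed under retracts; (c) compactness of the domains of $\ical$ and $\jcal$; (d) the lifting classes match up, i.e., $\ical$-inj equals the intersection of $\jcal$-inj with the weak equivalences; and (e) every $\jcal$-cell complex is a cofibration and a weak equivalence.

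Hypotheses (a) and (b) are essentially free: completeness and cocompleteness follow from Proposition \ref{conven}(1), and the 2-out-of-3 and retract properties are pulled back from $\scal$ via the functor $S^{\dcal}$. The identification in (d) is exactly the content of Lemma \ref{generating}. For (c), Proposition \ref{conven}(1) says that $\dcal$-embeddings are preserved by pushouts and transfinite composites, so any cellular attachment from $\ical$ or $\jcal$ produces an embedding; since $\dot{\Delta}^p$ and $\Lambda^p_k$ are built from finitely many simplices, smooth maps out of them should factor through a finite stage of any transfinite composite of embeddings, witnessing $\omega$-compactness.

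The decisive step is (e). By Axiom 4 (Proposition \ref{axioms}), each generator of $\jcal$ is a deformation retract in $\dcal$. Cartesian closedness (Proposition \ref{conven}(2)) then lets one glue the retraction and contracting homotopy across pushouts along $\dcal$-embeddings, so any pushout of a $\jcal$-generator is again a $\dcal$-deformation retract, hence a weak equivalence. For transfinite composites, one passes through $S^{\dcal}$ and uses the fact that filtered colimits in $\scal$ preserve trivial cofibrations, combined with the factorization-through-a-finite-stage principle from step (c). That every $\jcal$-cell complex is a cofibration follows formally once the small object argument applied to $\ical$ produces the required factorizations. Finally, fibrancy of every object is immediate from Axiom 4: any smooth map $\Lambda^p_k \to X$ extends along $\Lambda^p_k \hookrightarrow \Delta^p$ by composing with the deformation retraction.

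The main obstacle, I expect, is the compactness and colimit-preservation arguments underpinning (c) and (e), because weak equivalences are defined through the right adjoint $S^{\dcal}$, and one must check that $S^{\dcal}$ interacts well with the transfinite colimits of $\dcal$-embeddings appearing in the small object argument. This requires a precise understanding of how smooth plots of $\Delta^p$---whose diffeology is the final structure of Definition \ref{simplices} rather than the sub-diffeology of $\abb^p$---behave with respect to cellular attachments, and is the technical heart of the construction carried out in \cite{origin}.
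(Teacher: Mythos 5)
Your proposal is correct and reconstructs essentially the argument that the paper delegates to \cite[Theorem 1.3]{origin}: a recognition theorem for compactly generated model categories fed by the convenient categorical properties of $\dcal$ (Proposition \ref{conven}), Axioms 1--4 for the standard simplices (Proposition \ref{axioms}), and the lifting characterizations of Lemma \ref{generating}, with Axiom 4 supplying both the fibrancy of every object and the fact that pushouts of $\jcal$-generators are $\dcal$-deformation retracts. These are exactly the ingredients the paper itself isolates in Remark \ref{ingredients}, so your route coincides with the paper's.
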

	
The following theorem shows that the singular complex $S^{\dcal}X$ captures smooth homotopical properties of $X$, and that our model structure on $\dcal$ organizes the smooth homotopy theory of diffeological spaces. See \cite[Section 3.1]{CW} or \cite[Chapter 5]{I} for the smooth homotopy groups $\pi^{\dcal}_{p}(X, x)$ of a pointed diffeological space $(X, x)$, and see \cite[p. 25]{GJ} for the homotopy groups $\pi_p(K, x)$ of a pointed Kan complex $(K, x)$.

\begin{thm}\label{homotopygp}
	Let $(X, x)$ be a pointed diffeological space. Then, there exists a natural bijection
	$$
	\varTheta_{X} : \pi^{\dcal}_{p}(X, x) \longrightarrow \pi_{p}(S^{\dcal}X, x) \ \  \text{for} \ \ p \geq 0,
	$$
	that is an isomorphism of groups for $p > 0$. \\
	\begin{proof}
		\cite[Theorem 1.4]{origin}.
	\end{proof}
\end{thm}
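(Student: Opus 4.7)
My plan is to construct $\varTheta_X$ directly from the adjunction $|\ |_\dcal \dashv S^\dcal$ and to verify the required properties by combining Axioms 2--4 with the fact that $S^\dcal X$ is a Kan complex.

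First, since every object of $\dcal$ is fibrant by Theorem \ref{originmain}, Lemma \ref{generating}(1) gives that $S^\dcal X$ is a Kan complex, so $\pi_p(S^\dcal X, x)$ admits the standard combinatorial description as the set of $p$-simplices whose boundary faces are all the degenerate simplex at $x$, modulo the simplicial homotopy relation. By the adjunction together with Axiom 3 (which, using that $|\ |_\dcal$ preserves colimits, yields $|\partial\Delta[p]|_\dcal = \dot{\Delta}^p$ as a diffeological subspace of $\Delta^p = |\Delta[p]|_\dcal$), such $p$-simplices correspond bijectively to smooth maps $(\Delta^p, \dot{\Delta}^p)\to (X, x)$. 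Sending the smooth-homotopy class of such a map to its simplicial-homotopy class in $S^\dcal X$ produces a candidate for $\varTheta_X$ that is manifestly natural in $X$.

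Next, I would verify that the smooth and simplicial homotopy relations correspond under this assignment. The non-trivial direction is to promote a simplicial homotopy between two boundary-trivial $p$-simplices of $S^\dcal X$ to a smooth homotopy between the associated maps $(\Delta^p, \dot{\Delta}^p)\to (X, x)$. Here Axiom 4 plays the key role: the deformation retracts $\Lambda^p_k \hookrightarrow \Delta^p$ provide smooth horn-fillers, so the inductive construction of a homotopy from simplicial fillers can be upgraded to a genuine smooth map on $\Delta^p \times \Delta^1$ (using Axiom 2 for the requisite affine identifications and the explicit diffeology of $\Delta^1$). The same comparison identifies the simplex-based description above with the standard sphere- or cube-based definition of $\pi_p^\dcal(X, x)$ from \cite{CW, I}, via the smooth affine correspondence between $\Delta^p$ and $I^p$ together with Axiom 4-based extensions.

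For $p>0$, the group structure on $\pi_p(S^\dcal X, x)$ is computed simplicially via face composition inside a $(p+1)$-simplex, while the group law on $\pi_p^\dcal(X, x)$ is defined by smooth concatenation; translating the former through the adjunction and a chosen smooth model of the comultiplication (again built using Axiom 4) gives the required identification. I expect the main obstacle to lie in this last step: matching the two presentations of the group law at the level of homotopy classes requires controlled smooth extensions of partially defined maps on cylinders, which ultimately reduces to careful bookkeeping with the deformation retracts of Axiom 4 and checking that these retracts are chosen compatibly across simplicial dimensions. Once this is in place, bijectivity of $\varTheta_X$ for $p\geq 0$ and the group-isomorphism property for $p>0$ follow formally.
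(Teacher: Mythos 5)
The paper gives no in-text proof of this theorem; its ``proof'' is the citation \cite[Theorem 1.4]{origin}. So your proposal can only be measured against the argument of that reference and against the constraints the present paper itself records. Your overall architecture --- an adjunction-induced comparison map, the Kan property of $S^{\dcal}X$ supplying the combinatorial description of $\pi_p(S^{\dcal}X,x)$, Axiom 4 providing smooth horn-fillers to translate between simplicial and smooth homotopies, and a final matching of group laws --- is the right general shape and is consistent with how the cited result is established.

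There is, however, a genuine gap at the step you dispatch as ``the smooth affine correspondence between $\Delta^p$ and $I^p$ together with Axiom 4-based extensions.'' The groups $\pi_p^{\dcal}(X,x)$ of Christensen--Wu and Iglesias-Zemmour are defined using spheres or cubes carrying the \emph{sub}-diffeology of Euclidean space, whereas the simplices $\Delta^p$ of Definition \ref{simplices} carry a different diffeology, which does not coincide with $\Delta^p_{\mathrm{sub}}$ for $p\geq 2$. An affine bijection between $\Delta^p$ and $I^p$ is therefore not a diffeomorphism for the relevant structures, and Axioms 1--4 alone cannot bridge the two models: Remark \ref{ingredients} explicitly singles out Theorem \ref{homotopygp}, together with Lemma \ref{delta}(3)--(4), as results that are \emph{not} formal consequences of the axioms but depend on the particular construction of the diffeologies, via the comparison map $id:\Delta^p\to\Delta^p_{\mathrm{sub}}$ and its restriction to a diffeomorphism away from the $(p-2)$-skeleton. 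Your plan invokes only Axioms 2--4 and the Kan property, so it omits this comparison entirely, and that comparison is precisely the non-formal core of the proof in \cite{origin}. The remaining hard steps you identify (promoting simplicial homotopies to smooth ones on $\Delta^p\times\Delta^1$, and reconciling the face-composition and concatenation group laws) are plausibly sketched but are acknowledged rather than executed, so as written the proposal is an outline with its central technical input missing.
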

From Theorem \ref{homotopygp}, we see that weak equivalences are just smooth maps inducing isomorphisms on smooth homotopy groups.

For the proof of Theorem \ref{Quillenequiv}, we also need the following lemma.
\begin{lem}\label{delta}
	\begin{itemize}
		\item[(1)] Let $f  :\Delta^r \longrightarrow \Delta^p\times I$ be an affine map (i.e., a map preserving convex combinations). Then, $f$ is smooth.
		\item[(2)] The horn $\Lambda^p_k$, and hence, the standard $p$-simplex $\Delta^p$ is constractible in $\dcal$.
		\item[(3)] The map $id : \Delta^p \longrightarrow \Delta_{{\rm sub}}^p$ is smooth, which restricts to the diffemorphism $id : \Delta^p - {\rm sk}_{p-2} \ \Delta^p \xrightarrow[\cong]{} (\Delta^p - {\rm sk}_{p-2} \ \Delta^p)_{\rm sub}$, where ${\rm sk}_{p-2} \ \Delta^p$ denotes the $(p-2)$-skeleton of $\Delta^p$.
		\item[(4)] $\dot{\Delta}^p$ is a deformation retract of $\Delta^p - \{b_p\}$ in $\dcal$, where $b_p$ is the barycenter of $\Delta^p$.
	\end{itemize}
	\begin{proof}
		Part 1 follows immediately from Proposition \ref{axioms} (Axiom 2). See \cite[Corollary 8.3, Lemmas 3.1 and 4.2, and Proposition 6.2]{origin} for Parts 2-4.
	\end{proof}
\end{lem}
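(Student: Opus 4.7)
The plan is to address the four parts in turn, leaning on Proposition \ref{axioms} (particularly Axioms 2 and 4) and the inductive construction of the diffeology on $\Delta^p$ in Definition \ref{simplices}.

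Part 1 is essentially immediate: writing $f = (f_1, f_2) \colon \Delta^r \to \Delta^p \times I$, each component is affine, hence smooth by Axiom 2 (using $I \cong \Delta^1$), and smoothness into a product is detected componentwise. For Part 2, I would reduce to the contractibility of $\Delta^p$ via Axiom 4 and then exhibit the affine contraction $H \colon \Delta^p \times I \to \Delta^p$, $H(x,t) = (1-t)(k) + tx$. Since $\Delta^p$ carries the final diffeology for the maps $\varphi_i$, it suffices to show that each composite $H \circ (\varphi_i \times \mathrm{id}_I)$ is smooth; I would unfold the convex combination, recognize the output as some $\varphi_j$ composed with a smooth parameter change, and close the loop by induction on $p$.

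For Part 3, the forward direction---that $\mathrm{id} \colon \Delta^p \to \Delta^p_{\mathrm{sub}}$ is smooth---reduces, via the final diffeology, to showing each $\varphi_i \colon \Delta^{p-1} \times [0,1) \to \abb^p$ is smooth, which follows inductively since $\varphi_i$ is affine and the induction hypothesis gives $\mathrm{id} \colon \Delta^{p-1} \to \Delta^{p-1}_{\mathrm{sub}}$ smooth. The reverse direction on $\Delta^p \smallsetminus \mathrm{sk}_{p-2}\Delta^p$ is more delicate: given a plot $P \colon U \to \Delta^p_{\mathrm{sub}}$ whose image avoids the $(p-2)$-skeleton, I would, near each $u_0 \in U$, choose $i$ with $P(u_0)_i \in (0,1)$ (available since at most one barycentric coordinate of $P(u_0)$ vanishes and $P(u_0)$ is not a vertex), set $\tau(u) = 1 - P(u)_i$, rescale the remaining coordinates by $\tau(u)^{-1}$ to define $\tilde{P}(u)$, and obtain the factorization $P = \varphi_i \circ (\tilde{P}, \tau)$. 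One then verifies that $\tilde{P}$ avoids $\mathrm{sk}_{p-3}\Delta^{p-1}$, so that the induction hypothesis applies. For Part 4, I would take the radial projection $r \colon \Delta^p \smallsetminus \{b_p\} \to \dot{\Delta}^p$ from the barycenter together with the straight-line homotopy $H(x,t) = (1-t)x + tr(x)$; smoothness can be read off in the local parametrizations supplied by the $\varphi_i$'s, with Part 3 handling the comparison to the sub-diffeology on the open cells.

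The principal obstacle is the reverse direction of Part 3: constructing the smooth local factorization of a sub-diffeology plot through one of the $\varphi_i$'s. The rescaling by $\tau^{-1}$ becomes singular exactly along the face where $P_i = 1$, so managing the choice of $i$ across $U$ and ensuring that $\tilde{P}$ lies in the correct stratum for induction is the technical heart of the argument; the smoothness verifications in Parts 2 and 4 ultimately rest on this control over how the inductive diffeology compares with the sub-diffeology away from the low-dimensional skeleton.
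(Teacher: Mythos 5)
Parts 1 and 3 of your proposal are sound: Part 1 is exactly the paper's argument (Axiom 2 applied componentwise plus the universal property of the product), and your inductive rescaling of sub-diffeology plots through the charts $\varphi_i$ for Part 3 is essentially the content of Lemmas 3.1 and 4.2 of \cite{origin}.

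The gap is in Parts 2 and 4, and in both cases it is the same missing point: straight-line homotopies \emph{into} $\Delta^p$ are not smooth for $p\geq 2$. Axiom 2 and Part 1 control affine maps \emph{out of} a single simplex; they say nothing about affine maps out of the product $\Delta^p\times I$, and the contraction $H(x,s)=(1-s)(k)+sx$ in fact fails to be smooth. Concretely, take $p=2$, $k=0$ and compose $H$ with the plot $(a,b)\mapsto\bigl(\varphi_1((0,1),b^2),\,1-a^2\bigr)$ of $\Delta^2\times I$: the result is $G(a,b)=(a^2,(1-a^2)(1-b^2),(1-a^2)b^2)$, which sends the origin to the vertex $(1)$. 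Since $(1)$ lies in the image of $\varphi_1$ only, a local factorization $G=\varphi_1\circ(\zeta,\upsilon)$ near the origin would force $\upsilon=a^2+b^2-a^2b^2$ and $\zeta=(a^2,(1-a^2)b^2)/\upsilon$, which has no continuous extension to the origin (its limits along $b=0$ and along $a=0$ are $(1,0)$ and $(0,1)$ respectively); as $G$ is not locally constant there either, $G$ is not a plot and $H$ is not smooth. This is the same phenomenon behind the observation in Remark \ref{proof} that $|\Delta[1]\times\Delta[1]|_\dcal\neq\Delta^1\times\Delta^1$. For Part 4 the radial retraction fails for an even more elementary reason: the exit time of the ray from the barycenter is $s(x)=\bigl(1-(p+1)\min_i x_i\bigr)^{-1}$, and $\min_i x_i$ is not differentiable along the medians, where by your own Part 3 the diffeology agrees with the sub-diffeology and classical smoothness is required. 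The paper sidesteps both problems by citation: in \cite{origin} the contractibility of $\Lambda^p_k$ and $\Delta^p$ is deduced inductively from Axiom 4 (Corollary 8.3) rather than from an explicit affine contraction, and the deformation retraction of $\Delta^p-\{b_p\}$ onto $\dot{\Delta}^p$ is constructed by hand in Proposition 6.2 so as to be smooth for the inductively defined diffeology.
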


\begin{rem}\label{ingredients}
	Theorem \ref{homotopygp} and Lemma \ref{delta}(3)-(4) are established for the standard simplices in Definition \ref{simplices}. The proofs of Lemmas \ref{adjoint}-\ref{generating}, Theorem \ref{originmain}, and Lemma \ref{delta}(1)-(2) are constructed using only the convenient properties of the category $\dcal$ (Proposition \ref{conven}) and Axioms 1-4 for the standard simplices (Proposition \ref{axioms}).
\end{rem}

\begin{rem}\label{arc}
	The basic facts on arc-generated spaces are summarized in \cite[Section 2.2]{origin}. The category $\ccal^{0}$ is a compactly generated model category whose object is always fibrant (cf. \cite[Notation 1.1.2 and Example 11.1.8]{Hi}), which is shown in the same manner as in the cases of the category of topological spaces (\cite[Section 8]{DS}) and that of compactly generated Hausdorff spaces (\cite[Proposition 9.2 in Chapter I]{GJ}). We can easily see that the adjoint pair $I :\czero \rightleftarrows \tcal : \alpha$ (\cite[Lemma 2.7]{origin}) is a pair of Quillen equivalences.
\end{rem}

For the adjoint pairs $(|\ |_\dcal, S^\dcal)$ and $(\tilde{\cdot}, R)$, we have the following result.

\begin{lem}\label{Quillenpairs}
	$(|\ |_{\mathcal{D}},S^{\mathcal{D}})$ and $(\widetilde{\cdot},R)$ are Quillen pairs.
	\begin{proof}
		Since $S^{\mathcal{D}}$ preserves both fibrations and trivial fibrations (Lemma \ref{generating}), $(|\ |_{\mathcal{D}},S^{\mathcal{D}})$ is a Quillen pair. Since $R$ also preserves both fibrations and trivial fibrations (Proposition \ref{conven}(3) and Lemma \ref{adjoint}(4)), $(\widetilde{\cdot},R)$ is a Quillen pair.
	\end{proof}
\end{lem}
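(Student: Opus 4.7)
The plan is to use the standard criterion that an adjunction $(F,G)$ between model categories is a Quillen pair if and only if the right adjoint $G$ preserves fibrations and trivial fibrations. So for each of the two pairs I verify that $S^{\dcal}$ and $R$ respectively have this preservation property.

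For $(|\ |_{\dcal}, S^{\dcal})$, the verification is immediate from Lemma \ref{generating}: that lemma states that a morphism $f$ in $\dcal$ is a fibration (respectively a trivial fibration) if and only if $S^{\dcal}f$ is a fibration (respectively a trivial fibration) in $\scal$. The ``only if'' direction of each equivalence says exactly that $S^{\dcal}$ sends fibrations and trivial fibrations in $\dcal$ to fibrations and trivial fibrations in $\scal$.

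For $(\widetilde{\cdot}, R)$, I would invoke the adjunction together with the compactly generated structure of $\dcal$ from Theorem \ref{originmain}. By adjointness, $R$ preserves fibrations (respectively trivial fibrations) if and only if its left adjoint $\widetilde{\cdot}$ sends every map in the generating set $\jcal$ (respectively $\ical$) to a trivial cofibration (respectively a cofibration) in $\ccal^{0}$. By Lemma \ref{adjoint}(4), the images $\widetilde{\Lambda^{p}_{k}} \hookrightarrow \widetilde{\Delta^{p}}$ and $\widetilde{\dot{\Delta}^{p}} \hookrightarrow \widetilde{\Delta^{p}}$ are the standard topological horn inclusion and boundary inclusion in the topological standard $p$-simplex, which are a trivial cofibration and a cofibration respectively in the compactly generated model category $\ccal^{0}$ recalled in Remark \ref{arc}.

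I do not expect any serious obstacle: the whole assertion is a formal consequence of the characterization of (trivial) fibrations in $\dcal$ by the functor $S^{\dcal}$ together with the identification of the underlying topological spaces of the generating (trivial) cofibrations of $\dcal$ with the usual generating (trivial) cofibrations of $\ccal^{0}$. The only point that needs care is the appeal to the generating set description for the second pair; this is legitimate because $\dcal$ is compactly generated and smallness of the sources of $\ical$ and $\jcal$ has already been established in \cite{origin}.
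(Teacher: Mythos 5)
Your proposal is correct and follows essentially the same route as the paper: the first pair is handled by Lemma \ref{generating} exactly as in the text, and your adjointness argument for $(\widetilde{\cdot},R)$ merely spells out what the paper's citation of Proposition \ref{conven}(3) together with Lemma \ref{adjoint}(4) is implicitly doing, namely transposing the lifting problems along the adjunction and identifying $\widetilde{\Lambda^p_k}\hookrightarrow\widetilde{\Delta^p}$ and $\widetilde{\dot{\Delta}^p}\hookrightarrow\widetilde{\Delta^p}$ with the standard topological generating (trivial) cofibrations of $\ccal^0$.
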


The following lemma states that the composite of the adjoint pairs $(|\ |_\dcal, S^\dcal)$ and $(\tilde{\cdot}, R)$ is a pair of Quillen equivalences.
\begin{lem}\label{clequiv}
	$|\ |: \scal \rightleftarrows \ccal^{0}:S$ is a pair of Quillen equivalences.
	\begin{proof}
		This result is shown in the same manner as in the case of $|\ |:\scal \rightleftarrows \ucal:S$, where $\ucal$ is the category $\tcal$ of topological spaces or the category $\kcal$ of compactly generated Hausdorff spaces; see \cite[Theorem 16.1]{May} and \cite[Theorem 11.4 in Chapter I]{GJ} for the cases $\tcal$ and $\kcal$ respectively.
	\end{proof}
\end{lem}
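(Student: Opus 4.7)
The plan is to follow the classical argument (\cite[Theorem 11.4 in Chapter I]{GJ} or \cite[Theorem 16.1]{May}) for $|\ |: \scal \rightleftarrows \kcal: S$, verbatim with $\ccal^0$ in place of $\kcal$. By Lemma \ref{Quillenpairs}, the composite $(|\ |, S)$ is already a Quillen pair. Since every simplicial set is cofibrant and every arc-generated space is fibrant (Remark \ref{arc}), it suffices to check that the unit $\eta_K: K \longrightarrow S|K|$ is a weak equivalence in $\scal$ for every $K$ and that the counit $\varepsilon_X: |SX| \longrightarrow X$ is a weak equivalence in $\ccal^0$ for every $X$.

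The counit is the essential content. By Remark \ref{arc}, weak equivalences in $\ccal^0$ are precisely the weak homotopy equivalences of the underlying topological spaces, and $|SX|$ is always a CW complex and therefore lies in $\ccal^0$. The fact that $\varepsilon_X$ induces isomorphisms on all homotopy groups is classical, proved by Milnor's skeletal induction together with the elementary identification $\pi_*(SX) \cong \pi_*(X)$; the argument is natural in $X$ and appeals to no separation or local-compactness hypothesis absent from $\ccal^0$.

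The unit then follows formally. Applying $|\ |$ to $\eta_K$ and composing with $\varepsilon_{|K|}$ yields the identity on $|K|$ by the triangle identity, so two-out-of-three together with the counit step forces $|\eta_K|$ to be a weak equivalence in $\ccal^0$; but a map of simplicial sets is a weak equivalence precisely when its geometric realization is, so $\eta_K$ is a weak equivalence in $\scal$, completing the proof. The main obstacle to be careful about is the counit: one must confirm that Milnor's classical argument does not tacitly invoke features of $\kcal$ or $\tcal$ that may fail in $\ccal^0$. Since the argument proceeds by induction on the skeleta of CW complexes, all of which sit naturally in $\ccal^0$, and uses only that weak equivalences in $\ccal^0$ agree with weak homotopy equivalences, no genuine obstruction arises.
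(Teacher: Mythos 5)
Your overall strategy---reduce to the classical comparison theorem for $|\ |:\scal\rightleftarrows\ucal:S$ and observe that it transfers to $\ccal^{0}$ because $|SX|$ is a CW complex, weak equivalences in $\ccal^{0}$ are weak homotopy equivalences, and no separation or local-compactness hypotheses intervene---is exactly what the paper does: its proof of this lemma is nothing more than a citation of \cite[Theorem 16.1]{May} and \cite[Theorem 11.4 in Chapter I]{GJ}. The point worth flagging is that the internal structure you sketch inverts the logic of both cited sources. In \cite{May} the essential content is the \emph{unit}: $\eta_K : K \longrightarrow S|K|$ is shown to be a weak equivalence by comparing the homology of $K$ with the cellular homology of $|K|$, passing to universal covers, and invoking the simplicial Whitehead theorem (precisely the template the paper redeploys for $|\ |_{\dcal}$ in Section 4.1; see Remark \ref{proof} and Lemma \ref{1streduction}(iii)); the counit is then deduced formally from the triangle identity $S\varepsilon_X \circ \eta_{SX} = 1_{SX}$ together with the elementary isomorphism $\pi_{\ast}(SX)\cong\pi_{\ast}(X)$. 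In \cite{GJ} the unit is again the substance, proved via minimal fibrations and Quillen's theorem that realizations of Kan fibrations are Serre fibrations. Neither source contains a ``skeletal induction'' proof of the counit, so your counit step is a black box whose attributed proof does not exist as described. Relatedly, your derivation of the unit from the counit rests on ``a simplicial map is a weak equivalence precisely when its realization is''; this is harmless if one adopts the realization definition of weak equivalences in $\scal$ (as in \cite{GJ}), but under the intrinsic definition via simplicial homotopy groups that statement is itself equivalent to the hard half of the theorem, and the argument would become circular. The conclusion is of course correct, and citing the classical theorem wholesale---as the paper does---is legitimate; just be aware that, were you asked to open the box, the proof runs in the opposite direction from the one you describe.
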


\begin{rem}\label{proof}
	As mentioned in the proof of Proposition \ref{clequiv}, two proofs of the fact that $(|\ |, S)$ is a pair of Quillen equivalences are known (\cite[Theorem 16.1]{May}, \cite[Theorem 11.4 in Chapter I]{GJ}). We prove Part 1 of Theorem \ref{Quillenequiv} in a way analogous to the proof of \cite[Theorem 16.1]{May}. For this, we develope the singular homology theory for diffeological spaces in Section 3.
	\par\indent
	The other proof is based on Quillen's result that the topological realization of a Kan fibration is a Serre fibration (\cite[Theorem 10.10 in Chapter I]{GJ}). For the realization functor $|\ |_{\mathcal{D}}:\scal\longrightarrow\mathcal{D}$, the analogous result does not seem to hold; note that $|\ |_{\mathcal{D}}$ does not preserve finite products (in fact, $|\Delta[1]\times\Delta[1]|_\dcal$ is not the product $|\Delta[1]|_\dcal \times |\Delta[1]|_\dcal$ but a diffeological space obtained by patching together two copies of $\Delta^2$). Thus, it seems that Theorem \ref{Quillenequiv}(1) can not be proved according to the idea of the proof of \cite[Theorem 11.4 in Chapter I]{GJ}.
\end{rem}

\section{Singular homology of a diffeological space.}
In this section, we introduce the singular homology of a diffeological space and establish the results used in the proof of Theorem \ref{Quillenequiv}.
\par\indent
Let us begin by recalling the homology groups of a simplicial set.
\par\indent
For a category $\ccal$, $s\ccal$ denotes the category of simplicial objects in $\ccal$ (cf. \cite[Section 2]{May}). Let $\zbb$-$mod$ denote the category of $\zbb$-modules, and let $Kom_{\geq0}(\zbb$-$mod)$ denote the category of non-negatively graded chain complexes of $\zbb$-modules.
The free $\zbb$-module functor $\zbb\cdot: Set \longrightarrow \zbb\text{-}mod$ extends to the functor $\mathbb{Z}\cdot:\scal = sSet \longrightarrow s\zbb\text{-}mod$. The composite
\begin{center}
	$\scal \xrightarrow{\zbb\cdot} s\zbb\text{-}mod \longrightarrow Kom_{\geq 0}(\zbb\text{-}mod)$
\end{center} 
is also denoted by $\zbb\cdot$, where the functor $s\zbb\text{-}mod$ $\longrightarrow Kom_{\geq 0}(\zbb\text{-}mod)$ is defined by assigning to a simplicial $\zbb$-module $M$ the chain complex 
\begin{center}
	$\cdots \xrightarrow{\partial_1} M_1 \xrightarrow{\partial_0} M_0 \longrightarrow 0 $
\end{center} 
with $\partial_n = \sum_{i=0}^{n} (-1)^i d_i$.
\par\indent
A {\sl simplicial pair} $(K,L)$ consists of a simplicial set $K$ and a simplicial subset $L$ of $K$. Define the {\sl homology} $H_{\ast}(K,L)$ of a simplicial pair $(K,L)$ by 
$$
H_{\ast}(K,L)=H_{\ast}(\zbb(K)/\zbb(L)).
$$
The homology of a simplicial pair $(K,\phi)$ is usually referred to as the homology of a simplicial set $K$, written $H_{\ast}(K)$.
\par\indent
A {\sl diffeological pair} $(X,A)$ consists of a diffelogical space $X$ and a diffeological subspace $A$ of $X$. Define the {\sl singular homology} $H_{\ast}(X,A)$ of a 
diffeological pair 
$(X,A)$ by
$$
H_{\ast}(X,A)=H_{\ast}(S^{\dcal} X,S^{\dcal} A).
$$
The singular homology of a diffeological pair $(X,\phi)$ is usually referred to as the {\sl singular homology} of a diffeological space $X$, written $H_{\ast}(X)$.
\par\indent
We show that the singular homology for diffeological pairs has the following desirable properties.

\begin{prop}\label{homologytheory}
	\begin{itemize}
		\item[(1)]
		(Exactness) For a diffeological pair $(X,A)$, there exists a natural exact sequence
		\begin{center}
			\begin{tikzcd}
				&\cdots
				\arrow[d, phantom, ""{coordinate, name=Z}]
				&\phantom{H_p(X,A)} \arrow[dll,
				"\partial",
				rounded corners,
				to path={ -- ([xshift=2ex]\tikztostart.east)
					|- (Z) [near end]\tikztonodes
					-| ([xshift=-2ex]\tikztotarget.west)
					-- (\tikztotarget)}] \\
				H_p(A) \arrow[r]
				& H_p(X) \arrow[r]
				\arrow[d, phantom, ""{coordinate, name=Z}]
				& H_p(X,A) \arrow[dll,
				"\partial",
				rounded corners,
				to path={ -- ([xshift=2ex]\tikztostart.east)
					|- (Z) [near end]\tikztonodes
					-| ([xshift=-2ex]\tikztotarget.west)
					-- (\tikztotarget)}] \\
				H_{p-1}(A) \arrow[r]
				&  \cdots
				&  
			\end{tikzcd}	
		\end{center}
		\item[(2)]
		(Excision) If $(X,A)$ is a diffeological pair and $U$ is a diffeological subspace of $X$ such that $\overline{U}\subset \mathring{A}$, then the inclusion $(X-U,A-U)\longhookrightarrow (X,A)$ induces an isomorphism $H_{\ast}(X-U,A-U)\xrightarrow[\ \cong\ ]{} H_{\ast}(X,A)$.
		\item[(3)]
		(Homotopy) Let $f,g:(X,A)\longrightarrow (Y,B)$ be maps of diffeological pairs. If $f\simeq g$, then the induced maps $f_{\ast},g_{\ast}:H_{\ast}(X,A)\longrightarrow H_{\ast}(Y,B)$ coincide.
		\item[(4)]
		(Dimension)
		Let $\ast$ be a terminal object of $\mathcal{D}$. Then
		\[
		H_p({\ast})=
		\begin{cases}
		\zbb & p=0 \\
		0 & p\neq 0.
		\end{cases}
		\]
		\item[(5)]
		(Additivity) If $X$ is the coproduct of diffeological spaces $\{X_i \}$, then the inclusions $X_i \longrightarrow X$ induce an isomorphism
		$$
		\underset{i}{\bigoplus} H_\ast(X_i) \xrightarrow[\cong]{} H_\ast(X).
		$$
	\end{itemize}	
\end{prop}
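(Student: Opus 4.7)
The plan is to follow the template of the classical proof of the Eilenberg--Steenrod axioms for singular homology, adapting each chain-level construction to $\dcal$. The technical inputs that make every step go through are Axiom 2 (affine maps $\Delta^p\to\Delta^q$ are smooth) and Lemma \ref{delta}(1) (affine maps $\Delta^r\to\Delta^p\times I$ are smooth); these guarantee that the barycentric cones, subdivisions, and prism operators built from affine maps produce smooth---rather than merely continuous---singular simplices, so the classical chain-level formulas transport verbatim.

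Parts 1, 4, and 5 are essentially formal. For exactness, since $A\hookrightarrow X$ is a diffeological subspace inclusion, every smooth simplex of $A$ remains smooth in $X$, so $S^\dcal A$ is a subcomplex of $S^\dcal X$, and the short exact sequence
$$
0\longrightarrow \zbb(S^\dcal A)\longrightarrow \zbb(S^\dcal X)\longrightarrow \zbb(S^\dcal X)/\zbb(S^\dcal A)\longrightarrow 0
$$
of chain complexes yields the long exact sequence of Part 1. For dimension, $S^\dcal(\ast)$ has a unique $p$-simplex in every degree, giving the standard computation. For additivity, Axiom 1 makes $\tilde\Delta^p$ connected, so any smooth $\sigma:\Delta^p\to\coprod X_i$ has connected topological image and therefore factors through a single summand; thus $S^\dcal(\coprod X_i)=\coprod S^\dcal X_i$, and additivity follows from that of $\zbb\cdot$ and homology.

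For Part 3, a smooth homotopy $H:X\times I\to Y$ of pairs defines the prism operator
$$
P(\sigma)\;=\;\sum_{i=0}^{p}(-1)^{i}\,H\circ(\sigma\times \mathrm{id}_I)\circ e_i,
$$
where $e_0,\ldots,e_p:\Delta^{p+1}\to\Delta^p\times I$ are the standard affine prism simplices. Cartesian closedness (Proposition \ref{conven}(2)) makes $\sigma\times\mathrm{id}_I$ smooth, and Lemma \ref{delta}(1) makes each $e_i$ smooth, so $P(\sigma)$ is a chain of smooth $(p+1)$-simplices in $Y$. The combinatorial identity $\partial P+P\partial = g_*-f_*$ then holds in $\zbb(S^\dcal Y)$, respects the subcomplex $\zbb(S^\dcal B)$, and yields $f_*=g_*$ on $H_*(X,A)$.

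Part 2 is the main obstacle. Define $\mathrm{Sd}:\zbb(S^\dcal X)\to\zbb(S^\dcal X)$ and a chain homotopy $T:\mathrm{Sd}\simeq\mathrm{id}$ by the standard inductive barycentric-cone formulas applied to $\mathrm{id}_{\Delta^p}\in \zbb(S^\dcal\Delta^p)_p$, then transport to arbitrary $\sigma:\Delta^p\to X$ via $\sigma_*$; Axiom 2 ensures every cone appearing is an affine, hence smooth, map, so $\mathrm{Sd}$ and $T$ preserve smoothness. For the smallness step, the hypothesis $\overline{U}\subset\mathring{A}$ gives an open cover $\{X-\overline{U},\mathring{A}\}$ of $\tilde X$; for any smooth $\sigma:\Delta^p\to X$, Axiom 1 identifies $\tilde\Delta^p$ with the compact metric topological standard simplex, and the continuous map $\tilde\sigma$ pulls this cover back to a finite open cover. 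Since the mesh of iterated barycentric subdivision of $\Delta^p$ tends to zero, the Lebesgue covering lemma yields an $N$ such that every smooth simplex occurring in $\mathrm{Sd}^{N}(\sigma)$ factors through $X-U$ or through $A$. The standard chain-equivalence argument combining $\mathrm{Sd}\simeq\mathrm{id}$ with this smallness property then produces a chain equivalence
$$
\zbb(S^\dcal X)/\zbb(S^\dcal A)\;\simeq\;\zbb(S^\dcal(X-U))/\zbb(S^\dcal(A-U)),
$$
and passage to homology gives the excision isomorphism of Part 2.
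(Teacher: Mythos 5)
Your proof is correct and follows essentially the same route as the paper: Parts 1, 4, and 5 are formal, and Parts 2 and 3 are obtained by observing that the classical prism, barycentric subdivision, and cone operators are built from affine maps into $\Delta^p$ and $\Delta^p\times I$, hence produce smooth simplices by Axiom 2 and Lemma \ref{delta}(1). The only cosmetic difference is that the paper phrases this as restricting the standard chain map and chain homotopies on $\zbb S\widetilde{X}$ to the subcomplex $\zbb S^{\dcal}X$, whereas you construct them directly on $\zbb S^{\dcal}X$.
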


\begin{proof}
	Parts 1, 4, and	5 are easily verified. For the proof of Parts 2 and 3, we note that $\zbb S^{\dcal}X$ is a chain subcomplex of $\zbb S\widetilde{X}$ (Lemma \ref{adjoint}(3)), and verify that the relevant chain map and chain homotopies on $\zbb S\widetilde{X}$ restricts to ones on $\zbb S^{\dcal}X$.	
	\item[(3)] To show that $\tilde{f}_\sharp \simeq \tilde{g}_\sharp:\zbb S\widetilde{X} \longrightarrow \zbb S\widetilde{Y}$, we use the chain homotopy $D$ on $\zbb S\widetilde{X}$ defined using the prism decomposition of $\Delta^p \times I$ (\cite[p. 45]{G}), which restricts to a chain homotopy on $\zbb S^\dcal X$ by Lemma \ref{delta}(1).
	\item[(2)] To verify the excision axiom for the topological singular homology, we use the chain map $Sd$ on $\zbb S\widetilde{X}$ and the chain homotopy $T$ on $\zbb S\widetilde{X}$ connecting $1_{\zbb S\widetilde{X}}$ and $Sd$, where $Sd$ and $T$ are defined using the barycentric subdivision of $\Delta^p$ and the triangulation of $\Delta^p \times I$ connecting the trivial triangulation of $\Delta^p$ and the barycentric subdivision of $\Delta^p$ (\cite[Section 17 in Chapter IV]{Br}). Thus, $Sd$ and $T$ restricts to a chain map and a chain homotopy on $\zbb S^{\dcal}X$ respectively by Proposition \ref{axioms}(Axiom 2) and Lemma \ref{delta}(1).
\end{proof}
The Mayer-Vietoris exact sequence and the homology exact sequence of a triple are also deduced from Proposition \ref{homologytheory} via the usual arguments (cf. \cite[p. 37]{Selick}); these exact sequences can be also derived from the consideration of the singular chain complexes.
\par\indent
For a $\zbb$-module $M$ and $n \geq 0$, let $M[n]$ denote the graded module with $M[n]_n=M$ and $M[n]_i=0$ ($i\neq n$).

\if0
\begin{lem}\label{homologicallemma}
	Let $b$ denote the barycenter of $\Delta^p$. Then
	\[
	H_{\ast}(\Delta^p,\Delta^p-\{b\})\cong H_{\ast}(\Delta^p,\dot{\Delta}^p)\cong\zbb[p].
	\]
	\begin{proof}
		The first ismorphism follows from Lemma \ref{delta}(3) and Propositions \ref{homologytheory}(3). Define the reduced homology $\widetilde{H}_\ast(X)$ of a diffeological space $X$ in the same manner as the reduced homology of a topological space (cf. \cite[p. 38]{Selick}). Since $\Delta^p$ is contractible in $\dcal$ (Lemma \ref{delta}(2)), we have only to show that $\widetilde{H}_\ast(\dot{\Delta}^p) \cong \zbb[p-1]$ (see Proposition \ref{homologytheory}(1)). \\ \\
		Consider the covering $\{\dot{\Delta}^p-\{b_0 \}, \Delta^{p-1}_{(0)} \}$ of $\dot{\Delta}^p$, where $b_0$ denotes the barycenter of the $0^{th}$ face $\Delta^{p-1}_{(0)}$, and observe from Proposition \ref{axioms} (Axiom 2 or 3) that $\Delta^{p-1}_{(0)}$ is diffeomorphic to $\Delta^{p-1}$. Using Lemma \ref{delta}, we then have
		\begin{center}
			$(\dot{\Delta}^p - \{b_0 \}) \cap \Delta^{p-1}_{(0)} = \Delta^{p-1}_{(0)} - \{b_0 \} \simeq \dot{\Delta}^{p-1}$,
		\end{center}
		\begin{center}
			$\dot{\Delta}^p - \{b_0 \} \simeq \Lambda^{p}_{0} \simeq \ast$ 
		\end{center}
		hold in $\dcal$; for the $\dcal$-contractibility of $\Delta^p - \text{\{} b_0 \text{\}}$, see the argument in Step 1 in the proof of \cite[Proposition 7.1]{origin}. Then we have the isomorphisms
		$$
		\widetilde{H}_\ast(\dot{\Delta}^p) \cong \widetilde{H}_{\ast-1}(\dot{\Delta}^{p-1})\cong \cdots \cong \widetilde{H}_{\ast -p+1}(\dot{\Delta}^1)\cong \zbb[p-1] 
		$$
		by the Mayer-Vietoris exact sequence.\hfill \qed \\ \\
		We have proven all the results on the singular homology which are used in the proof of Theorem \ref{Quillenequiv}(1). The following are additional remarks.\color{white}
	\end{proof}
\end{lem}
\fi
\begin{lem}\label{homologicallemma}
	Let $b$ denote the barycenter of $\Delta^p$. Then
	\[
	H_{\ast}(\mathring{\Delta}^p,\mathring{\Delta}^p-\{b\})\cong\zbb[p].
	\]
	\begin{proof}
		Define the reduced homology $\widetilde{H}_\ast(X)$ of a diffeological space $X$ in the same manner as the reduced homology of a topological space (cf. \cite[p. 38]{Selick}). Since $\mathring{\Delta}^p$ is contractible in $\dcal$ (Lemma \ref{delta}(3)), we have only to show that $\widetilde{H}_\ast(\mathring{\Delta}^p - \{b\}) \cong \zbb[p-1]$ (see Proposition \ref{homologytheory}(1)).
		\par\indent
		Since $\mathring{\Delta}^p - \{b\}$ contains a diffeological subspace which is diffeomorphic to $S^{p-1}$ as a deformation retract in $\dcal$, we see that $\widetilde{H}_\ast(\mathring{\Delta}^p - \{b\}) \cong \widetilde{H}_\ast (S^{p-1}) \cong \zbb [p-1]$ by Proposition \ref{homologytheory} and the Mayer-Vietoris exact sequence.
	\end{proof}
\end{lem}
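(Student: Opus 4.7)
The plan is to apply the long exact sequence of the diffeological pair $(\mathring{\Delta}^p, \mathring{\Delta}^p - \{b\})$ from Proposition \ref{homologytheory}(1) and reduce the computation to that of $\widetilde{H}_*(S^{p-1})$. After introducing the reduced singular homology $\widetilde{H}_*$ of a diffeological space in the standard way, observe that by Lemma \ref{delta}(3), $\mathring{\Delta}^p$ carries the sub-diffeology of $\mathbb{A}^p$ and is an open convex subset of $\mathbb{A}^p \cong \mathbb{R}^p$; hence it is $\dcal$-contractible, and the homotopy axiom (Proposition \ref{homologytheory}(3)) gives $H_n(\mathring{\Delta}^p) = 0$ for $n > 0$. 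The long exact sequence then collapses to isomorphisms $H_n(\mathring{\Delta}^p, \mathring{\Delta}^p - \{b\}) \cong \widetilde{H}_{n-1}(\mathring{\Delta}^p - \{b\})$ for all $n \geq 1$.

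Next I would exhibit $S^{p-1}$ as a $\dcal$-deformation retract of $\mathring{\Delta}^p - \{b\}$. Again using Lemma \ref{delta}(3) to identify $\mathring{\Delta}^p$ diffeologically with an open convex subset of $\mathbb{R}^p$, the standard radial projection onto a small Euclidean sphere $S_\epsilon$ centered at $b$ is smooth (since the involved norm function and scalar multiplication are smooth away from $b$), and the linear straight-line homotopy $H(x,t) = (1-t)x + t\pi(x)$ is an affine-in-$t$ smooth map $(\mathring{\Delta}^p - \{b\}) \times I \to \mathring{\Delta}^p - \{b\}$. Thus $S_\epsilon \cong S^{p-1}$ is a $\dcal$-deformation retract, and Proposition \ref{homologytheory}(3) yields $\widetilde{H}_*(\mathring{\Delta}^p - \{b\}) \cong \widetilde{H}_*(S^{p-1})$.

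Finally I would compute $\widetilde{H}_*(S^{p-1})$ by induction on $p$, using the Mayer--Vietoris exact sequence (obtained from Proposition \ref{homologytheory} via the standard derivation). Cover $S^{p-1}$ by two slightly enlarged open caps, each $\dcal$-diffeomorphic to an open disk and hence $\dcal$-contractible, with intersection $\dcal$-homotopy equivalent to an equatorial $S^{p-2}$; together with the base case $\widetilde{H}_*(S^0) \cong \mathbb{Z}[0]$, this gives $\widetilde{H}_*(S^{p-1}) \cong \mathbb{Z}[p-1]$ and therefore $H_*(\mathring{\Delta}^p, \mathring{\Delta}^p - \{b\}) \cong \mathbb{Z}[p]$. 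The main subtlety throughout is to ensure that every contraction, deformation retract, and cover is genuinely a morphism of $\dcal$ rather than merely continuous; this is exactly what Lemma \ref{delta}(3) provides, since it lets us import the standard Euclidean constructions directly into the diffeological setting on the open stratum $\mathring{\Delta}^p$.
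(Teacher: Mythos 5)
Your proposal is correct and follows essentially the same route as the paper: reduce via the exact sequence of the pair and the $\dcal$-contractibility of $\mathring{\Delta}^p$ (both resting on Lemma \ref{delta}(3), which identifies $\mathring{\Delta}^p$ with an open convex subset of $\rbb^p$ with its standard diffeology) to $\widetilde{H}_\ast(\mathring{\Delta}^p-\{b\})$, exhibit a sphere as a $\dcal$-deformation retract, and compute $\widetilde{H}_\ast(S^{p-1})$ by Mayer--Vietoris. You merely make explicit the details (radial projection, straight-line homotopy, two-cap cover) that the paper leaves implicit.
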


Let us introduce the notions of a $CW$-complex in $\dcal$ and its cellular homology.
\par\indent
Recall that $\dcal$ is a cofibrantly generated model category having $\ical = \{\dot{\Delta}^p \longhookrightarrow \Delta^p\ |\ p \geq 0\}$ as the set of generating cofibrations. A $CW$-complex in $\dcal$, introduced in Definition \ref{CW} is a special type of an $\ical$-cell complex.
\begin{defn}\label{CW}
	A {\sl $CW$-complex} $X$ in $\dcal$ is a diffeological space $X$ which is the colimit of a sequence in $\dcal$
	$$
	X^0\overset{i_1}{\longhookrightarrow}X^1\overset{i_2}{\longhookrightarrow}X^2\overset{i_3}{\longhookrightarrow}\cdots
	$$
	such that each $i_n$ fits into a pushout diagram of the form
	\begin{center}
		\begin{tikzcd}
			\underset{\lambda\in\Lambda_n}{\coprod} \dot{\Delta}^n_\lambda \arrow{d} \arrow[hook]{r} & \underset{\lambda\in\Lambda_n}{\coprod} \Delta^n_\lambda \arrow{d}\\
			X^{n-1} \arrow[hook]{r}{i_n} & X^n.
		\end{tikzcd}
	\end{center}
\end{defn}
Note that $CW$-complexes in $\dcal$ are cofibrant objects in $\dcal$. Let us see that the realizations of simplicial sets form an important class of $CW$-complexes in $\dcal$.

\begin{lem}\label{realization}
	Let $K$ be a simplicial set. Then, the realization $|K|_\dcal$ is a $CW$-complex in $\dcal$ having one $n$-cell for each non-degenerate $n$-simplex of $K$.
	\begin{proof}
		Let $K^n$ denote the $n$-skeleton of $K$. Then, $K$ is the colimit of the sequence
		$$
		K^0 \xhookrightarrow{\ \ i_1\ \ } K^1 \xhookrightarrow{\ \ i_2\ \ } K^2 \xhookrightarrow{\ \ i_3\ \ } \cdots
		$$
		and the pushout diagram
		\begin{center}
			\begin{tikzcd}
				\underset{\lambda \in NK_n}{\coprod} \dot{\Delta}[n]_\lambda \arrow{d} \arrow[hook]{r} & \underset{\lambda \in NK_n}{\coprod} \Delta[n]_\lambda \arrow{d}\\
				K^{n-1} \arrow[hook]{r}{i_n} & K_n
			\end{tikzcd}
		\end{center}
		exists for $n \geq 0$, where $NK_n$ is the set of non-degenerate $n$-simplices of $K$. Applying the realization functor $|\ |_\dcal$, we obtain the presentation of the $CW$-structure of $|K|_\dcal$ (see Lemma \ref{adjoint}(1), Proposition \ref{axioms}(Axiom 3) and \cite[p. 8]{GJ}).
	\end{proof}
\end{lem}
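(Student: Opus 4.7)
The plan is to transport the standard skeletal presentation of $K$ as a simplicial set through the realization functor $|\ |_\dcal$, using that this functor is a left adjoint and hence preserves colimits.

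First, I would recall the standard fact (\cite[p. 8]{GJ}) that every simplicial set $K$ is the colimit of its skeletal filtration $K^0 \hookrightarrow K^1 \hookrightarrow K^2 \hookrightarrow \cdots$, and that each inclusion $K^{n-1} \hookrightarrow K^n$ fits into a pushout square
\[
\begin{CD}
\coprod_{\sigma \in NK_n} \dot{\Delta}[n]_\sigma @>>> \coprod_{\sigma \in NK_n} \Delta[n]_\sigma \\
@VVV @VVV \\
K^{n-1} @>>> K^n
\end{CD}
\]
indexed by the non-degenerate $n$-simplices of $K$, with the left-hand map encoding the boundary attachments.

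Second, I would apply $|\ |_\dcal$ to both the sequential diagram and the pushout squares. By Lemma \ref{adjoint}(1) the functor $|\ |_\dcal$ is a left adjoint, so it preserves these colimits. Since $|\Delta[n]|_\dcal = \Delta^n$ by definition, and the canonical map $|\dot{\Delta}[n]|_\dcal \longhookrightarrow \Delta^n$ is a $\dcal$-embedding by Axiom 3 of Proposition \ref{axioms}, we may identify $|\dot{\Delta}[n]|_\dcal$ with $\dot{\Delta}^n$ as a diffeological subspace of $\Delta^n$. The resulting pushout squares in $\dcal$ then match exactly the form required by Definition \ref{CW}, exhibiting $|K|_\dcal$ as a $CW$-complex with one $n$-cell per non-degenerate $n$-simplex of $K$.

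The only step that is not purely formal is the identification $|\dot{\Delta}[n]|_\dcal \cong \dot{\Delta}^n$ inside $\Delta^n$, and this is precisely what Axiom 3 provides; coproducts of $\dcal$-embeddings are again $\dcal$-embeddings (as initial morphisms with respect to the underlying set functor, by Proposition \ref{conven}(1)), so the attaching maps in the pushout diagrams retain the required form. Beyond this point the proof is bookkeeping, and the main conceptual input is entirely the existence of a good diffeology on $\Delta^n$ compatible with its subcomplex structure.
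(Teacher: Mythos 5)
Your proposal is correct and follows essentially the same route as the paper: realize the skeletal filtration and its pushout presentation (\cite[p.~8]{GJ}) via the colimit-preserving left adjoint $|\ |_\dcal$ (Lemma \ref{adjoint}(1)), and use Axiom 3 (Proposition \ref{axioms}) to identify $|\dot{\Delta}[n]|_\dcal$ with $\dot{\Delta}^n$ inside $\Delta^n$. Your added remark that coproducts of $\dcal$-embeddings remain $\dcal$-embeddings is a sensible explicit bookkeeping point that the paper leaves implicit.
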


For a $CW$-complex $X$ in $\dcal$, define the {\sl cellular chain complex} $(C(X), \partial_C)$ by
\begin{center}
	\begin{equation*}
	\abovedisplayskip=-10pt   	
		\begin{split}
			C_n(X) =& H_n (X^n, X^{n-1}), \\
			\partial_C =& \text{the connecting homomorphism coming from the } \\
			&\text{homology exact sequence of the triple } (X^n, X^{n-1}, X^{n-2}).
		\end{split}
	\end{equation*}
\end{center}
(See \cite[p. 39]{Selick} for the verification of $\partial^2_C = 0$.) The homology $H_\ast C(X)$ is called the {\sl cellular homology} of $X$.

\begin{lem}\label{cellular}
	Let $X$ be a $CW$-complex in $\dcal$.
	\begin{itemize}
		\item[(1)] The $n^{th}$ component $C_n(X)$ is a free $\zbb$-module on $\{n \text{-cells of } X\}$.
		\item[(2)] $H_\ast C(X)$ is isomorphic to $H_\ast(X)$.
	\end{itemize}
	\begin{proof}
		Express $X$ as in Definition \ref{CW}, and let $b_\lambda$ denote the barycenter of $\Delta^n_\lambda$. The point of $X^n$ corresponding to $b_\lambda$ is also denoted by $b_\lambda$. From Lemma \ref{delta}(4) and \cite[Lemma 6.3(2) and 7.2]{origin}, we observe that
		\begin{center}
			\begin{tikzcd}
				X^n - \{b_\lambda | \lambda \in \Lambda_n\} &=& X^{n-1} \underset{\underset{\lambda \in \Lambda_n}{\coprod}\dot{\Delta}^n_\lambda}{\cup} \underset{\lambda\in\Lambda_n}{\coprod} \Delta^n_\lambda - \{b_\lambda\} \nonumber \\[-7mm]
				&\simeq& X^{n-1}. \ \ \ \ \ \ \ \ \ \ \ \ \ \ \ \ \ \ \ \ \ \ \ \ \ \ \ \ \ 
			\end{tikzcd}
		\end{center}
		By Proposition \ref{homologytheory} and Lemma \ref{homologicallemma}, we can prove the result in the same manner as in the topological case (see the proof of \cite[Theorem 5.4.2]{Selick}).
	\end{proof}
\end{lem}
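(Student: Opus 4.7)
The plan is to mimic the standard cellular-homology argument for topological CW-complexes, using homotopy invariance, excision, additivity, and dimension from Proposition \ref{homologytheory} together with the local computation of Lemma \ref{homologicallemma}; this is the same outline followed in \cite[Theorem 5.4.2]{Selick}.

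For Part 1, I would first invoke the cellwise deformation retracts from Lemma \ref{delta}(4), extended by the identity on $X^{n-1}$, to obtain $X^n - \{b_\lambda \mid \lambda \in \Lambda_n\} \simeq X^{n-1}$ in $\dcal$, as recorded just before the lemma statement. Applying the homotopy axiom (Proposition \ref{homologytheory}(3)) and the five lemma to the long exact sequences of the pairs $(X^n, X^{n-1})$ and $(X^n, X^n - \{b_\lambda\}_\lambda)$ gives $H_\ast(X^n, X^{n-1}) \cong H_\ast(X^n, X^n - \{b_\lambda\}_\lambda)$. Excising $X^{n-1}$ from both members of the latter pair via Proposition \ref{homologytheory}(2) is legitimate since $X^{n-1}$ is closed in $X^n$ while the $b_\lambda$ lie in the open complement $X^n - X^{n-1}$; the pushout description in Definition \ref{CW} identifies the remaining pair with $\bigl(\coprod_\lambda \mathring{\Delta}^n_\lambda,\, \coprod_\lambda (\mathring{\Delta}^n_\lambda - \{b_\lambda\})\bigr)$. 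Additivity (Proposition \ref{homologytheory}(5), extended to pairs via the five lemma) together with Lemma \ref{homologicallemma} then yields $H_\ast(X^n, X^{n-1}) \cong \bigoplus_\lambda \zbb[n]$, which is Part 1 and simultaneously supplies the crucial vanishing $H_p(X^n, X^{n-1}) = 0$ for $p \neq n$.

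For Part 2, this vanishing reduces the argument to a purely formal diagram chase on the long exact sequences of the triples $(X^n, X^{n-1}, X^{n-2})$. An induction on $n$ gives $H_p(X^n) = 0$ for $p > n$ and $H_p(X^n) \cong H_p(X^{n-1})$ for $p < n-1$, and, by construction, the connecting homomorphism $H_n(X^n, X^{n-1}) \to H_{n-1}(X^{n-1}, X^{n-2})$ coincides with the cellular differential $\partial_C$. Splicing the sequences identifies $H_n C(X)$ with $H_n(X^{n+1})$, and then with $H_n(X^{n+k})$ for every $k \geq 0$.

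The main obstacle I anticipate is the final passage from $H_\ast(X^n)$ to $H_\ast(X)$ for the sequential colimit $X = \mathrm{colim}_n X^n$, which requires that every smooth singular simplex $\sigma : \Delta^p \to X$ factor through some finite skeleton $X^n$. In the topological setting this is automatic from compactness of $\Delta^p$; here it should follow from topological compactness of $\Delta^p$ (Axiom 1) combined with the fact that each $i_n$ is a pushout of a $\dcal$-embedding, hence itself a $\dcal$-embedding, and that such embeddings are stable under transfinite composition (Proposition \ref{conven}(1)). Consequently the underlying topological skeleta form a closed filtration of $\tilde X$ into which $\sigma(\Delta^p)$ lands at some finite stage, and the corestriction is automatically smooth by the initial-morphism characterization of $\dcal$-embeddings. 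Once this skeleton-factorization is in hand, $S^\dcal X = \mathrm{colim}_n S^\dcal X^n$ and $H_\ast(X) \cong \mathrm{colim}_n H_\ast(X^n)$, completing the identification $H_\ast C(X) \cong H_\ast(X)$.
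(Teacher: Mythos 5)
Your proposal is correct and follows essentially the same route as the paper: the paper's proof likewise uses the deformation retract $X^n - \{b_\lambda \mid \lambda \in \Lambda_n\} \simeq X^{n-1}$ from Lemma \ref{delta}(4), then invokes Proposition \ref{homologytheory} and Lemma \ref{homologicallemma} to run the standard excision computation of $H_\ast(X^n,X^{n-1})$ and the usual diagram chase, citing Selick's Theorem 5.4.2 for the details you have spelled out. The colimit/skeleton-factorization issue you flag at the end is indeed the one point the topological argument does not hand you for free, and your resolution via Axiom 1 and the stability of $\dcal$-embeddings under pushout and transfinite composition is the intended one.
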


\begin{prop}\label{homologyisone}
	For a simplicial set $K$, there exists a natural isomorphism of graded modules
	$$
	H_\ast (K) \cong H_\ast (|K|_\dcal).
	$$
	\begin{proof}
		Since $(|\ |_\dcal, S^\dcal)$ is an adjoint pair (Lemma \ref{adjoint}(1)), we have the natural map $i_K : K \longrightarrow S^\dcal |K|_\dcal$, which induces the homomorphism
		$$
		H_\ast (K) \xrightarrow{\ \ i_{K_\ast}\ \ } H_\ast (S^\dcal |K|_\dcal) = H_\ast (|K|_\dcal). 
		$$
		With the help of Lemma \ref{cellular}, we can show that $i_{K_\ast}$ is an ismorphism by an argument similar to that in the proof of Proposition 16.2(i) in \cite{May}.
	\end{proof}
\end{prop}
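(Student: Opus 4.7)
The plan is to show $i_{K*}$ is an isomorphism by induction on the skeleton filtration $K = \colim_n K^n$, using the Five Lemma. For the base case $n=0$, $K^0$ is a discrete set and $|K^0|_\dcal$ is a coproduct of points by Lemma \ref{realization}, so both sides reduce to $\zbb\{K_0\}$ in degree zero by Proposition \ref{homologytheory}(4)(5), with $i_*$ matching them by naturality.

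For the inductive step, I would compare the long exact sequence of the simplicial pair $(K^n, K^{n-1})$ with that of the diffeological pair $(|K^n|_\dcal, |K^{n-1}|_\dcal)$ coming from Proposition \ref{homologytheory}(1). The inductive hypothesis handles the absolute terms, so the Five Lemma reduces the problem to showing that the relative comparison $H_*(K^n, K^{n-1}) \to H_*(|K^n|_\dcal, |K^{n-1}|_\dcal)$ is an isomorphism. Both sides vanish outside degree $n$ and are free on the set $NK_n$ of non-degenerate $n$-simplices: the simplicial side is the standard normalized-chains identification, while the diffeological side is Lemma \ref{cellular}(1) applied to the CW-structure on $|K|_\dcal$ furnished by Lemma \ref{realization}, whose $n$-cells are indexed precisely by $NK_n$. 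By naturality, $i_*$ sends the basis class of a non-degenerate $\sigma: \Delta[n] \to K$ to the cellular generator corresponding to its characteristic map $\Delta^n \to |K^n|_\dcal$, yielding the desired isomorphism.

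Finally, to pass to the full simplicial set $K = \colim_n K^n$, both $H_*(K)$ and $H_*(|K|_\dcal)$ must be expressed as the filtered colimit of their finite-skeletal analogues. The simplicial side is routine. For the diffeological side, one verifies that every smooth plot $\Delta^p \to |K|_\dcal$ factors through some $|K^n|_\dcal$: its underlying continuous map has source $\widetilde{\Delta^p}$, which is compact by Axiom 1 of Proposition \ref{axioms}, so it lands in a finite subcomplex of $\widetilde{|K|_\dcal} = |K|$ by Lemma \ref{adjoint}(2); the factorization is smooth because each inclusion $|K^n|_\dcal \hookrightarrow |K|_\dcal$ is a $\dcal$-embedding, being built from iterated pushouts and transfinite composites of $\dcal$-embeddings (Proposition \ref{conven}(1)). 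I expect this final colimit-compatibility step to be the main obstacle; the remaining ingredients follow mechanically from the CW-theoretic machinery developed in Lemmas \ref{realization} and \ref{cellular}.
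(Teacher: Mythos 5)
Your proof is correct and follows essentially the same route as the paper, which simply defers to May's Proposition 16.2(i): both arguments rest on the CW-structure of $|K|_\dcal$ from Lemma \ref{realization} and the cellular-versus-singular comparison of Lemma \ref{cellular}, your five-lemma induction on skeleta being the standard unpacking of that comparison. The colimit-compatibility step you single out as the main obstacle is precisely the plot-factorization fact the paper invokes elsewhere as Lemma 9.6 of the prequel (see the proofs of Lemmas \ref{1-connected} and \ref{R}), and your compactness-plus-$\dcal$-embedding argument for it is the intended one.
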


We have proven all the results on the singular homology and the realization of a simplicial set which are used in the proof of Theorem \ref{Quillenequiv}(1). The following are additional remarks.

\begin{rem}\label{cohomology} (1) Let $\pi$ be an abelian group. For a diffeological pair $(X, A)$, the {\sl singular homology} $H_\ast(X, A;\pi)$ {\sl with coefficients in} $\pi$ is defined by
	$$
	H_\ast(X, A;\pi) = H_\ast(\zbb S^\dcal X/\zbb S^\dcal A\otimes \pi ).
	$$
	Then, Proposition \ref{homologytheory} generalizes to the case of a general coefficient group $\pi$ (see \cite[p. 183]{Br} and the proof of Proposition \ref{homologytheory}).
	\par\indent
	For a diffeological pair $(X, A)$, the {\sl singular cohomology} $H^{\ast}(X, A;\pi)$ {\sl with coefficients in} $\pi$ is defined by 
	$$
	H^{\ast}(X, A;\pi) = H^{\ast} \Hom(\zbb S^{\dcal}X/\zbb S^{\dcal}A, \pi),
	$$	
	which has desirable properties analogons to Proposition \ref{homologytheory} (see \cite[pp. 285-286]{Br} and the proof of Proposition \ref{homologytheory}).
	\item[(2)] The universal coefficient theorems obviously hold for the singular homology and cohomology of diffeological spaces (cf. \cite[pp. 281-285]{Br}). Since the Eilenberg-Zilber theorem holds for simplicial sets (\cite[Corollary 29.6]{May}), the Kunneth theorems hold for the singular homology and cohomology of diffeological spaces, and hence, the cross products on homology and cohomology, and the cup product on cohomology are defined and satisfy the same formulas as those in the topological case. 
	\item[(3)] Since the functor $S^{\dcal}$ preserves fibrations (Lemma \ref{generating}), the Serre spectral sequence works for fibrations in $\dcal$ (\cite[Section 32]{May}).
\end{rem}

\begin{rem}\label{coincide}
	Hector \cite{H} defined the singular complex of a diffeological space $X$ using $\Delta^p_\text{sub} (p \geq 0)$, which is denoted by $S^\dcal_\text{sub}(X)$. Iglesias-Zemmour defined the complex $\bvec{\mathsf{C}}_\star (X)$ of reduced groups of cubic chains for a diffeological space $X$, and called its homology the cubic homology of $X$ (\cite[pp. 182-183]{I}).
	\par\indent
	Similar to the topological case, we can see that $\zbb S^\dcal (X)$, $\zbb S^\dcal_\text{sub} (X)$, and $\bvec{\mathsf{C}}_\star (X)$ are naturally homotopy equivalent (see \cite{EM}), and hence that our and Hector's singular homologies and the cubic homology are naturally isomorphic (see the footnote of \cite[p. 183]{I}).
\end{rem}

\section{Quillen equivalences between $\scal$, $\dcal$, and $\ccal^0$}
In this section, we prove Theorem \ref{Quillenequiv}. 

\subsection{Proof of Part 1 of Theorem \ref{Quillenequiv}}
	We can reduce the proof of Theorem \ref{Quillenequiv}(1) by the following lemma on the Quillen pair $(|\ |_{\dcal}, S^{\dcal})$.
	\begin{lem}\label{1streduction}
		The following are equivalent:
		\begin{itemize}
			\item[(i)]
			$(|\ |_{D},S^{\mathcal{D}})$ is a pair of Quillen equivalences.
			\item[(ii)]
			For $K\in\scal$ and $X\in\mathcal{D}$, the natural maps
			\[
			i_K:K\longrightarrow S^{\mathcal{D}}|K|_{\mathcal{D}}\hbox{  and  } p_X:|S^{\mathcal{D}} X|_{\mathcal{D}}\longrightarrow X
			\]
			are weak equivalences in $\scal$ and $\mathcal{D}$ respectively.
			
			\item[(iii)]
			For $K\in\scal$, the natural map $i_K:K\longrightarrow S^{\mathcal{D}}|K|_{\mathcal{D}}$ is a weak equivalence in $\scal$.
		\end{itemize}
		\begin{proof}
			Note that every object of $\scal$ is cofibrant and that every object of $\dcal$ is fibrant. Then the implications (i)$\implies$(ii)$\implies$(iii) are obvious.\\
			(iii)$\implies$(i) Let $X$ be a diffeological space. For a map $f:K\longrightarrow S^{\mathcal{D}} X$, consider the commutative diagram
			\begin{center}
				\begin{tikzpicture}
				\node at (0,0)
				{\begin{tikzcd}[column sep=large]
					K \arrow{r}{f} \arrow[d,"i_K"'] & S^{\mathcal{D}} X \arrow[d,"i_{S^{\mathcal{D}} X}"] \\
					S^{\mathcal{D}} |K|_{\mathcal{D}} \arrow{r}{S^{\mathcal{D}}|f|_{\dcal}} \arrow[dr,"S^{\mathcal{D}} f\hat{\phantom{g}}"']& 
					S^{\mathcal{D}}|S^{\mathcal{D}} X \arrow[d,"{S^{\mathcal{D}} p_X}"]|_{\dcal}\\
					& S^{\mathcal{D}} X,
					\end{tikzcd}};
				\draw[->] (1.75,1.15)--(2.45,1.15)--(2.45,-1.15)--(1.75,-1.15);
				\node [right] at (2.5,0) {$1_{S^{\mathcal{D}} X}$};
				\end{tikzpicture}
			\end{center}
			where $f\hat{\phantom{g}}:|K|_{\mathcal{D}}\longrightarrow X$ is the left adjunct of $f:K\longrightarrow S^{\mathcal{D}} X$. Since $i_K$ is a weak equivalence, we have the equivalences
			\begin{center}
				\begin{tabular}{rcl}
					$f$ is a weak equivalence &$\Leftrightarrow$& $S^{\mathcal{D}} f\hat{\phantom{g}}$ is a weak equivalence\\
					&$\Leftrightarrow$& 
					$f\hat{\phantom{g}}$ is a weak equivalence,
				\end{tabular}
			\end{center}
			which shows that $(|\ |_{\mathcal{D}},S^{\mathcal{D}})$ is a pair of Quillen equivalences.
		\end{proof}
	\end{lem}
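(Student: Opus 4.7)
The plan is to establish the cyclic chain of implications (i) $\Rightarrow$ (ii) $\Rightarrow$ (iii) $\Rightarrow$ (i). Two structural facts are used throughout: every simplicial set is cofibrant in $\scal$, and every diffeological space is fibrant in $\dcal$ by Theorem \ref{originmain}. Consequently, the derived unit and counit of the Quillen pair $(|\ |_\dcal, S^\dcal)$ are represented by the honest natural transformations $i_K$ and $p_X$, with no need for cofibrant/fibrant replacements.

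For (i) $\Rightarrow$ (ii), I would invoke the standard characterization of a Quillen equivalence: a Quillen pair $(F,G)$ is a Quillen equivalence if and only if, for every cofibrant $A$ and every fibrant $Y$, the unit $A \to GFA$ and counit $FGY \to Y$ are weak equivalences. By the observation above, these are precisely $i_K$ and $p_X$, so (ii) follows. The implication (ii) $\Rightarrow$ (iii) is immediate.

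The substantive step is (iii) $\Rightarrow$ (i). Here I would use the other standard formulation of Quillen equivalence: it suffices to show that, for every $K \in \scal$, $X \in \dcal$, and map $f : K \to S^\dcal X$ with left adjunct $\hat f : |K|_\dcal \to X$, the map $f$ is a weak equivalence in $\scal$ if and only if $\hat f$ is a weak equivalence in $\dcal$. The triangle identity for the adjunction expresses $f$ as the composite $f = (S^\dcal \hat f)\circ i_K$. By hypothesis (iii), $i_K$ is a weak equivalence, so the 2-out-of-3 property in $\scal$ gives that $f$ is a weak equivalence if and only if $S^\dcal \hat f$ is. By Definition \ref{WFC}(1), weak equivalences in $\dcal$ are precisely the morphisms whose image under $S^\dcal$ is a weak equivalence in $\scal$, so this last condition is equivalent to $\hat f$ being a weak equivalence in $\dcal$.

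I do not anticipate a serious obstacle: the lemma is a formal reduction that becomes essentially automatic once one combines the two key ingredients, namely that weak equivalences in $\dcal$ are detected by $S^\dcal$ (Definition \ref{WFC}(1)) and that every object of $\scal$ (resp.\ $\dcal$) is cofibrant (resp.\ fibrant). The only mild bookkeeping point is to choose the correct one of the several equivalent formulations of Quillen equivalence at each step of the cycle.
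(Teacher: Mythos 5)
Your proposal is correct and follows essentially the same route as the paper: both reduce (iii)$\implies$(i) to the identity $f = (S^{\dcal}\hat f)\circ i_K$ (the paper displays this via the naturality square for $i$ together with the triangle identity $S^{\dcal}p_X \circ i_{S^{\dcal}X} = 1_{S^{\dcal}X}$), then apply two-out-of-three and the fact that $S^{\dcal}$ detects weak equivalences by Definition \ref{WFC}(1). The observations that all objects of $\scal$ are cofibrant and all objects of $\dcal$ are fibrant are used in both arguments in exactly the same way.
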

	
	Thus, we have only to show that for $K\in\scal$, the natural map $i_K:K\longrightarrow S^{\mathcal{D}}|K|_{\mathcal{D}}$ is a weak equivalence in $\scal$. Since $|\ |_\dcal$ and $S^\dcal$ preserve coproducts, we may assume that $K$ is connected.
	
	Let us further reduce the proof of Part 1 of Theorem \ref{Quillenequiv} to simpler cases.
	
	\begin{lem}\label{2ndreduction}
		Consider the following conditions:
		\begin{itemize}
			\item[(i)]
			$f:K\longrightarrow K'$ is a weak equivalence in $\scal$.
			\item[(ii)]
			$|f|_{\dcal}:|K|_{\dcal}\longrightarrow|K'|_{\dcal}$ is a weak equivalence in $\mathcal{D}$.
			\item[(iii)]
			$S^{\mathcal{D}}|f|_{\mathcal{D}}:S^{\mathcal{D}}|K|_{\mathcal{D}}\longrightarrow S^{\mathcal{D}}|K'|_{\mathcal{D}}$ is a weak equivalence in $\scal$.
		\end{itemize}
		Then, the implications (i)$\implies$(ii)$\implies$(iii) hold.
		\begin{proof}
			Since every object of $\scal$ is cofibrant, the implication $(i) \Longrightarrow (ii)$ holds by Lemma \ref{Quillenpairs} and \cite[Corollary 7.7.2]{Hi}. The implication $(ii) \Longrightarrow (iii)$ is immediate from the definition of a weak equivalence in $\dcal$.
		\end{proof}
	\end{lem}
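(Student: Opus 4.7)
The plan is to prove the two implications by invoking, respectively, a general property of left Quillen functors and the very definition of weak equivalence in $\dcal$. Neither step requires any new input: the work has already been done in establishing Lemma \ref{Quillenpairs} and in setting up Definition \ref{WFC}.

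For the implication (i)$\implies$(ii), the key point is that $|\ |_\dcal : \scal \rightleftarrows \dcal : S^\dcal$ is a Quillen pair (Lemma \ref{Quillenpairs}), so its left adjoint $|\ |_\dcal$ is a left Quillen functor. By Ken Brown's lemma (\cite[Corollary 7.7.2]{Hi}), a left Quillen functor sends weak equivalences between cofibrant objects to weak equivalences. Since every simplicial set is cofibrant in the standard model structure on $\scal$, both $K$ and $K'$ are cofibrant, and therefore $|f|_\dcal$ is a weak equivalence in $\dcal$ whenever $f$ is a weak equivalence in $\scal$.

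For the implication (ii)$\implies$(iii), recall that Definition \ref{WFC}(1) declares a morphism $g$ in $\dcal$ to be a weak equivalence precisely when $S^\dcal g$ is a weak equivalence in $\scal$. Applied to $g = |f|_\dcal$, this is exactly the statement that $S^\dcal |f|_\dcal$ is a weak equivalence in $\scal$, so the implication is immediate.

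I do not anticipate any genuine obstacle here; the lemma is essentially a bookkeeping step that packages two well-known facts (Ken Brown's lemma and the definition of weak equivalence transported along $S^\dcal$) into a convenient form that will subsequently be used to reduce the proof of Theorem \ref{Quillenequiv}(1) to checking $i_K$ only on a judiciously chosen class of simplicial sets $K$.
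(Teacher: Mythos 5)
Your proof is correct and follows exactly the same route as the paper: implication (i)$\implies$(ii) via Lemma \ref{Quillenpairs} and Ken Brown's lemma (\cite[Corollary 7.7.2]{Hi}) together with cofibrancy of all simplicial sets, and implication (ii)$\implies$(iii) directly from Definition \ref{WFC}(1). No gaps.
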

	For a connected simplicial set $K$, consider weak equivalences in $\scal$
	$$
	K \ \hookrightarrow \ L \ \hookleftarrow \ M,
	$$
	where $L$ is a fibrant approximation of $K$ and $M$ is a minimal subcomplex of $L$ (\cite[$\mathsection$9]{May}). Then, by Lemma \ref{2ndreduction}, we have only to show that for a Kan complex $K$ with only one vertex, 
	$i_{K}: K \longrightarrow S^{\dcal}|K|_{\dcal}$ is a weak equivalence in $\scal$.
	
	A simplicial map $\varpi:L\longrightarrow K$ is called a {\sl covering} if $\varpi$ is a fiber bundle with discrete fiber. For a Kan complex $K$ with only one vertex, a covering $\varpi:\tilde{K}\longrightarrow K$ with $\tilde{K}$ $1$-connected is constructed in a natural way, which is called the {\sl universal covering} of $K$ (\cite[Definition 16.4]{May}).
	
	A smooth map $p:E\longrightarrow B$ is called a {\sl fiber bundle} if any $b\in B$ has an open neighborhood $U$ such that $p^{-1}(U)\cong U\times F$ over $U$ for some $F\in\mathcal{D}$. A smooth map $p:E\longrightarrow B$ is called a {\sl covering} if $p$ is a fiber bundle with discrete fiber.
	
	\begin{lem}\label{covering}
		Let $K$ be a Kan complex with only one vertex, and let $\varpi:\tilde{K}\longrightarrow K$ be the universal covering of $K$.
		\begin{itemize}
			\item[(1)]
			$|\varpi|_{\mathcal{D}}:|\tilde{K}|_{\mathcal{D}}\longrightarrow |K|_{\mathcal{D}}$ is a covering.
			\item[(2)]
			$S^{\mathcal{D}}|\varpi|_{\mathcal{D}}:S^{\mathcal{D}}|\tilde{K}|_{\mathcal{D}}\longrightarrow S^{\mathcal{D}}|K|_{\mathcal{D}}$ is a covering.
		\end{itemize}
		\begin{proof}
			(1) We need the following three facts on $|\varpi|_{\dcal}: |\tilde{K}|_{\dcal} \longrightarrow |K|_{\dcal}$. \\ \\
			{\sl Fact 1: $|\varpi|_\dcal$ is a $\dcal$-quotient map.}
			\par\indent
			Recall that the realization $|L|_{\dcal}$ of a simplicial set $L$ is defined by $|L|_{\dcal} = \underset{\Delta\downarrow L}{\colim} \ \Delta^n$ (Section 2). Thus, we have the commutative diagram
			\begin{equation*}
				\begin{tikzcd}
					\underset{\mathrm{Ob}(\Delta\downarrow\widetilde{K})}{\coprod}{\Delta}^{n}\arrow{d}\arrow{r} & {|\tilde{K}|}_\dcal \arrow{d}{\mathrm{|\varpi|_\dcal}}\\
					\underset{\mathrm{Ob}(\Delta\downarrow K)}{\coprod}{\Delta}^{n}\arrow[r] & {|K|}_\dcal.
				\end{tikzcd}
			\end{equation*}
			consisting of the canonical projections, where $\underset{\mathrm{Ob}(\Delta\downarrow L)}{\coprod}\Delta^n$ denotes the coproduct of the standard simplices indexed by the objects of $\Delta\downarrow{L}$. Since the horizontal arrows and the left verical arrow are $\dcal$-quotient maps, $|\varpi|_\dcal$ is also a $\dcal$-quotient map. \\ \\
			
			\noindent{\sl Fact 2: Each $g \in \pi_1 (K)$ acts on $|\tilde{K}|_\dcal$ as an automorphism in the overcategory $\dcal\downarrow|K|_{\dcal}$.}
			\par\indent
			The desired action of $\pi_1 (K)$ is obtained from the obvious action of $\pi_1 (K)$ on $\tilde{K}$ via the realization functor. \\ \\
			
			\noindent{\sl Fact 3: $|\varpi|_{\dcal}$ is a topological covering.}
			\par\indent
			The fact follows from Lemma \ref{adjoint}(2) and \cite[Theorem 4.2 in Chapter III]{GZ}. \\
			\par\indent
			Let us prove the result using Facts 1-3.
			For $x\in |K|_\dcal$, choose an open neighborhood $U$ which is topologically weakly contractible (\cite[1.9 in Chapter III]{GZ}). Then
			$$
			|\varpi|_D : |\varpi|^{-1}_{\dcal}(U) \longrightarrow U
			$$
			is a $\dcal$-quotient map by Fact 1 and \cite[Lemma 2.4(2)]{origin}, and $|\varpi|^{-1}_{\dcal}(U)$ is isomorphic to $\underset{g\in\pi_{1} (K)}{\coprod}U_g$, where each $U_g$ is an open diffeological subspace of $|\tilde{K}|_\dcal$ which is topologically isomorphic to $U$ (Fact 3). Since each $g\in\pi_1 (K)$ acts on $|\varpi|^{-1}_{\dcal}(U)$ as an automorphism of $\dcal \downarrow U$ (Fact 2), each $U_g$ is diffeomorphic to $U$ via $|\varpi|_\dcal$.\\ \\
			(2) Let $\sigma:\Delta^n \longrightarrow |K|_\dcal$ be a singular simplex, and consider the following lifting problem
			$$ 
			\begin{tikzcd}
			& {|\tilde{K}|_\dcal} \arrow[d,"\mathrm{|\varpi|_\dcal}"]\\
			\Delta^n \arrow[ur,dashed] \arrow[r,"\sigma"]& {|K|_\dcal}.
			\end{tikzcd}
			$$
			By Proposition \ref{axioms}(Axiom 1) and Fact 3 in the proof of Part 1, there is a continuous solution $\tau: \Delta^n \longrightarrow |\tilde{K}|_\dcal$ and $\{\tau \cdot g \}_{g\in\pi_1 (K)}$ is the set of all continuous solutions. Part 1 shows that the elements $\tau \cdot g$ are smooth, which completes the proof.
		\end{proof}
	\end{lem}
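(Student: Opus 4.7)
The plan is to transport the classical theory of topological coverings to the diffeological setting by exploiting $\widetilde{|L|_\dcal} = |L|$ (Lemma \ref{adjoint}(2)) together with the $\dcal$-quotient nature of the realization functor. For Part (1), I would first establish three auxiliary facts: (a) $|\varpi|_\dcal$ is a $\dcal$-quotient map, (b) $\pi_1(K)$ acts on $|\tilde{K}|_\dcal$ by automorphisms in $\dcal\downarrow|K|_\dcal$, and (c) the underlying continuous map $|\varpi|$ is a topological covering. Fact (a) follows from the defining colimit $|L|_\dcal = \colim_{\Delta\downarrow L}\Delta^n$: in the commutative square relating $\coprod\Delta^n \to |\tilde{K}|_\dcal$ and $\coprod\Delta^n \to |K|_\dcal$, the two horizontal arrows and the left vertical coproduct arrow are $\dcal$-quotient maps, forcing the remaining arrow $|\varpi|_\dcal$ to be one as well. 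Fact (b) is obtained by applying $|\ |_\dcal$ to the canonical $\pi_1(K)$-action on $\tilde{K}$ as deck transformations. Fact (c) is the classical result on topological realizations of simplicial coverings, transferred via Lemma \ref{adjoint}(2).

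With these in hand, given $x \in |K|_\dcal$ I would choose a topologically weakly contractible open neighborhood $U$ of $x$. Then $|\varpi|^{-1}(U) \cong \coprod_{g \in \pi_1(K)} U_g$ topologically, with each $U_g$ an open subset of $|\tilde{K}|_\dcal$ mapped homeomorphically onto $U$. Since the restriction of $|\varpi|_\dcal$ over $U$ is still a $\dcal$-quotient map (Fact (a) applied to an open preimage of an open set) and $\pi_1(K)$ permutes sheets by smooth automorphisms over $|K|_\dcal$ (Fact (b)), every $U_g$ is diffeomorphic to $U$ via $|\varpi|_\dcal$, giving the smooth local trivialization.

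For Part (2), given a singular simplex $\sigma : \Delta^n \to |K|_\dcal$ of $|K|_\dcal$, the simple-connectivity of $\Delta^n$ (together with Axiom 1 identifying its underlying space with the topological simplex) and Fact (c) from Part (1) yield a continuous lift $\tau : \Delta^n \to |\tilde{K}|_\dcal$, with every other continuous lift of the form $\tau \cdot g$ for some $g \in \pi_1(K)$. Smoothness of $\tau$ then follows locally from Part (1): $|\varpi|_\dcal$ is a smooth covering, hence a local diffeomorphism, and $\tau$ is locally the composition of the smooth map $\sigma$ with a local smooth inverse of $|\varpi|_\dcal$. The same reasoning applies to each $\tau \cdot g$.

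I expect the principal obstacle to be the passage from a topological local homeomorphism to a smooth local diffeomorphism. A priori, we only know that the underlying continuous map $|\varpi|$ admits sheets; upgrading this to a trivialization in $\dcal$ requires both the $\dcal$-quotient property of $|\varpi|_\dcal$ (so that smoothness of an inverse sheet map can be detected by pulling back plots) and the sheet-permuting $\pi_1(K)$-action ensuring all sheets are simultaneously smoothly equivalent to $U$. Without the explicit colimit-of-simplices description of realization provided by our choice of $|\ |_\dcal$, this upgrade would have no obvious point of entry, and the argument would collapse back to a purely topological statement.
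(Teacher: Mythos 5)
Your proposal is correct and follows essentially the same route as the paper: the same three auxiliary facts ($\dcal$-quotient property via the colimit presentation, the realized $\pi_1(K)$-action, and the underlying topological covering), the same local trivialization over weakly contractible open sets, and the same lifting argument for Part (2). The only difference is cosmetic --- you spell out the local-diffeomorphism reasoning for smoothness of the lifts $\tau\cdot g$, which the paper leaves implicit in its appeal to Part (1).
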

	Consider the morphism between homotopy exact sequences induced by the morphism of coverings
	\begin{center}
		\begin{tikzcd}
			\pi \arrow[d] \arrow[r,"="'] &\pi \arrow[d]\\
			\tilde{K} \arrow[d,"\varpi"'] \arrow[r,"i_{\tilde{K}}"]& S^{\mathcal{D}}|\tilde{K}|_{\mathcal{D}} \arrow[d, "S^{\mathcal{D}}|\varpi|_{\mathcal{D}}"] \\
			K \arrow[r,"i_K"]& S^{\mathcal{D}} |K|_{\mathcal{D}}.
		\end{tikzcd}
	\end{center}
	For the proof of Theorem \ref{Quillenequiv}(1), we then have only to show that for a  1-connected Kan complex $K$, $i_{K}: K \longrightarrow S^{\dcal}|K|_{\dcal}$ is a weak equivalence in $\scal$.
	Thus, the following lemma and Proposition \ref{homologyisone} complete the proof of Theorem \ref{Quillenequiv}(1) via the Whitehead theorem (\cite[Theorem 13.9]{May}).
	
	\begin{lem}\label{1-connected}
		If $K$ is a $1$-connected Kan complex, then $S^{\mathcal{D}}|K|_{\mathcal{D}}$ is also a $1$-connected Kan complex.
		
		\begin{proof}
			By replacing $K$ with its minimal subcomplex, we may assume that $K$ has only one non-degenerate simplex of dimension $\leq 1$ (Lemma \ref{2ndreduction}). Then $|K|_{\mathcal{D}}$ is a diffeological space obtained from the $0$-simplex $\ast$ by attaching simplices of dimension $\geq 2$ (Lemma \ref{realization}). Since $\pi_{1}(S^{\dcal}|K|_{\dcal}) \cong \pi^{\dcal}_{1}(|K|_{\dcal})$ (Theorem \ref{homotopygp}), we can easily see that $\pi_{1}(S^{\dcal}|K|_{\dcal}) = 0$ by \cite[Lemma 9.6]{origin}, the transversality theorem (\cite[(14.7)]{BJ}), and the argument in the proof of Lemma \ref{cellular}.
		\end{proof}
	\end{lem}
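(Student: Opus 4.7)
The plan is to verify the two required properties separately. First, since every object of $\dcal$ is fibrant by Theorem \ref{originmain}, the unique morphism $|K|_\dcal \longrightarrow \ast$ is a fibration in $\dcal$. Applying Lemma \ref{generating}(1), I conclude that $S^\dcal|K|_\dcal \longrightarrow S^\dcal \ast = \Delta[0]$ is a Kan fibration, so $S^\dcal|K|_\dcal$ is automatically a Kan complex.

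For the $1$-connectedness of $S^\dcal|K|_\dcal$, I invoke Theorem \ref{homotopygp} to reduce the problem to showing $\pi^\dcal_0(|K|_\dcal) = 0$ and $\pi^\dcal_1(|K|_\dcal, \ast) = 0$. Using Lemma \ref{2ndreduction}, I may replace $K$ by a minimal subcomplex without changing these groups. Since $K$ is minimal and $1$-connected, it then has a single $0$-simplex $\ast$ and no non-degenerate $1$-simplex. By Lemma \ref{realization}, $|K|_\dcal$ is a $CW$-complex in $\dcal$ whose $1$-skeleton reduces to the single vertex $\ast$ and whose remaining cells all have dimension $\geq 2$.

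The vanishing of $\pi^\dcal_0(|K|_\dcal)$ is immediate: each cell $\Delta^n_\lambda$ with $n\geq 2$ is $\dcal$-contractible by Lemma \ref{delta}(2), so every point is $\dcal$-path-connected to the image of its cell's boundary, and an induction on the skeletal filtration reaches $\ast$. To kill $\pi^\dcal_1(|K|_\dcal,\ast)$, let $\gamma:\Delta^1 \longrightarrow |K|_\dcal$ be a smooth based loop. I would use \cite[Lemma 9.6]{origin} to confine the image of $\gamma$ to a finite skeleton $|K|_\dcal^n$ meeting only finitely many open $n$-cells. The transversality theorem \cite[(14.7)]{BJ}, applied to this finite collection of top-dimensional cells, permits a smooth perturbation of $\gamma$ (rel endpoints, within $|K|_\dcal^n$) whose image misses all the barycenters $\{b_\lambda\}$, since $\dim \Delta^1 < n$. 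The deformation retraction $|K|_\dcal^n - \{b_\lambda\} \simeq |K|_\dcal^{n-1}$ in $\dcal$ constructed in the proof of Lemma \ref{cellular}, by gluing the retractions $\Delta^n - \{b_n\} \simeq \dot{\Delta}^n$ of Lemma \ref{delta}(4), then pushes the perturbed $\gamma$ into $|K|_\dcal^{n-1}$. Iterating this reduction $n-1$ times brings $\gamma$ into the constant loop at $\ast$.

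The main obstacle is executing the transversal perturbation smoothly in the diffeological category: one must check that the version of transversality borrowed from \cite{BJ} (stated for $C^\infty$-manifolds) yields a perturbation that glues into a single smooth loop landing in the open subspace $|K|_\dcal^n - \{b_\lambda\}$. Once this is secured, the iterated skeletal collapse via the deformation retractions already collected in the proof of Lemma \ref{cellular} runs as a routine finite induction.
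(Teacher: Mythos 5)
Your proposal is correct and follows essentially the same route as the paper: reduce to a minimal $1$-connected Kan complex so that $|K|_\dcal$ is a $CW$-complex in $\dcal$ with one vertex and cells only in dimensions $\geq 2$, identify $\pi_1(S^\dcal|K|_\dcal)$ with $\pi_1^\dcal(|K|_\dcal)$ via Theorem \ref{homotopygp}, confine a smooth loop to a finite skeleton by \cite[Lemma 9.6]{origin}, push it off the barycenters by transversality, and collapse skeleta using the deformation retractions from the proof of Lemma \ref{cellular}. The smoothness issue you flag for the transversal perturbation is handled by Lemma \ref{delta}(3), which identifies the open cells with open subsets carrying the standard sub-diffeology, so ordinary $C^\infty$ transversality applies there.
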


\subsection{Proof of Part 2 of Thorem \ref{Quillenequiv}}
	In this subsection, we prove Part 2 of Theorem \ref{Quillenequiv}. We need the following lemma.
	\par\indent
	For a model category $\mcal$, $\mcal_c$ and $\mcal_f$ denote the full subcategories of $\mcal$ consisting of cofibrant objects and fibrant objects, respectively.
	
	\begin{lem}\label{Quillenpairreduction}
		Let $\acal$, $\bcal$, and $\ccal$ be model categories, and
		\begin{center}
			\begin{tikzpicture} 
			\node [below] at (0,0.25) {\begin{tikzcd}
				{\acal\ }\arrow[r, shift left, "F"]
				& \bcal \arrow[l, shift left, "G"] \arrow[r, shift left, "L"]
				& {\ \ccal} \arrow[l, shift left, "R"]
				\end{tikzcd}};
			\draw[->] (-1.5,0) -- (-1.5,0.25)--(1.5,0.25)-- (1.5,0);
			\draw[<-] (-1.5,-.5) -- (-1.5,-.75)--(1.5,-.75)-- (1.5,-0.5);
			\node at (0,.5) {$LF$};
			\node at (0,-1.) {$GR$};
			\end{tikzpicture}
		\end{center}
		a diagram of functors such that $(F,G)$ and $(L,R)$ are Quillen pairs. Suppose that $(F,G)$ and $(LF,GR)$ are pairs of Quillen equivalences, and that $\acal_c=\acal$ and $\bcal_f=\bcal$. Then $(L,R)$ is also a pair of Quillen equivalences.
		\begin{proof}
			Let $X$ be a cofibrant object of $\bcal$ and $U$ a fibrant object of $\ccal$. Let $f:X\longrightarrow RU$ be a morphism of $\bcal$. Then we show that $f:X\longrightarrow RU$ is a weak equivalence if and only if the left adjunct $f\ \hat{}:LX\longrightarrow U$ is a weak equivalence.\\
			\par\noindent
			{\sl Step 1: The case of $X=FA$ with $A\in\acal$.}
			
			Since $(F,G)$ is a pair of Quillen equivalences, $f:FA\longrightarrow RU$ is a weak equivalence if and only if the right adjunct $f\ \check{}: A\longrightarrow GRU$ is a weak equivalence. Since $(LF,GR)$ is a pair of Quillen equivalences, $f\ \check{}: A\longrightarrow GRU$ is a weak equivalence if and only if $f\ \hat{}: LFA\longrightarrow U$ is a weak equivalence.\\
			\par\noindent
			{\sl Step 2: The case of a general cofibrant object $X$.}
			
			Set $A=GX$. Then the canonical map $p_X:FA\longrightarrow X$, which is the left adjunct of $1_{GX}: A = GX \longrightarrow GX$, is a weak equivalence between cofibrant objects in $\bcal$, and hence, $Lp_X:LFA\longrightarrow LX$ is a weak equivalence by \cite[Corollary 7.7.2]{Hi}. Thus, we have the equivalences
			\begin{center}
				\begin{tabular}{rl}
					& $f:X\longrightarrow RU$ is a weak equivalence in $\bcal$ \\
					$\Longleftrightarrow$
					& the composite $FA\xrightarrow[]{\ p_X\ }X\xrightarrow[]{\ f\ }RU$ is a weak equivalence in $\bcal$ \\
					$\Longleftrightarrow$
					& the composite $LFA\xrightarrow[]{\ Lp_X\ }LX\xrightarrow[]{\ f\ \hat{}\ }U$ is a weak equivalence in $\ccal$ \\
					$\Longleftrightarrow$
					& $f\ \hat{}: LX\longrightarrow U$ is a weak equivalence in $\ccal$, 
				\end{tabular}
			\end{center}
			using Step 1.
		\end{proof}
	\end{lem}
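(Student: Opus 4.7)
The plan is to verify the adjoint characterization of a Quillen equivalence for $(L,R)$: for every cofibrant $X \in \bcal$ and fibrant $U \in \ccal$, a morphism $f : X \to RU$ is a weak equivalence in $\bcal$ if and only if its left adjunct $f^\wedge : LX \to U$ is a weak equivalence in $\ccal$. The conditions $\acal_c = \acal$ and $\bcal_f = \bcal$ are central to everything that follows, as they ensure that every object of $\acal$ is automatically cofibrant and every object of $\bcal$ is automatically fibrant, so that the adjoint characterizations of the Quillen equivalences $(F,G)$ and $(LF, GR)$ may be invoked freely without verifying side conditions.

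First, I would handle the special case $X = FA$ for some $A \in \acal$. Denote by $f^\vee : A \to GRU$ the right adjunct of $f$ under $(F,G)$; because $(LF, GR)$ is the composite adjunction, the left adjunct of $f^\vee$ under $(LF, GR)$ coincides with $f^\wedge : LFA \to U$. Since $A \in \acal_c$ and $RU \in \bcal_f$, the hypothesis that $(F,G)$ is a Quillen equivalence gives that $f$ is a weak equivalence if and only if $f^\vee$ is, and the hypothesis that $(LF,GR)$ is a Quillen equivalence, with $A$ cofibrant and $U$ fibrant, gives that $f^\vee$ is a weak equivalence if and only if $f^\wedge$ is. Concatenating these equivalences establishes the claim for $X = FA$.

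For a general cofibrant $X \in \bcal$, I would reduce to the previous case by resolving $X$ via the counit of $(F,G)$. Set $A := GX$, which lies in $\acal_c = \acal$, and consider $p_X : FA \to X$, the left adjunct of $\mathrm{id}_A : A \to GX$. Since $(F,G)$ is a Quillen equivalence with $A$ cofibrant and $X$ fibrant, $p_X$ is a weak equivalence in $\bcal$ between cofibrant objects, so by \cite[Corollary 7.7.2]{Hi} the left Quillen functor $L$ sends $p_X$ to a weak equivalence $Lp_X : LFA \to LX$. Given any $f : X \to RU$, the previous step applied to $f \circ p_X : FA \to RU$ shows that $f \circ p_X$ is a weak equivalence if and only if $(f \circ p_X)^\wedge = f^\wedge \circ Lp_X$ is; two applications of 2-out-of-3, using that $p_X$ and $Lp_X$ are weak equivalences, then yield the equivalence between $f$ being a weak equivalence and $f^\wedge$ being one. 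The only real subtlety is the bookkeeping of cofibrancy and fibrancy across the three categories, which is trivialized precisely by the hypotheses $\acal_c = \acal$ and $\bcal_f = \bcal$, so I expect the argument to be essentially formal once the reduction to $X = FA$ is in place.
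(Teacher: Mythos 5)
Your proposal is correct and follows essentially the same route as the paper's proof: first verifying the adjunct criterion for objects of the form $X=FA$ by composing the two Quillen-equivalence characterizations, then reducing the general cofibrant case via the counit-type map $p_X : FGX \to X$, Ken Brown's lemma (\cite[Corollary 7.7.2]{Hi}), and the two-out-of-three property. No gaps.
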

	
	\begin{proof}[\sl Proof of Theorem \ref{Quillenequiv}(2).] Recall that $\scal_c = \scal$ and that $\dcal_f = \dcal$ (Theorem \ref{originmain}). From Lemmas \ref{Quillenpairs}, \ref{adjoint}(2), and \ref{clequiv}, and Theorem \ref{Quillenequiv}(1), we obtain the result via Lemma \ref{Quillenpairreduction}.
	\end{proof}

\section{Diffeological spaces having the $\dcal$-homotopy type of a cofibrant object}	
	Using the unit interval $I\ (\cong\Delta^1)$, the $\dcal$-homotopy type (or smooth homotopy type) of a diffeological space is defined in the obvious manner (see \cite[Section 2.4]{origin}). In this section, we study diffeological spaces having the $\dcal$-homotopy type of a cofibrant object, and give various diffeological spaces that do not have the $\dcal$-homotopy type of a cofibrant object.
	\par\indent
	We define the subclass $\wcal_{\ccal^0}$ of $\ccal^0$ by
	$$
	\wcal_{\ccal^0} = \{X \in \ccal^0 \ | \ X \ \text{has the homotopy type of a cofibrant arc-generated space} \}.
	$$
	Note that $\wcal_{\ccal^0} = \wcal \cap \ccal^0$, where $\wcal$ denote the class of topological spaces having the homotopy type of a $CW$-complex; the class $\wcal$ was introduced and intensively studied by Milnor \cite{Mi}. Similarly, we define the subclass $\wcal_{\dcal}$ of $\dcal$ by
	$$
	\wcal_{\dcal} = \{A \in \dcal \ | \ A\ \text{has the $\dcal$-homotopy type of a cofibrant diffeological space} \}.
	$$
	We also define the subclass $\vcal_\dcal$ of $\dcal$ by
	$$
	\vcal_\dcal = \Set{A \in \dcal |
		\begin{array}{l}
		id : A \longrightarrow R\widetilde{A} \text{ is a weak equivalence in } \dcal
		\end{array}
		}.
	$$
	
	\begin{prop}\label{W} (1) If $A$ is in $\wcal_{\dcal}$, then $\widetilde{A}$ is in $\wcal_{\ccal^0}$.\\ \\
		(2) If $A$ is in $\wcal_\dcal$, then $A$ is in $\vcal_\dcal$.\\ \\
		(3) The following conditions are equivalent:
		\begin{itemize}
			\item[$(i)$] $A$ is in $\vcal_{\dcal}$;
			\item[$(ii)$] $\text{The inclusion } S^\dcal A \longhookrightarrow S\widetilde{A} \text{ is a weak equivalence in } \scal$;
			\item[$(iii)$] The natural homomorphism
			$$
			\pi^{\dcal}_{p}(A, a) \longrightarrow \pi_{p}(\widetilde{A}, a)
			$$
			is an isomorphism for any $a \in A$ and any $p \geq 0$.
		\end{itemize}	
	\end{prop}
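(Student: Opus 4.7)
The plan is to dispatch the three parts in order.

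For Part 1, the key is that $\widetilde{\cdot}:\dcal\longrightarrow\ccal^0$ is a left Quillen functor (Lemma \ref{Quillenpairs}), so it sends cofibrant diffeological spaces to cofibrant arc-generated spaces. It also preserves $\dcal$-homotopy equivalences: the underlying topological space of $\Delta^1$ is $I$ (Axiom 1), so a smooth homotopy $A\times\Delta^1\longrightarrow B$ descends, via the natural continuous comparison $\widetilde{A\times\Delta^1}\longrightarrow\widetilde{A}\times I$, to a continuous homotopy $\widetilde{A}\times I\longrightarrow\widetilde{B}$. Thus if $A\simeq B$ in $\dcal$ with $B$ cofibrant, then $\widetilde{A}\simeq\widetilde{B}$ in $\ccal^0$ with $\widetilde{B}$ cofibrant, so $\widetilde{A}\in\wcal_{\ccal^0}$.

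For Part 2, let $h:A\longrightarrow B$ witness a $\dcal$-homotopy equivalence with $B$ cofibrant. I would consider the naturality square
\begin{center}
\begin{tikzcd}
A \arrow[r,"h"] \arrow[d,"id"'] & B \arrow[d,"id"]\\
R\widetilde{A} \arrow[r,"R\widetilde{h}"'] & R\widetilde{B}
\end{tikzcd}
\end{center}
and apply two-out-of-three. The top arrow is a weak equivalence since $\dcal$-homotopy equivalences induce isomorphisms on smooth homotopy groups (Theorem \ref{homotopygp}). The right vertical map is the unit of $(\widetilde{\cdot},R)$ at the cofibrant-fibrant pair $(B,\widetilde{B})$, whose adjunct is $id_{\widetilde{B}}$; since $(\widetilde{\cdot},R)$ is a Quillen equivalence (Theorem \ref{Quillenequiv}(2)), it is a weak equivalence. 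The bottom arrow requires the most care: by the observation in Part 1, $\widetilde{h}$ is a homotopy equivalence in $\ccal^0$, and since $R$ is a right Quillen functor (Lemma \ref{Quillenpairs}) and every object of $\ccal^0$ is fibrant (Remark \ref{arc}), Ken Brown's lemma gives that $R\widetilde{h}$ is a weak equivalence. Two-out-of-three then forces $id:A\longrightarrow R\widetilde{A}$ to be a weak equivalence, so $A\in\vcal_\dcal$.

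For Part 3, the equivalence (i)$\iff$(ii) is essentially definitional: $id:A\longrightarrow R\widetilde{A}$ is a weak equivalence in $\dcal$ iff $S^\dcal$ sends it to a weak equivalence in $\scal$ (Definition \ref{WFC}), and Lemma \ref{adjoint}(2) identifies this image with the inclusion $S^\dcal A\longhookrightarrow S\widetilde{A}$. For (ii)$\iff$(iii), both $S^\dcal A$ and $S\widetilde{A}$ are Kan complexes (by Lemma \ref{generating}(1) applied to $A\longrightarrow\ast$, and by classical topology respectively), so the inclusion is a weak equivalence in $\scal$ iff it is a $\pi_p$-isomorphism at every basepoint. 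Plugging in the identifications $\pi_p(S^\dcal A,a)\cong\pi^\dcal_p(A,a)$ of Theorem \ref{homotopygp} and the classical $\pi_p(S\widetilde{A},a)\cong\pi_p(\widetilde{A},a)$ translates this into precisely condition (iii); the only thing to check is that under these isomorphisms the induced map on $\pi_p$ agrees with the natural homomorphism $\pi^\dcal_p(A,a)\longrightarrow\pi_p(\widetilde{A},a)$, which is immediate from the naturality of $\varTheta_X$ applied to the smooth map $id:A\longrightarrow R\widetilde{A}$.
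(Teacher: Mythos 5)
Your proposal is correct and follows essentially the same route as the paper: Part 1 via the left Quillen functor $\widetilde{\cdot}$ plus preservation of homotopy equivalences, Part 2 by reducing to the cofibrant case (where the Quillen equivalence $(\widetilde{\cdot},R)$ and $\ccal^0_f=\ccal^0$ give the unit is a weak equivalence) and transporting along a naturality square with two-out-of-three, and Part 3 exactly as in the paper. The only local difference is in the bottom arrow of your Part 2 square: the paper shows directly that $R$ carries $\ccal^0$-homotopies to $\dcal$-homotopies (since $R$ preserves products and $id:I\to R\widetilde{I}$ is smooth), so $R\widetilde{h}$ is a $\dcal$-homotopy equivalence, whereas you invoke Ken Brown's lemma together with the classical fact that a continuous homotopy equivalence is a weak equivalence --- both are valid.
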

	
	\begin{proof} (1) The result follows immediately from Lemma \ref{Quillenpairs} and \cite[Proposition 2.21]{origin}.\\ \\
		(2) We prove the result in two steps.\\ \\
		{\sl Step 1: The case where $A$ is cofibrant.} Since $\widetilde{\cdot}: \dcal \rightleftarrows \ccal^{0}: R$ is a pair of Quillen equivalences (Theorem \ref{Quillenequiv}) and $\ccal^{0}_{f} = \ccal^{0}$ holds, $id: A \longrightarrow R\widetilde{A}$ is a weak equivalence in $\dcal$.\\ \\
		
		\noindent {\sl Step 2. The case where $A$ is in $\wcal_{\dcal}$.} Since $A$ is in $\wcal_{\dcal}$, there are a cofibrant diffeological space $A'$ and a $\dcal$-homotopy equivalence $f: A' \longrightarrow A$. By \cite[Proposition 2.21]{origin}, $\widetilde{f}: \widetilde{A'} \longrightarrow \widetilde{A}$ is a $\ccal^{0}$-homotopy equivalence. Since $R$ preserves products and $id: I \longrightarrow R\tilde{I}$ is smooth (Lemma \ref{adjoint}(2)), $R\widetilde{f}: R\widetilde{A'} \longrightarrow R\widetilde{A}$ is a $\dcal$-homotopy equivalence. Thus, we have the commutative diagram in $\dcal$
		$$
		\begin{tikzcd}
		A' \arrow[d,swap,"id"]\arrow[r,"f"] \arrow[r,swap,"\simeq"] & A \arrow[d,"id"]\\
		R\widetilde{A}'\arrow[r,"R\widetilde{f}"]\arrow[r,swap,"\simeq"] & R\widetilde{A}.
		\end{tikzcd}
		$$
		Hence, $id : A \longrightarrow R\widetilde{A}$ is a weak equivalence by Step 1.\\ \\
		(3) $(i) \Longleftrightarrow (ii)$ From the definition of a weak equivalence in $\dcal$ (Definition \ref{WFC}) and the equality $S^\dcal R = S$ (Lemma \ref{adjoint}(2)), we see that $(i)$ is equivalent to $(ii)$.\\
		$(ii)\Longleftrightarrow(iii)$ Recall that $S^\dcal A$ and $S\widetilde{A}$ are Kan complexes (Theorem \ref{originmain} and Remark \ref{arc}). Since there are natural isomorphisms
		$$
		\pi^{\dcal}_{p}(A, a) \xrightarrow[\cong]{} \pi_{p}(S^{\dcal}A, a),
		$$
		$$
		\pi_{p}(\widetilde{A}, a) \xrightarrow[\cong]{} \pi_{p}(S\widetilde{A}, a)	
		$$
		(Theorem \ref{homotopygp}), the equivalence $(ii)\Longleftrightarrow(iii)$ is obvious (see, for example, \cite[Section 3]{K}).
	\end{proof}
	
	We give three types of diffeological spaces which does not have the $\dcal$-homotopy type of a cofibrant object.
	
	\if0
	Then, we can prove the following result: If a diffeological space $A$ has the $\dcal$-homotopy type of a cofibrant object, then $\pi^{\dcal}_{p}(A, a)$ is naturally isomorphic to $\pi_{p}(\widetilde{A}, a)$ for any $a \in A$ and any $p \geq 0$ (Proposition \ref{W}). Thus, we obtain
	\fi
	
	\begin{lem}\label{nonW}
		Let $X$ be an arc-generated space in $\wcal_{\ccal^0}$ which is compact. Then, $\pi_1 (X)$ and $H_1 (X)$ are finitely generated. Further, if $X$ is $1$-connected, then $\pi_i(X)$ and $H_i (X)$ are finitely generated for any $i > 0$.
		\begin{proof}
			Since $X \in \wcal_{\ccal^0}$, there exist a $CW$-complex $Z$ and a homotopy commutative diagram in $\ccal^0$
			\begin{center}
				\begin{tikzpicture} 
				\node [below] at (0,0.25) {\begin{tikzcd}   
					X \arrow[r, "f"]
					& Z \arrow[r, "g"]
					& X .
					\end{tikzcd}};
				\draw[->] (-1.5,-.5) -- (-1.5,-.75)--(1.4,-.75)-- (1.4,-0.5);   
				\node at (0,-1.) {$1_X$};
				\end{tikzpicture}
			\end{center}
			Since $X$ is compact, we choose a finite subcomplex $Z_0$ of $Z$ such that the diagram above restricts to the homotopy commutative diagram in $\ccal^0$
			\begin{center}
				\begin{tikzpicture}
					\node[below] at (0, 0.25) {
						\begin{tikzcd}
							X \arrow[r, "f"] &
							Z_0 \arrow[r, "g"] &
							X.
						\end{tikzcd}
					};
					\draw[->] (-1.5, -.5)--(-1.5, -.75)--(1.45, -.75)--(1.45,-.5);
					\node at (0,-1.){$1_X$};
				\end{tikzpicture}
			\end{center}
			Thus, we have the commutative diagram in the category of groups
			\begin{center}
				\begin{tikzpicture}
					\node[below] at (0, 0.25) {
						\begin{tikzcd}
						\pi_1 (X) \arrow[r, "f_\sharp"] &
						\pi_1 (Z_0) \arrow[r, "g_\sharp"] &
						\pi_1 (X)
						\end{tikzcd}
					};
					\draw[->] (-2.2, -.5)--(-2.2, -.75)--(1.9, -.75)--(1.9,-.5);
					\node at (0,-1.){$1_{\pi_1 (X)}$};
				\end{tikzpicture}
			\end{center}
			with $\pi_1 (Z_0)$ finitely generated, which shows that $\pi_1 (X)$, and hence $H_1 (X)$ is finitely generated.
			\par\indent
			If $X$ is $1$-connected, we may assume that $Z$ is a $CW$-complex whose $1$-skeleton is a singleton (\cite[pp. 74-75]{MayAT}). Since $Z_0$ is a $1$-connected finite complex, we thus see that $\pi_i (Z_0)$ and $H_i (Z_0)$ are finitely generated (\cite[Theorem 4.5.2]{MP}), and hence that $\pi_i (X)$ and $H_i (X)$ are also finitely generated.
		\end{proof}
	\end{lem}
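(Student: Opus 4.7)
The plan is to exploit that a compact space homotopy-equivalent to a CW-complex is actually dominated by a \emph{finite} subcomplex, and then propagate finite generation through the retraction.

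First, since $X\in\wcal_{\ccal^0}$ I would pick a cofibrant arc-generated space $Z$ (i.e.\ a CW-complex in $\ccal^0$) together with a homotopy equivalence; reorganizing the data this gives maps $f\colon X\to Z$ and $g\colon Z\to X$ in $\ccal^0$ with $g\circ f\simeq 1_X$, so $X$ is a homotopy retract of $Z$. The key move is to shrink $Z$: because $X$ is compact, $f(X)$ is a compact subset of the CW-complex $Z$, so it meets only finitely many open cells and therefore lies in a finite subcomplex $Z_0\subseteq Z$. Composing with the inclusion $Z_0\hookrightarrow Z$ we obtain a factorization $X\xrightarrow{f} Z_0\xrightarrow{g|_{Z_0}} X$ whose composite is still homotopic to $1_X$, i.e.\ $X$ is dominated by the finite complex $Z_0$.

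Next I would pass to homotopy and homology functors. Applying $\pi_1(-)$ to the above diagram gives homomorphisms $\pi_1(X)\to\pi_1(Z_0)\to\pi_1(X)$ whose composite is the identity, so $\pi_1(X)$ is a retract, hence a quotient, of the finitely generated group $\pi_1(Z_0)$ and is therefore finitely generated. Taking abelianizations (or equivalently applying $H_1$) gives the same conclusion for $H_1(X)$.

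For the simply connected case I would first replace $Z$ by a CW-approximation with a single $0$-cell and no $1$-cells, which is permissible whenever $X$ is $1$-connected; then $Z_0$ is a finite simply connected CW-complex. For such a complex all homotopy and homology groups are finitely generated (e.g.\ by induction on cells using the cellular chain complex for $H_i$, and then Serre's mod-$\mathcal{C}$ Hurewicz/Whitehead theorem or the standard argument in \cite[Theorem 4.5.2]{MP} for $\pi_i$). The same retraction argument as above then shows $\pi_i(X)$ and $H_i(X)$ are retracts of finitely generated abelian groups, hence finitely generated.

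The only real obstacle is the first step—ensuring that compactness of $X$ really forces $f(X)$ into a finite subcomplex of $Z$ \emph{in the arc-generated setting}. This is essentially the classical fact that compact subsets of CW-complexes meet only finitely many cells, and it carries over verbatim because the underlying topology of a cofibrant arc-generated space coincides with its usual CW-topology; after that, the rest of the argument is purely formal manipulation of retracts in the category of groups and abelian groups.
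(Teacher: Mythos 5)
Your proposal is correct and follows essentially the same route as the paper: dominate $X$ by a CW-complex, use compactness to cut down to a finite subcomplex $Z_0$, and read off finite generation of $\pi_1$, $H_1$ (and, in the $1$-connected case after normalizing the $1$-skeleton, of all $\pi_i$ and $H_i$) from the resulting retraction of groups. The only difference is that you spell out the classical fact that a compact subset of a CW-complex lies in a finite subcomplex, which the paper leaves implicit.
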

	
	\begin{exa}\label{Hawaiian}
		The $n$-dimensional {\it Hawaiian earring} $A^n$ is defined to be the diffeological subspace of $\rbb^{n+1}$
		$$
		\overset{\infty}{\underset{m=1}{\cup}} \ \Set{ (x_0, \cdots , x_n) \in \rbb^{n+1} |
			\begin{array}{l}
				(x_0 - \frac{1}{\sqrt[n]{m}})^2 + x_1^2 + \cdots + x_n^2 = (\frac{1}{\sqrt[n]{m}})^2
			\end{array} 
		}.
		$$
		\begin{itemize}
			\item[(1)] $A^n$ does not have the $\dcal$-homotopy type of a cofibrant object.
			\item[(2)] The natural homomorphism
			$$
			\pi_n^\dcal (A^n) \longrightarrow \pi_n (\widetilde{A^n})
			$$
			is not surjective.
		\end{itemize}
		\begin{proof}
			\begin{itemize}
				\item[(1)] By the argument of \cite[Example 3.12]{CW} (see also \cite[p. 18]{KM}), we see that the underlying topology of $A^n$ is just the sub-topology of $\rbb^{n+1}$ and that $\widetilde{A^n}$ is homomorphic to the topological subspace of $\rbb^{n+1}$
				$$
				\overset{\infty}{\underset{m=1}{\cup}} \ \Set{ (x_0, \cdots , x_n) \in \rbb^{n+1} |
					\begin{array}{l}
					(x_0 - \frac{1}{m})^2 + x_1^2 + \cdots + x_n^2 = (\frac{1}{m})^2
					\end{array} 
				}.
				$$
				Thus, the result follows from \cite{EKK}, \cite{EKK2}, Lemma \ref{nonW}, and Proposition \ref{W}(1).
				\item[(2)] We can prove the result by an argument similar to that in \cite[Example 3.12]{CW}. \qed
			\end{itemize}
		\color{white} \end{proof}
	\end{exa}
	
	A diffeological space $A$ is called {\it arcwise connected} if $\pi_0^\dcal (X) = 0$; recall that $A$ is arcwise connected if and only if $\widetilde{A}$ is arcwise connected (\cite{I0}).
	
	\begin{lem}\label{homog}
		Let $G$ be an arcwise connected diffeological group, and $H$ a diffeological subgroup of $G$. If $H$ is dense in $\widetilde{G}$ and the inclusion $H \longhookrightarrow G$ is not a weak equivalence in $\dcal$, then $G / H$ does not have the $\dcal$-homotopy type of a cofibrant object.
		\begin{proof}
			By the assumption, $\widetilde{G / H}$ is an indiscrete space (\cite[Lemma 2.14]{origin}), and hence, contractible.
			\par\indent
			On the other hand, we see that $\pi_\ast^\dcal (G/H) \neq 0$ from the homotopy exact sequence of the sequence
			$$
			H \longhookrightarrow G \longrightarrow G/H
			$$
			(\cite[8.11 and 8.21]{I}). Thus, we obtain the result by Proposition \ref{W}(2).
		\end{proof}
	\end{lem}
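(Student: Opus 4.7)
The plan is to derive a contradiction from the assumption that $G/H$ lies in $\wcal_\dcal$, by showing that the underlying arc-generated space $\widetilde{G/H}$ is contractible while $\pi^{\dcal}_\ast (G/H)$ is nontrivial. These two facts are incompatible with Proposition \ref{W}, which forces the natural comparison of smooth and topological homotopy groups to be an isomorphism for any $A \in \wcal_\dcal$.

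First, I would check that $\widetilde{G/H}$ is indiscrete. Since $\widetilde{\,\cdot\,}:\dcal \longrightarrow \ccal^0$ is a left adjoint (Proposition \ref{conven}(3)), it preserves the colimit defining the quotient diffeological group, so $\widetilde{G/H}$ is the arc-generated quotient of $\widetilde G$ by the (dense) subgroup $\widetilde H$. Invoking Lemma 2.14 of \cite{origin} (as in the statement), this quotient carries the indiscrete topology. In particular every map into $\widetilde{G/H}$ is continuous, so $\widetilde{G/H}$ is $\ccal^0$-contractible and $\pi_p(\widetilde{G/H}, \ast) = 0$ for all $p \geq 0$.

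Second, I would apply the long exact sequence of smooth homotopy groups associated with the sequence of diffeological groups
$$
H \longhookrightarrow G \longrightarrow G/H
$$
provided by \cite[8.11 and 8.21]{I}. Since $G$ is arcwise connected, $\pi^{\dcal}_0(G) = 0$, and since the inclusion $H \longhookrightarrow G$ is by hypothesis not a weak equivalence in $\dcal$, some induced map $\pi^{\dcal}_p(H) \longrightarrow \pi^{\dcal}_p(G)$ fails to be an isomorphism. By exactness, either $\pi^{\dcal}_{p+1}(G/H)$ or $\pi^{\dcal}_{p}(G/H)$ must be nonzero, so $\pi^{\dcal}_\ast(G/H) \neq 0$.

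Finally, assume for contradiction that $G/H \in \wcal_\dcal$. Then Proposition \ref{W}(2) gives $G/H \in \vcal_\dcal$, and Proposition \ref{W}(3) then asserts that the natural homomorphism $\pi^{\dcal}_p(G/H, \ast) \longrightarrow \pi_p(\widetilde{G/H}, \ast)$ is an isomorphism for every $p \geq 0$. The target vanishes by the first step while the source is nontrivial by the second step, a contradiction. The main delicate point is the first step: one needs the quotient diffeology on $G/H$ and the quotient topology on $\widetilde G/\widetilde H$ to be related so that density of $H$ in $\widetilde G$ indeed forces indiscreteness, which is exactly the content of the cited Lemma 2.14 of \cite{origin}.
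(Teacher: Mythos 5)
Your proposal is correct and follows essentially the same route as the paper: indiscreteness of $\widetilde{G/H}$ via \cite[Lemma 2.14]{origin}, nontriviality of $\pi_\ast^\dcal(G/H)$ via the homotopy exact sequence of $H \hookrightarrow G \to G/H$ from \cite[8.11 and 8.21]{I}, and the contradiction via Proposition \ref{W}(2). You have merely spelled out the details (left-adjointness of $\widetilde{\cdot}$, the exactness argument, the use of Proposition \ref{W}(3)) that the paper leaves implicit.
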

	
	For an irrational number $\theta$, the {\it irrational torus} $T_\theta^2$ {\it of slope} $\theta$ is defined to be the quotient diffeological group $T^2 / \rbb_\theta$, where $T^2 = \rbb^2 / \zbb^2$ is the usual $2$-torus and $\rbb_\theta$ is the image of the injective homomorphism $\rbb \longrightarrow T^2$ sending $x$ to $[x, \theta x]$.
	
	\begin{exa}\label{quotient}
		The quotient diffeological groups $\rbb / \qbb$ and the irrational torus $T_\theta$ do not have the $\dcal$-homotopy type of a cofibrant object.
		\begin{proof}
			The result is immediate from Lemma \ref{homog}.
		\end{proof}
	\end{exa}
	
	\begin{rem}\label{nonisom}
		It was pointed out by Christensen-Wu that the natural homomorphism $\pi_1^\dcal (A, a) \longrightarrow \pi_1 (\widetilde{A}, a)$ may fail to be injective or surjective (\cite[Examples 3.20 and 3.12]{CW}).
	\end{rem}
	
	\if0
	\begin{lem}\label{R}
		Let $X$ be an arcwise connected arc-generated spaces, and $M$ a connected finite dimensional $C^\infty$-manifold. Then, any smooth map $f : RX \longrightarrow M$ is constant.
		\begin{proof}
			Suppose that $f$ is not constant. Then, there exist open diffeological subspaces $U$ and $V$ of $RX$ and $M$ respectively, and a commutative diagram in $\dcal$
			$$
			\begin{tikzcd}
			 & RX\arrow[r,"f"] & M & \\
			\rbb\arrow[r,"d"] & U\arrow[hook]{u}\arrow[r,"f"] &
			V\arrow[hook]{u}\arrow[r,"\phi"] & \rbb
			\end{tikzcd}
			$$
			such that $\phi \circ f \circ d$ is not constant. By the definition of the functor $R$, the composite $\phi \circ f \circ d \circ c$ is smooth for any continuous map $c : \rbb \longrightarrow \rbb$, which is a contradiction.
		\end{proof}
	\end{lem}
	\fi
	\begin{lem}\label{R}
		Let $X$ be an arc-generated space and $K$ a simplicial set. Then, any smooth map $f : RX \longrightarrow |K|_\dcal$ is locally constant.
		\begin{proof}
			We may assume that $X$ is arcwise connected. Suppose that there exists a nonconstant smooth map $f : RX \longrightarrow |K|_\dcal$. Then, we choose $x_0, x_1 \in X$ and a continuous curve $d : \rbb \longrightarrow X$ satisfying the following conditions:
			\begin{itemize}
				\item $f(x_0) \neq f(x_1)$.
				\item $d((-\infty, 0]) = x_0$, $d([1, \infty)) = x_1$.
			\end{itemize}
			By the construction, the composite
			$$
			\rbb \xrightarrow{\ \ d\ \ } RX \xrightarrow{\ \ f\ \ } |K|_\dcal
			$$
			is a nonconstant smooth map.
			\par\indent
			Let us use the presentation of the $CW$-complex $|K|_\dcal$ in the proof of Lemma \ref{realization}. Setting
			$$
			m = \text{min}\ \{ n \mid (f \circ d)(\rbb) \subset |K^n|_\dcal \},
			$$
			we obtain the nonconstant smooth map
			$$
			\rbb \xrightarrow{\ \ f \circ d\ \ } |K^m|_\dcal
			$$
			(\cite[Lemma 9.6]{origin}). Since each map $\mathring{\Delta}_\lambda^m \longrightarrow |K^m|_\dcal$ is a $\dcal$-embedding, we regard $\mathring{\Delta}_\lambda^m$ as a diffeological subspace of $|K^m|_\dcal$. Note that $(f \circ d)^{-1} (\mathring{\Delta}_\lambda^m)$ is a nonempty open set of $\rbb$ for some $\lambda \in NK_m$. Thus, we replace $d : \rbb \longrightarrow X$ so that the nonconstant smooth map
			$$
			\rbb \xrightarrow{\ \ f \circ d\ \ } |K^m|_\dcal
			$$
			correstricts to $\mathring{\Delta}_\lambda^m$. By the definitin of the functor $R$, the composite
			$$
			\rbb \xrightarrow{\ \ c\ \ } \rbb \xrightarrow{\ \ f \circ d\ \ } \mathring{\Delta}_\lambda^m
			$$
			is smooth for any continuous curve $c$, which is a contradiction; recall that $\mathring{\Delta}_\lambda^m \cong \mathring{\Delta}_{\rm sub}^m$ (Lemma \ref{delta}(3)).
		\end{proof}
	\end{lem}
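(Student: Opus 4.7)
The plan is to argue by contradiction, reducing first to a single nonconstant smooth curve in $|K|_\dcal$ and then localizing within one open cell. Since the arcwise components of $RX$ agree with those of $X$ (smooth curves in $RX$ are just continuous curves into $X$), it suffices to show that for arcwise connected $X$, every smooth $f : RX \longrightarrow |K|_\dcal$ is constant. Assuming not, pick $x_0, x_1 \in X$ with $f(x_0) \neq f(x_1)$ and, using arcwise connectedness, build a continuous curve $d : \rbb \longrightarrow X$ that equals $x_0$ on $(-\infty, 0]$ and $x_1$ on $[1, \infty)$. Since the plots of $RX$ are precisely the continuous parametrizations, $d$ is a plot, so $\gamma := f \circ d : \rbb \longrightarrow |K|_\dcal$ is a nonconstant smooth curve.

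The next step uses the $CW$-structure of $|K|_\dcal$ from Lemma \ref{realization} to restrict $\gamma$ to a single open cell. Let $m$ be the smallest integer with $\gamma(\rbb) \subset |K^m|_\dcal$; by minimality, $\gamma^{-1}(\mathring{\Delta}^m_\lambda)$ is a nonempty open subset of $\rbb$ for some non-degenerate $m$-simplex $\lambda$. After restricting $d$ to an open subinterval and reparametrizing by a smooth diffeomorphism, I may assume $\gamma$ corestricts to a nonconstant smooth curve $\rbb \longrightarrow \mathring{\Delta}^m_\lambda$. By Lemma \ref{delta}(3), $\mathring{\Delta}^m_\lambda$ carries the sub-diffeology of $\rbb^{m+1}$, so $\gamma$ is nonconstant and smooth in the ordinary Euclidean sense.

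The final step exploits the fact that $d \circ c$ is a plot of $RX$ for \emph{every} continuous $c : \rbb \longrightarrow \rbb$, forcing $\gamma \circ c$ to be smooth as an $\rbb^{m+1}$-valued map for every such $c$. Choosing $t_0 \in \rbb$ at which some coordinate $\gamma_i$ has nonzero derivative (such a $t_0$ exists because a smooth curve into Euclidean space with identically vanishing derivative is constant) yields a smooth local inverse $\psi$ of $\gamma_i$ near $\gamma_i(t_0)$. Then for a continuous but nowhere differentiable $c$ with $c(s_0) = t_0$ and image close to $t_0$, the identity $c = \psi \circ (\gamma_i \circ c)$ near $s_0$ would force $c$ to be smooth there, a contradiction.

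The principal obstacle is the localization step in the second paragraph: one needs $\mathring{\Delta}^m_\lambda \longhookrightarrow |K^m|_\dcal$ to be a $\dcal$-embedding (so that smoothness into the cell matches smoothness into the skeleton), together with the ability to arrange $\gamma$ to be genuinely nonconstant on an interval mapping into this single cell. Both points are expected to follow from the pushout presentation of $|K^m|_\dcal$ and from $\mathring{\Delta}^m_\lambda$ being an open diffeological subspace of $\Delta^m_\lambda$, but keeping track of openness and embedding properties in the diffeological setting is the most delicate part of the argument.
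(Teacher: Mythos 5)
Your proposal is correct and follows essentially the same route as the paper: reduce to an arcwise connected $X$, push a nonconstant smooth curve $f\circ d$ into a single open cell $\mathring{\Delta}^m_\lambda\cong\mathring{\Delta}^m_{\mathrm{sub}}$ via the $CW$-presentation of $|K|_\dcal$ and the minimality of $m$, and then contradict the fact that $f\circ d\circ c$ must be smooth for every continuous $c$. The only difference is that you spell out the final contradiction (inverse function theorem plus a nowhere differentiable $c$), which the paper leaves implicit, and this added detail is sound.
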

	
	\if0
	\begin{exa}\label{RM}
		Let $M$ be a connected finite dimensional $C^\infty$-manifold whose underlying topological space is not contractible. Then, $R\widetilde{M}$ does not have the $\dcal$-homotopy type of a cofibrant object.
		\begin{proof}
			The result follows immediately from Lemma \ref{R}.
		\end{proof}
	\end{exa}
	\fi
	\begin{exa}\label{RM}
		Let $X$ be an arc-generated space.
		\begin{itemize}
			\item[(1)] $RX$ is in $\vcal_\dcal$.
			\item[(2)] $RX$ has the $\dcal$-homotopy type of a cofibrant object if and only if $X$ has the homotopy type of a discrete set. In particular, if $X$ is arcwise connected but not contractible, then $RX$ does not have the $\dcal$-homotopy type of a cofibrant object.
		\end{itemize}
		\begin{proof}
			\begin{itemize}
				\item[(1)] The statement is obvious since $\tilde{\cdot} \circ R = Id_{\ccal^0}$.
				\item[(2)] We have only to prove the former half of the statement.
			\end{itemize}
			\noindent($\Leftarrow$) Note that $f, g : X \longrightarrow Y$ are $\ccal^0$-homotopic, then $Rf, Rg : RX \longrightarrow RY$ are $\dcal$-homotopic (see the argument in the proof of Proposition \ref{W}(2)). Then, the implication is obvious.\\
			($\Rightarrow$) By the assumption, there exist a simplicial set $K$ and a $\dcal$-homotopy equivalence $f : RX \longrightarrow |K|_\dcal$ (Theorem \ref{Quillenequiv}). By the equality $\tilde{\cdot} \circ R = Id_{\ccal^0}$, Lemma \ref{adjoint}(2), and \cite[Proposition 2.21]{origin}, $\widetilde{f} : X \longrightarrow |K|$ is a $\ccal^0$-homotopy equivalence. Since $\widetilde{f}$ is locally constant (Lemma \ref{R}), $|K|$ is $\ccal^0$-homotopy equivalent to a discrete set.
		\end{proof}
	\end{exa}
	
	Set $\widetilde{\ }\wcal_{\ccal^0} = \{ A \in \dcal \mid \widetilde{A} \in \wcal_{\ccal^0} \}$. From Proposition \ref{W}, we have the following inclusion relations:
	$$
	\begin{tikzcd}
	& & \vcal_\dcal\arrow[dr, hook] & & \\
	\wcal_\dcal\arrow[r, hook] & \vcal_\dcal\cap\widetilde{\ }\wcal_{\ccal^0}\arrow[ur, hook]\arrow[dr, hook] & & \vcal_\dcal\cup\widetilde{\ }\wcal_{\ccal^0}\arrow[r, hook] & \dcal. \\
	& & \widetilde{\ }\wcal_{\ccal^0}\arrow[ur, hook] & &
	\end{tikzcd}
	$$
	From Examples \ref{Hawaiian}, \ref{quotient}, and \ref{RM}, we see that all the inclusions in the above diagram are genuine. 
	
	\if0
	From Examples \ref{Hawaiian}, \ref{quotient}, and \ref{RM}, we have the following inclusion relations:
	$$
	\begin{tikzcd}
	 & \vcal_\dcal\arrow[dr, hook, swap, "\neq"] & \\
	\wcal_\dcal\arrow[ur, hook, swap, "\neq"]\arrow[dr, hook, swap, "\neq"] & & \dcal, \\
	 & \widetilde{\ }\wcal_{\ccal^0}\arrow[ur, hook, swap, "\neq"] &
	\end{tikzcd}
	$$
	where $\widetilde{\ }\wcal_{\ccal^0} = \{ A \in \dcal \mid \widetilde{A} \in \wcal_{\ccal^0} \}$. We also see that there exists no inclusion relation between $\vcal_\dcal$ and $\widetilde{\ }\wcal_{\ccal^0}$. (Note that $R\widetilde{A^n}$ is not in $\widetilde{\ }\wcal_{\ccal^0}$ but in $\vcal_\dcal$, where $A^n$ is the $n$-dimmensional Hawaiian earring.)
	\fi
	
	\if0
	\begin{exa}\label{nonW}
		(1) The {\sl Hawaiian earring} $X$ is defined to be the diffeological subspace of $\rbb^2$
		$$
		\overset{\infty}{\underset{n = 1}{\cup}}\{(a, b) \in \rbb^2 | (a-\frac{1}{n})^2 + b^2 = \frac{1}{n^2} \}.
		$$
		Then, $X$ does not have the $\dcal$-homotopy type of a cofibrant object.\\
		(2) The {\sl irrational torus} $T^{2}_{\theta}$ {\sl of slope} $\theta$ is defined to be the quotient diffeological group
		$$
		\rbb^2 / \zbb(1, 0) + \zbb(0, 1) + \rbb (1, \theta),
		$$
		where $\theta$ is an irrational number. Then, $T^{2}_{\theta}$ does not have the $\dcal$-homotopy type of a cofibrant object.
	\end{exa}
	
	\begin{proof}
		Parts 1 and 2 follow immediately from Examples 3.12 and 3.20 in \cite{CW} respectively via Proposition \ref{W}.
	\end{proof}
	\fi
	
	\if0
	Using the unit interval $I\ (\cong\Delta^1)$, the $\dcal$-homotopy type (or smooth homotopy type) of a diffeological space is defined in the obvious manner (cf. Section 4.1). Then, we can prove the following result: If a diffeological space $A$ has the $\dcal$-homotopy type of a cofibrant object, then $\pi^{\dcal}_{p}(A, a)$ is naturally isomorphic to $\pi_{p}(\widetilde{A}, a)$ for any $a \in A$ and any $p \geq 0$ (Proposition \ref{W}). Thus, we obtain
	
	\begin{exa}\label{nonW}
		(1) The {\sl Hawaiian earring} $X$ is defined to be the diffeological subspace of $\rbb^2$
		$$
		\overset{\infty}{\underset{n = 1}{\cup}}\{(a, b) \in \rbb^2 | (a-\frac{1}{n})^2 + b^2 = \frac{1}{n^2} \}.
		$$
		Then, $X$ does not have the $\dcal$-homotopy type of a cofibrant object.\\
		(2) The {\sl irrational torus} $T^{2}_{\theta}$ {\sl of slope} $\theta$ is defined to be the quotient diffeological group
		$$
		\rbb^2 / \zbb(1, 0) + \zbb(0, 1) + \rbb (1, \theta),
		$$
		where $\theta$ is an irrational number. Then, $T^{2}_{\theta}$ does not have the $\dcal$-homotopy type of a cofibrant object.
	\end{exa}
	\fi
	One of the most important problems in the homotopical algebra of diffeological spaces is the following:
	\begin{problem}\label{PW}
		Find a sufficient large class of $C^{\infty}$-manifolds (in the sense of Fr\"{o}licher-Kriegl) having the $\dcal$-homotopy type of a cofibrant object. 
	\end{problem}
	We address this problem in the forthcoming paper.\\ \\

\section{Chen spaces and diffeological spaces}
In this section, we construct a model structure on the category $\ccal h$ of Chen spaces and prove Theorem \ref{ChD}. We begin by recalling the definition of a Chen space.
\par\indent
In this section, a {\sl convex set} means a convex subset of $\rbb^{n}$ with nonempty interior for some $n$, and a {\sl parametrization} of a set $X$ means a (set-theoretic) map $p: C \longrightarrow X$, where $C$ is a convex set. See \cite[p. 5793]{BH} for the precise definition of a smooth map between convex sets.

\begin{defn}\label{Chensp}
	(1) A {\sl Chen space} is a set $X$ together with a specified set $Ch_X$ of parametrizations of $X$ satisfying the following conditions:
	\begin{itemize}
		\item[(i)](Covering)  Every constant parametrization $p:C\longrightarrow X$ is in $Ch_X$.
		\item[(ii)](Locality) Let $p :C\longrightarrow X$ be a parametrization such that there exists an open cover $\{C_i\}$ of $C$ consisting of convex sets and satisfying $p|_{C_i}\in Ch_X$. Then $p$ is in $Ch_X$
		\item[(iii)](Smooth compatibility) Let $p:C\longrightarrow X$ be in $Ch_X$. Then, for every convex set $C'$ and every smooth map $F: C' \longrightarrow C$, $p\circ F$ is in $Ch_X$.
	\end{itemize}
	Elements of $Ch_{X}$ are called {\sl plots}.\\
	\par\noindent
	(2) Let $X = (X, Ch_{X})$ and $Y = (Y, Ch_{Y})$ be Chen spaces, and let $f: X \longrightarrow Y$ be a (set-theoretic) map. We say that $f$ is {\sl smooth} if for any $p \in Ch_{X}$, $f\circ p \in Ch_{Y}$. Then, Chen spaces and smooth maps form the category $\mathcal{C}h$.
\end{defn}
For a Chen space $A = (A, Ch_{A})$, the {\sl underlying topological space} $\widetilde{A}$ is defined to be the set $A$ endowed with the final topology for $Ch_{A}$. Then, the category $\mathcal{C}h$ shares the convenient properties (1)-(3) in Proposition \ref{conven}. More precisely, we have the following result.

\begin{lem}\label{categoryCh}
	$(1)$ The category $\mathcal{C}h$ has initial and final structures with respect to the underlying set functor. In particular, $\mathcal{C}h$ is complete and cocomplete. Further, the class of $\ccal h$-embeddings (i.e., injective initial morphisms) is closed under pushouts and transfinite composites.\\
	$(2)$ The category $\mathcal{C}h$ is cartesian closed.\\
	$(3)$ The underlying set functor $\mathcal{C}h \longrightarrow Set$
	is factored as the underlying topological space functor 
	$\widetilde{\cdot}:\mathcal{C}h \longrightarrow \czero$
	followed by the underlying set functor
	$\czero \longrightarrow Set$.
	Further, the functor
	$\widetilde{\cdot}:\mathcal{C}h \longrightarrow \czero$
	has a right adjoint
	$R:\czero \longrightarrow \mathcal{C}h$.
\end{lem}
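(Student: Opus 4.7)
The plan is to mirror Proposition~\ref{conven} with convex sets playing the role of Euclidean open sets, citing Baez-Hoffnung \cite{BH} for the bulk of Parts (1) and (2). For Part (1), I would describe the initial structure on $X$ for a sink $\{f_i: X \longrightarrow Y_i\}$ as having plots those parametrizations $p: C \longrightarrow X$ with each $f_i \circ p \in Ch_{Y_i}$, and the final structure on $Y$ for a source $\{f_i: X_i \longrightarrow Y\}$ as having plots $p: C \longrightarrow Y$ which locally factor as $f_i \circ q$ with $q$ a plot of $X_i$, on an open cover of $C$ by convex sets. Completeness and cocompleteness are then formal consequences, and the closure of $\ccal h$-embeddings under pushouts and transfinite composites follows by the same proof as \cite[Proposition 2.1]{origin}, which uses only formal properties of the underlying set functor together with the locality axiom.

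For Part (2), I would take the internal hom $\ccal h(X, Y)$ to be the set of smooth maps $X \longrightarrow Y$ with plots $p: C \longrightarrow \ccal h(X, Y)$ those parametrizations whose adjoints $\hat{p}: C \times X \longrightarrow Y$ are smooth. Verifying the natural bijection $\ccal h(X \times Y, Z) \cong \ccal h(X, \ccal h(Y, Z))$ then reduces to a routine check that smoothness passes across the set-theoretic adjunction, with the Chen axioms playing the same role as the diffeological axioms in the corresponding proof for $\dcal$.

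For Part (3), I would define $R: \ccal^0 \longrightarrow \ccal h$ by endowing an arc-generated space $X$ with the Chen structure of all continuous parametrizations $C \longrightarrow X$; the three Chen axioms are immediate from standard facts on continuous maps (smooth maps between convex sets being continuous for the last one). The adjunction $\ccal h(A, RX) \cong \ccal^0(\widetilde{A}, X)$ then amounts to the observation that a set map $f: A \longrightarrow X$ is smooth from $Ch_A$ to $Ch_{RX}$ iff each composite $f \circ p$ with $p \in Ch_A$ is continuous, iff $\widetilde{f}: \widetilde{A} \longrightarrow X$ is continuous for the final topology on $\widetilde{A}$. The hard part will be verifying that $\widetilde{A}$ is genuinely arc-generated, so that $\widetilde{\cdot}$ lands in $\ccal^0$ and not merely in topological spaces; for this I would show that a subset of $A$ is open with respect to all continuous curves $\rbb \longrightarrow \widetilde{A}$ precisely when it is open with respect to all Chen plots $C \longrightarrow A$, exploiting the arcwise connectedness of convex sets and the fact that any continuous curve into a convex set composed with a plot yields a continuous curve in $\widetilde{A}$, to reduce plot-continuity to curve-continuity.
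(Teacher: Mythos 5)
Your proposal is correct and follows essentially the same route as the paper, which simply defers to the diffeological analogues (\cite[Propositions 2.1 and 2.13]{origin}) and asks the reader to replace open subsets of $\rbb^n$ by convex sets; you have written out exactly those adaptations, including the one genuinely non-formal point (that the final topology for the Chen plots is arc-generated, via arcwise connectedness of convex sets). The only small omission is that the final structure for a family $\{f_i : X_i \to Y\}$ must also admit the parametrizations that are locally constant where they do not factor through any $f_i$, as required by the covering axiom when the $f_i$ are not jointly surjective.
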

\begin{proof}
	The result is shown by arguments similar to those in the case of the category $\dcal$ (see \cite[Propositions 2.1 and 2.13]{origin}).
\end{proof}

Using Lemma \ref{categoryCh}, we can define the standard $p$-simplices $\Delta^p$ in $\mathcal{C}h$, and verify Axioms 1-4 for $\Delta^{p}$ (see Section 2) in the same manner as in the case of $\dcal$. Thus, we define the singular functor $S^{\mathcal{C}h}:\mathcal{C}h \longrightarrow \scal$ in an obvious manner, and then introduce a model structure on the category $\mathcal{C}h$ in the following theorem.
\begin{thm}\label{modelCh}
	Define a map $f :X\longrightarrow Y$ in $\mathcal{C}h$ to be
	\begin{itemize}
		\item[$(1)$]
		a weak equivalence if $S^{\mathcal{C}h} f:S^{\mathcal{C}h} X\longrightarrow S^{\mathcal{C}h} Y$ is a weak equivalence in the category of simplicial sets,
		\item[$(2)$]
		a fibration if the map $f$ has the right lifting property with respect to the inclusions $\Lambda^p_k \longhookrightarrow\Delta^p$ for all $p>0$ and $0\leq k\leq p$, and
		\item[$(3)$]
		a cofibration if the map $f$ has the left lifting property with respect to all maps that are both fibrations and weak equivalences.
	\end{itemize}
	With these choices, $\mathcal{C}h$ is a compactly generated model category whose object is always fibrant.
\end{thm}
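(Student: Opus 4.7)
The plan is to mirror the proof of Theorem \ref{originmain} essentially verbatim. Remark \ref{ingredients} pins down exactly which ingredients are used in that proof: the convenient properties of $\dcal$ in Proposition \ref{conven} and Axioms 1-4 for the standard simplices in Proposition \ref{axioms}. Both sets of inputs are now available for $\ccal h$: Lemma \ref{categoryCh} supplies the convenient properties, and the discussion preceding Theorem \ref{modelCh} supplies Axioms 1-4 for $\Delta^p \in \ccal h$. So my strategy is to transport the $\dcal$-argument across this parallelism.

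First, I would define the generating sets
\begin{eqnarray*}
\ical & = & \{\dot{\Delta}^{p} \longhookrightarrow \Delta^{p} \ | \ p\geq 0 \}, \\
\jcal & = & \{\Lambda^{p}_{k} \longhookrightarrow \Delta^{p} \ | \ p>0,\ 0 \leq k \leq p \}
\end{eqnarray*}
in $\ccal h$ and build the adjunction $|\ |_{\ccal h} : \scal \rightleftarrows \ccal h : S^{\ccal h}$ using Axiom 2 exactly as in Section 2. The $\ccal h$-analogues of Lemmas \ref{adjoint} and \ref{generating} then follow by the same proofs as in \cite[Proposition 9.1, Lemmas 9.2 and 9.3]{origin}, which invoke only the convenient properties and Axioms 1-4. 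In particular, a map in $\ccal h$ is a fibration (resp.\ a trivial fibration) precisely when it has the right lifting property against $\jcal$ (resp.\ $\ical$), equivalently when $S^{\ccal h}$ applied to it is a Kan fibration (resp.\ a trivial Kan fibration).

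Next, I would apply the standard recognition theorem for compactly generated model categories (\cite[Theorem 11.3.1]{Hi}). The relevant smallness in $\ccal h$ follows from Lemma \ref{categoryCh}(1), since $\ccal h$-embeddings are stable under pushouts and transfinite composites and the standard simplices have the same underlying set as in $\dcal$. The two-out-of-three and retract properties of weak equivalences pass through $S^{\ccal h}$ from $\scal$. The nontrivial hypothesis to check is that every relative $\jcal$-cell complex is a weak equivalence; this uses Axiom 4 to produce a smooth deformation retraction of $\Delta^p$ onto $\Lambda^p_k$, which is preserved under pushouts along $\ccal h$-embeddings and under transfinite composition, exactly as in \cite{origin}. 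Fibrancy of every object is then immediate from Axiom 4 by the same deformation-retract lifting argument.

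The main obstacle I expect is purely bookkeeping: every geometric computation that \cite{origin} performs for standard simplices in $\dcal$ must be replayed in $\ccal h$, and one must check carefully that the smallness conditions and the cell-complex/deformation-retract argument for the generating trivial cofibrations go through using only Lemma \ref{categoryCh} and Axioms 1-4. Since no step of \cite{origin} uses any $\dcal$-specific fact beyond those inputs, the adaptation is mechanical and no genuinely new idea is required.
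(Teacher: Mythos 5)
Your proposal is correct and follows exactly the paper's own argument: the paper likewise observes that the proof of Theorem \ref{originmain} uses only the convenient properties (1)--(3) of $\dcal$ and Axioms 1--4 for the standard simplices, and that since Lemma \ref{categoryCh} and the construction of $\Delta^p$ in $\ccal h$ supply these same inputs, the entire proof transfers verbatim. Your additional detail on the recognition theorem and the $\jcal$-cell-complex argument is a faithful expansion of what ``applies to the case of $\ccal h$ as well'' means, not a different route.
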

\begin{proof}
	Since the proof of Theorem \ref{originmain} is constructed using only properties (1)-(3) of the category $\dcal$ and Axioms 1-4 for the standard simplices (see \cite{origin}), it applies to the case of the category $\mathcal{C}h$ as well.
\end{proof}

If we define the smooth homotopy groups $\pi^{\mathcal{C}h}_{p}(X, x)$ of a pointed Chen space $(X, x)$ in the same manner as in the diffeological case (\cite[Section 3.1]{CW} or \cite[Chapter 5]{I}), the result analogous to Theorem \ref{homotopygp} also holds.

By the same arguments as those in the proof of Theorem \ref{Quillenequiv}, we have the following result.
\begin{thm}\label{analogue}
	\begin{itemize}
		\item[(1)]
		$|\ |_{\ccal h}: \scal \rightleftarrows \ccal h: S^{\ccal h}$ is a pair of Quillen equivalences.
		\item[(2)]
		$\tilde{\cdot}: \ccal h \rightleftarrows \ccal^{0}: R$ is a pair of Quillen equivalences.
	\end{itemize}
\end{thm}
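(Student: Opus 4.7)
The plan is to transplant the entire proof architecture of Theorem \ref{Quillenequiv} verbatim into the Chen-space setting, exploiting the fact that Section 6 has already established for $\ccal h$ the two ingredients on which that architecture rests: the convenient properties (1)--(3) of Proposition \ref{conven} (now Lemma \ref{categoryCh}) and Axioms 1--4 for the standard simplices $\Delta^p$ in $\ccal h$. The observation from Remark \ref{ingredients} is decisive here: every intermediate result in Sections 2--4 (Lemmas \ref{adjoint}--\ref{generating}, Theorem \ref{originmain}, Lemma \ref{delta}(1)--(2), the singular homology package of Section 3, and the proof of Theorem \ref{Quillenequiv} itself) is built using only those two inputs, so each statement admits a direct Chen-space analogue with the same proof.

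For Part (1), the first step is to mimic Lemmas \ref{1streduction} and \ref{2ndreduction} to reduce the claim to showing that $i_K : K \longrightarrow S^{\ccal h}|K|_{\ccal h}$ is a weak equivalence when $K$ is a Kan complex with a single vertex. Next, I would develop the singular homology of Chen spaces exactly as in Section 3, obtaining analogues of Proposition \ref{homologytheory}, Lemma \ref{homologicallemma}, Lemma \ref{cellular}, and, most crucially, Proposition \ref{homologyisone} giving $H_\ast(K) \cong H_\ast(|K|_{\ccal h})$. Then I would reproduce Lemma \ref{covering} -- the realization of the universal covering is again a Chen-space covering, since the proof only uses that $|\ |_{\ccal h}$ commutes with the relevant colimits and that Axiom 1 holds so the topological conclusions from \cite{GZ} apply via $\widetilde{\cdot}$. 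Finally, reducing to $1$-connected Kan complexes via the universal covering and using the Chen-space analogue of Lemma \ref{1-connected}, I conclude by the Whitehead theorem.

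For Part (2), the argument is formal once Part (1) is in hand. I would apply Lemma \ref{Quillenpairreduction} to the diagram
\[
\scal \rightleftarrows \ccal h \rightleftarrows \ccal^0
\]
with $(F,G)=(|\ |_{\ccal h}, S^{\ccal h})$ and $(L,R)=(\widetilde{\cdot}, R)$; the composite Quillen pair is $(|\ |,S)$, which is a pair of Quillen equivalences by Lemma \ref{clequiv}, the component pair $(|\ |_{\ccal h}, S^{\ccal h})$ is a pair of Quillen equivalences by Part (1), and the hypotheses $\scal_c = \scal$ and $\ccal h_f = \ccal h$ hold (the latter by the Chen analogue of Theorem \ref{originmain}, i.e.\ Theorem \ref{modelCh}). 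The only step requiring any genuine care is verifying that the composite adjoint pair is indeed $(|\ |, S)$, which amounts to the Chen analogue of Lemma \ref{adjoint}(2); this follows by chasing the definitions of $R$ and $\widetilde{\cdot}$ on $\ccal h$ given in Lemma \ref{categoryCh}(3), together with the fact that $\widetilde{\Delta^p}$ is the topological standard simplex.

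The main obstacle is not any single hard step but rather the bookkeeping: one must genuinely check, rather than merely assert, that each cited lemma in the diffeological proof invokes only the categorical inputs and Axioms 1--4, never any property peculiar to $\dcal$ (such as the specific form of its diffeology). The most delicate point in this audit is the covering-space argument (Lemma \ref{covering}), since it mixes the $\ccal h$-quotient behaviour of realization with a topological input from \cite{GZ} pulled back along $\widetilde{\cdot}$; once one notes that Lemma \ref{categoryCh}(1) gives the same closure properties of $\ccal h$-quotients as in $\dcal$, this transcription is routine.
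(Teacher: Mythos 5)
Your proposal is correct and follows exactly the paper's approach: the paper's entire proof of this theorem is the single remark that the arguments of Theorem \ref{Quillenequiv} carry over verbatim to $\ccal h$, given Lemma \ref{categoryCh} and the verification of Axioms 1--4 for the Chen-space standard simplices. The only caveat worth noting is that a few inputs (Theorem \ref{homotopygp} and Lemma \ref{delta}(3)--(4), which feed into Lemmas \ref{homologicallemma}, \ref{cellular}, and \ref{1-connected}) depend on the explicit inductive construction of the simplices rather than on Axioms 1--4 alone, as Remark \ref{ingredients} is careful to distinguish; these too transfer, but because the Chen-space simplices are built by the same construction, not merely because they satisfy the axioms.
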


\if0
Recall that we have an adjoint pair
\[
\mathfrak{S}_{\mathfrak{o}} : \ccal h \rightleftarrows \dcal : \mathfrak{Ch}^{\sharp}
\]
with $\mathfrak{S}_{\mathfrak{o}} \circ \mathfrak{Ch}^{\sharp} = Id$ (see \cite[p. 105]{St} or \cite[Section 2]{BH}). Let us show that $(\mathfrak{S}_\mathfrak{o}, C\mathfrak{h}^\sharp)$ is a pair of Quillen equivalences.

\begin{thm}\label{strong}
	$\mathfrak{S}_\mathfrak{o} : \ccal h \rightleftarrows \dcal : \mathfrak{Ch}^\sharp$ is a pair of Quillen equivalences. Further, we have the commutative diagram consisting of pairs of Quillen equivalences
	\begin{equation*}
		\begin{tikzcd}
			& \arrow[swap, rightharpoondown]{dl}{<|\ |_{\ccal h}, S^{\ccal h}>} \scal \arrow[harpoon]{dr}{<|\ |_\dcal, S^\dcal>} & \\
			\ccal h \arrow[harpoon]{rr}{<\mathfrak{S}_{\mathfrak{o}}, \mathfrak{Ch}^\sharp>} \arrow[swap, rightharpoondown]{dr}{<\tilde{\cdot}, R>} & & \arrow[harpoon]{dl}{<\tilde{\cdot}, R>} \dcal \\
			& \ccal^0 &
		\end{tikzcd}
	\end{equation*}
	where an adjoint pair $F : \acal \rightleftarrows \bcal : G$ is denoted by $\acal \xrightharpoonup{<F, G>} \bcal$.
	\begin{proof}
		By Theorems \ref{Quillenequiv} and \ref{analogue}, and Lemma \ref{Quillenpairreduction}, we have only to show that $(\mathfrak{S}_{\mathfrak{o}}, \mathfrak{Ch}^\sharp)$ is a Quillen pair (\cite[p. 153]{?}).
		\par\indent
		Note that $\mathfrak{S}_{\mathfrak{o}} \Delta^1_{sub\ \ccal h} = \Delta^1_{sub\ \dcal}$, and that $\mathfrak{S}_{\mathfrak{o}}$ preserves products by definition. Then, we can easily see that $\mathfrak{S}_{\mathfrak{o}} \Delta^p_{\ccal h} = \Delta^p_{\dcal}$, which implies that $|K|_\dcal = \mathfrak{S}_{\mathfrak{o}} |K|_{\ccal h}$. Thus, 
		\fi
Let us recall the adjoint pair between the categories $\ccal h$ amd $\dcal$ introduced by Stacey \cite{St} (see also \cite{BH}).
\par\indent
To distinguish the two notions of a parametrization, we call parametrizations defined in Sections 2 and 6 open-parametrizations and convex-parametrizations respectively.
\par\indent
The functor $\mathfrak{S}_\mathfrak{o} : \ccal h \longrightarrow \dcal$ is defined to assign to a Chen space $A = (A, Ch_A)$ the set $A$ endowed with the diffeology
$$
D_{\mathfrak{S}_\mathfrak{o}A} = \text{\{Chen smooth open-parametrizations\}}.
$$
The functor $\mathfrak{Ch}^\sharp : \dcal \longrightarrow \ccal h$ is defined to assign to a diffeological space $X = (X, D_X)$ the set $X$ endowed with the Chen structure 
$$
Ch_{\mathfrak{Ch}^\sharp X} = \text{\{diffeologically smooth convex-parametrizations\}}.
$$
(Use \cite[Theorem 24.5]{KM} to verify condition (iii) in Definition \ref{Chensp}.) Observe that $\mathfrak{S}_\mathfrak{o} \circ \mathfrak{Ch}^\sharp = Id_\dcal$ and that $id : A \longrightarrow \mathfrak{Ch}^\sharp \mathfrak{S}_\mathfrak{o} A$ is Chen smooth (\cite[Theorem 24.5]{KM}). Then, it is easily seen that
$$
\mathfrak{S}_\mathfrak{o} : \ccal h \rightleftarrows \dcal : \mathfrak{Ch}^\sharp
$$
is an adjoint pair.
\par\indent
We can also construct a left adjoint to $\mathfrak{S}_\mathfrak{o} : \ccal h \longrightarrow \dcal$.
\par\indent
The functor $\mathfrak{Ch}^\flat : \dcal \longrightarrow \ccal h$ is defined to assign to a diffeological space $A$ the set $A$ endowed with the Chen structure generated by $D_A = \{ p : U \longrightarrow A \}$. Observe that $\mathfrak{S}_\mathfrak{o} \circ \mathfrak{Ch}^\flat = Id_\dcal$ and that $id : \mathfrak{Ch}^\flat \mathfrak{S}_\mathfrak{o} X \longrightarrow X$ is Chen smooth. Then, it is easily seen that
$$
\mathfrak{Ch}^\flat : \dcal \rightleftarrows \ccal h : \mathfrak{S}_\mathfrak{o}
$$
is an adjoint pair.
\par\indent
\begin{lem}\label{deltapreserve} 
	$\mathfrak{S}_\mathfrak{o}\Delta^p_{\ccal h} = \Delta^p_\dcal$ holds for $p \geq 0$, where $\Delta^p_{\ecal}$ denotes the standard $p$-simplex in $\ecal = \ccal h, \dcal$.
	\begin{proof}
		Let $\rbb_\ecal$ denote the real line $\rbb$ viewed as an object of $\ecal = \ccal h, \dcal$. $\mathfrak{S}_\mathfrak{o} \rbb_{\ccal h} = \rbb_\dcal$ holds obviously. Since we have the adjoint pairs $(\mathfrak{S}_\mathfrak{o}, \mathfrak{Ch}^\sharp)$ and $(\mathfrak{Ch}^\flat, \mathfrak{S}_\mathfrak{o})$ which are compatable with the underlying set functors, $\mathfrak{S}_\mathfrak{o}$ preserves initial and final structures, and limits and colimits (\cite[Proposition 8.7.4]{FK}). Thus, we have $\mathfrak{S}_\mathfrak{o} \Delta_{\ccal h}^p = \Delta_\dcal^p$ (see Definition \ref{simplices}).
	\end{proof}
\end{lem}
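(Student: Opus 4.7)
The plan is to prove Lemma \ref{deltapreserve} by induction on $p$, exploiting the fact that $\mathfrak{S}_\mathfrak{o}$ sits between two adjoint pairs that are both compatible with the underlying set functors. The key preliminary observation is that $\mathfrak{S}_\mathfrak{o}$, having both a left adjoint $\mathfrak{Ch}^\flat$ and a right adjoint $\mathfrak{Ch}^\sharp$, not only preserves all limits and colimits but also preserves initial and final structures with respect to the underlying set functor (see \cite[Proposition 8.7.4]{FK}). In particular, $\mathfrak{S}_\mathfrak{o}$ sends sub-objects to sub-objects, quotients to quotients, products to products, and final structures for families of maps (such as the ones used to define $\Delta^p$) to final structures for the same families.

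Before starting the induction, I would dispatch the case $\mathfrak{S}_\mathfrak{o} \rbb_{\ccal h} = \rbb_\dcal$. This amounts to checking that an open-parametrization $p : U \longrightarrow \rbb$ is Chen smooth in the sense of the definition of $\mathfrak{S}_\mathfrak{o}$ if and only if $p$ is a plot in the standard diffeology on $\rbb$; both conditions reduce to the ordinary smoothness of $p$ (after covering $U$ by convex open subsets, this is standard). From this and product preservation, $\mathfrak{S}_\mathfrak{o} \rbb^{p+1}_{\ccal h} = \rbb^{p+1}_\dcal$, and by preservation of initial structures $\mathfrak{S}_\mathfrak{o} \abb^p_{\ccal h} = \abb^p_\dcal$.

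For the base case $p \leq 1$, Definition \ref{simplices} sets $\Delta^p = \Delta^p_{\mathrm{sub}}$ in both categories, i.e. $\Delta^p$ is endowed with the sub-structure of $\abb^p$. Preservation of initial structures immediately gives $\mathfrak{S}_\mathfrak{o} \Delta^p_{\ccal h} = \Delta^p_\dcal$. For the inductive step, assume the equality holds for indices less than $p$. In $\ccal h$, $\Delta^p_{\ccal h}$ is the set $\Delta^p$ equipped with the final structure for the maps $\varphi_0,\ldots,\varphi_p : \Delta^{p-1}_{\ccal h} \times [0,1) \longrightarrow \Delta^p$. Since $\mathfrak{S}_\mathfrak{o}$ preserves products and, by the inductive hypothesis, sends $\Delta^{p-1}_{\ccal h}$ to $\Delta^{p-1}_\dcal$ and $[0,1)_{\ccal h}$ to $[0,1)_\dcal$ (the latter as an initial sub-structure of $\rbb$), the domain is preserved; applying the preservation of final structures yields that $\mathfrak{S}_\mathfrak{o} \Delta^p_{\ccal h}$ carries the final structure for the same family $\varphi_i$ in $\dcal$, which is by definition $\Delta^p_\dcal$.

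The only step requiring care is the very first identification $\mathfrak{S}_\mathfrak{o} \rbb_{\ccal h} = \rbb_\dcal$: one must verify from the definitions that the two notions of smoothness on open-parametrizations valued in $\rbb$ coincide, which uses that every open $U \subseteq \rbb^n$ admits a convex open cover so that Chen smoothness reduces via locality to ordinary smoothness. Once this is in hand, everything else is a mechanical push through two adjunctions together with Definition \ref{simplices}.
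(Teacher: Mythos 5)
Your proposal is correct and follows essentially the same route as the paper: both arguments rest on the observation that $\mathfrak{S}_\mathfrak{o}$, having $\mathfrak{Ch}^\flat$ as left adjoint and $\mathfrak{Ch}^\sharp$ as right adjoint compatibly with the underlying set functors, preserves initial and final structures as well as limits and colimits, reduce to the identification $\mathfrak{S}_\mathfrak{o}\rbb_{\ccal h}=\rbb_\dcal$, and then push through the inductive construction of $\Delta^p$ in Definition \ref{simplices}. You merely make explicit the induction and the base case that the paper leaves implicit.
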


In the following, an adjoint pair $F : \acal \rightleftarrows \bcal : G$ is also denoted by $\acal \xrightharpoonup{<F,G>} \bcal$.
\begin{proof}[Proof of Theorem \ref{ChD}.] Since $\mathfrak{S}_\mathfrak{o}$ is a left adjoint, we see from Lemma \ref{deltapreserve} that $\mathfrak{S}_\mathfrak{o} |K|_{\ccal h} = |K|_\dcal$ holds for $K \in \scal$. Thus, the composite
	$$
	\scal \xrightharpoonup{<|\ |_{\ccal h}, S^{\ccal h}>} \ccal h \xrightharpoonup{<\mathfrak{S}_\mathfrak{o}, \mathfrak{Ch}^\sharp>} \dcal
	$$
	is just
	$$
	\scal \xrightharpoonup{<|\ |_\dcal, S^\dcal>} \dcal .
	$$
	By Theorems \ref{Quillenequiv} and \ref{analogue}, and Lemma \ref{Quillenpairreduction}, we have only to show that $<\mathfrak{S}_\mathfrak{o}, \mathfrak{Ch}^\sharp>$ is a Quillen pair.
	\par\indent
	For a subset $A$ of $\Delta^p$, $A_\ecal$ denotes the set $A$ endowed with the initial structure for the inclusion into $\Delta_\ecal^p \ (\ecal = \ccal h, \dcal)$. Since $\mathfrak{S}_\mathfrak{o}$ preserves initial structures, we see from Lemma \ref{deltapreserve} that $\mathfrak{S}_\mathfrak{o} A_{\ccal h} = A_\dcal$. Thus, by the adjointness of $(\mathfrak{S}_\mathfrak{o}, \mathfrak{Ch}^\sharp)$, the lifting problem in $\dcal$
	$$
	\begin{tikzcd}
	A_{\dcal} \arrow{r} & X \arrow{d}{f}\\
	\Delta^p_{\dcal} \arrow[hookleftarrow]{u} \arrow[dashed]{ur} \arrow{r}& Y
	\end{tikzcd}
	$$
	is equivalent to the lifting problem in $\ccal h$
	$$
	\begin{tikzcd}
	A_{\ccal h} \arrow{r} & \mathfrak{Ch}^{\sharp}X \arrow{d}{f}\\
	\Delta^p_{\ccal h} \arrow[hookleftarrow]{u} \arrow[dashed]{ur} \arrow{r}& \mathfrak{Ch}^{\sharp}Y.
	\end{tikzcd}
	$$
	This shows that $\mathfrak{Ch}^\sharp$ preserves fibrations and trivial fibrations (Lemma \ref{generating}), and hence, that $(\mathfrak{S}_{\mathfrak{o}}, \mathfrak{Ch}^\sharp)$ is a Quillen pair.
\end{proof}

The relevant pairs of Quillen equivalences are put together in the commutative diagram in the following proposition.\\

\begin{prop}\label{adjunction}
	We have the commutative diagram consisting of pairs of Quillen equivalences
	\begin{equation*}
		\begin{tikzcd}
			& \arrow[swap, rightharpoondown]{dl}{<|\ |_{\ccal h}, S^{\ccal h}>} \scal \arrow[harpoon]{dr}{<|\ |_\dcal, S^\dcal>} & \\
			\ccal h \arrow[harpoon]{rr}{<\mathfrak{S}_{\mathfrak{o}}, \mathfrak{Ch}^\sharp>} \arrow[swap, rightharpoondown]{dr}{<\tilde{\cdot}, R>} & & \arrow[harpoon]{dl}{<\tilde{\cdot}, R>} \dcal \\
		& \ccal^0 . &
		\end{tikzcd}
	\end{equation*}
	
	\begin{proof}
		In Theorems \ref{Quillenequiv}, \ref{analogue}, and \ref{ChD}, we have shown that the five adjoint pairs in the diagram above are pairs of Quillen equivalences. The commutativity of the upper triangle is shown in the proof of Theorem \ref{ChD}. From \cite[Lemma 2.12]{origin}, we see that $\widetilde{\mathfrak{S}_\mathfrak{o}A} = \widetilde{A}$ holds for $A \in \ccal h$, and hence that the lower triangle is commutative.
	\end{proof}
\end{prop}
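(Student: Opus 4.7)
The plan is to assemble the proposition from the three Quillen-equivalence results already in the paper and then verify the two commutativity statements; everything is close to bookkeeping at this point. First I would observe that the five adjoint pairs in the diagram have each been shown to be Quillen equivalences: $(|\ |_{\ccal h}, S^{\ccal h})$ and $(\tilde{\cdot}, R)$ on the $\ccal h$ side by Theorem \ref{analogue}, $(|\ |_\dcal, S^\dcal)$ and $(\tilde{\cdot}, R)$ on the $\dcal$ side by Theorem \ref{Quillenequiv}, and $(\mathfrak{S}_\mathfrak{o}, \mathfrak{Ch}^\sharp)$ by Theorem \ref{ChD}. So no new homotopical input is needed.

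Next I would address the upper triangle. Its commutativity says $|\ |_\dcal \cong \mathfrak{S}_\mathfrak{o} \circ |\ |_{\ccal h}$ (equivalently, on right adjoints, $S^{\ccal h} \circ \mathfrak{Ch}^\sharp \cong S^\dcal$, which is automatic by uniqueness of adjoints once the left-adjoint identity is established). This is exactly what was noted in the proof of Theorem \ref{ChD}: since $\mathfrak{S}_\mathfrak{o}$ is a left adjoint and hence preserves colimits, and Lemma \ref{deltapreserve} gives $\mathfrak{S}_\mathfrak{o}\Delta^p_{\ccal h} = \Delta^p_\dcal$, applying $\mathfrak{S}_\mathfrak{o}$ to the colimit presentation $|K|_{\ccal h} = \mathrm{colim}_{\Delta\downarrow K} \Delta^p_{\ccal h}$ yields $|K|_\dcal$. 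So the upper triangle commutes by direct appeal to the proof of Theorem \ref{ChD}.

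For the lower triangle I need $\widetilde{\mathfrak{S}_\mathfrak{o} A} = \widetilde{A}$ for every Chen space $A$ (equivalently, on right adjoints, $\mathfrak{Ch}^\sharp \circ R = R$). By the definition of $\mathfrak{S}_\mathfrak{o}$, the diffeology on $\mathfrak{S}_\mathfrak{o} A$ consists of the Chen-smooth open-parametrizations of $A$, and by the definition of the underlying topology the topologies on $\widetilde{\mathfrak{S}_\mathfrak{o} A}$ and $\widetilde{A}$ are the final topologies for their respective plot sets. The key point is that these two final topologies coincide, which is the content of \cite[Lemma 2.12]{origin}: the final topology for the Chen plots of $A$ agrees with the final topology for its associated open-parametrization diffeology. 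Granting that lemma, the lower triangle commutes, and the proof is complete.

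The only step that is not formal is the identification $\widetilde{\mathfrak{S}_\mathfrak{o} A} = \widetilde{A}$, but since it has already been recorded as Lemma 2.12 of the previous paper, the main task here is really just to cite the right ingredients and observe how the two triangles fit together; there is no substantive obstacle.
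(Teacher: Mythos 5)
Your proposal is correct and follows the paper's proof essentially verbatim: the five Quillen equivalences come from Theorems \ref{Quillenequiv}, \ref{analogue}, and \ref{ChD}, the upper triangle from the identity $\mathfrak{S}_\mathfrak{o}|K|_{\ccal h} = |K|_\dcal$ established in the proof of Theorem \ref{ChD} via Lemma \ref{deltapreserve} and colimit preservation, and the lower triangle from $\widetilde{\mathfrak{S}_\mathfrak{o}A} = \widetilde{A}$ as recorded in \cite[Lemma 2.12]{origin}. The extra remarks on the equivalent right-adjoint formulations are a harmless elaboration, not a different route.
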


\begin{rem}\label{WCh}
	Define the subclass $\wcal_{\ccal h}$ of $\ccal h$ by 
	$$
	\wcal_{\ccal h} = \{ A \in \ccal h \mid A \text{ has the } \ccal h \text{-homotopy type of a cofibrant object} \}
	$$
	and consider the pair of Quillen equivalence
	$$
	\tilde{\cdot} : \ccal h \rightleftarrows \ccal^0 : R.
	$$
	Then, the results analogous to those in Section 5 hold for $\wcal_{\ccal h}$.
	\par\indent
	Consider the pair of Quillen equivalences
	$$
	\mathfrak{S}_\mathfrak{o} : \ccal h \rightleftarrows \dcal : \mathfrak{Ch}^\sharp .
	$$
	Noticing that $\mathfrak{S}_\mathfrak{o}$ preserves products and unit intervals (Lemma \ref{deltapreserve}), we see that if $A \in \wcal_{\ccal h}$, then $\mathfrak{S}_\mathfrak{o}A$ is in $\wcal_\dcal$.
	
\end{rem}


\end{document}